\theoremstyle{plain}
\newtheorem{thm}{Theorem}[section]
\newtheorem{cor}[thm]{Corollary}
\newtheorem{lem}[thm]{Lemma}
\newtheorem{prop}[thm]{Proposition}
\theoremstyle{definition}
\newtheorem{defn}[thm]{Definition}
\theoremstyle{remark}
\newtheorem{rem}[thm]{Remark}
\newtheorem{eg}[thm]{Example}
\newtheorem*{eg*}{Example}
\newcommand{\ZZ}{\mathbb{Z}}
\newcommand{\QQ}{\mathbb{Q}}
\newcommand{\ZP}{\mathbb{Z}_{({p})}}
\DeclareMathOperator{\sgn}{sgn}
\newcommand{\FF}{\mathbb{F}}
\DeclareMathOperator{\Hom}{Hom}
\DeclareMathOperator{\Tor}{Tor}
\DeclareMathOperator{\Tab}{\mathcal{T}}
\DeclareMathOperator{\STab}{Std}                          
\newcommand{\s}{\mathfrak{s}}
\renewcommand{\t}{\mathfrak{t}}
\DeclareMathOperator{\h}{\mathfrak{h}}
\newcommand{\IT}{\t}   
\newcommand{\sym}[1]{\mathfrak{S}_{#1}}
\newcommand{\phl}{\mathsf{h}}
\newcommand{\dd}{\mathsf{d}}
\newcommand{\Shape}{\mathrm{Shape}}    
\newcommand{\lcm}{\mathrm{lcm}}
\newcommand{\C}{\mathcal{C}}
\newcommand{\m}{\mathbf{m}}
\newcommand{\FM}[1]{\textcolor{red}{#1}}
\newcommand{\KM}[1]{\textcolor{violet}{#1}}
\newcommand{\add}[2]{{#1} \oslash {#2}}
\title[Jantzen filtration, Young symmetrizers and seminormal basis]{Jantzen filtration of Weyl modules,\\product of Young symmetrizers and\\denominator of Young's seminormal basis}
\author{Ming Fang}
\address[M. Fang]{HLM, HCMS, Academy of Mathematics and Systems Science, Chinese Academy of Sciences, Beijing, 100190 -and- School of Mathematical Sciences, University of Chinese Academy of Sciences, Beijing, 100049, People's Republic of China.}
\email{fming@amss.ac.cn}
\author{Kay Jin Lim}
\address[K. J. Lim]{Division of Mathematical Sciences, Nanyang Technological University, SPMS-04-01, 21 Nanyang Link, Singapore 637371.}
\email{limkj@ntu.edu.sg}
\author{Kai Meng Tan}
\address[K. M. Tan]{Department of Mathematics, National University of Singapore, Block S17, 10 Lower Kent Ridge Road, Singapore 119076.}
\email{tankm@nus.edu.sg}
\subjclass[2010]{20G05, 20C30}
\thanks{The first author is supported by NSFC (No.\ 11688101, 11471315 and 11321101), while the second and third authors are supported by Singapore MOE Tier 2 AcRF MOE2015-T2-2-003.}
\keywords{Jantzen filtration, Young symmetrizer, Young's seminormal basis}
\date{July 2020}
\begin{document}

\begin{abstract}
Let $G$ be a connected reductive algebraic group over an algebraically closed field of characteristic $p>0$, $\Delta(\lambda)$ denote the Weyl module of $G$ of highest weight $\lambda$ and  $\iota_{\lambda,\mu}:\Delta(\lambda+\mu)\to \Delta(\lambda)\otimes\Delta(\mu)$ be the canonical $G$-morphism. We study the split condition for  $\iota_{\lambda,\mu}$ over $\ZZ_{(p)}$, and apply this as an approach to compare the Jantzen filtrations of the Weyl modules $\Delta(\lambda)$ and $\Delta(\lambda+\mu)$. In the case when $G$ is of type $A$, we show that the split condition is closely related to the product of certain Young symmetrizers and, under some mild conditions, is further characterized by the denominator of a certain Young's seminormal basis vector. We obtain explicit formulas for the split condition in some cases.
%
%
%
%
\end{abstract}

\maketitle

\section{Introduction}

Let $G_{\ZZ}$ be a split connected reductive group defined over $\ZZ$, and $G$ denote the algebraic group over an algebraically closed field $\FF$ of prime characteristic $p$ obtained from $G_{\ZZ}$.  The Jantzen filtration of a Weyl module of $G$, introduced in \cite{Jantzen77}, enjoys a rich structure, which led to many remarkable results (see, for example, \cite{Andersen83,Andersen87,Jantzen03,LyleMathas10,Shan12}), giving us a more complete understanding of the representation theory of $G$.

Despite these advances, how these filtrations for different Weyl modules are related remains to this day a very difficult open problem.  Conjectures concerning this open problem for simply connected semisimple algebraic groups include Jantzen's conjecture, stated in \cite{Andersen83}, that relates the Jantzen filtrations of two Weyl modules with adjacent highest weights, and Xi's conjecture \cite[Conjecture H]{Xi96} which implies a relation between the Jantzen filtrations of $\Delta(\lambda)$ and that of $\Delta(\lambda+p(p-1)\rho)$, where $\lambda$ is a $p$-restricted dominant integral weight and $\rho$ is the half sum of all positive roots.

The main results in this paper are Theorems \ref{thm:split-Jantzen-filtration}, \ref{thm:split-condition-main} and \ref{thm:denominator}.
We first concern ourselves with the relationship between the Jantzen filtrations of the Weyl modules $\Delta(\lambda)$ and $\Delta(\lambda + \mu)$ for two dominant integral weights $\lambda$ and $\mu$.  Let $\mathbb{Z}_{(p)}$ be the ring $\mathbb{Z}$ localized at the prime ideal $(p)$, and write $\Delta_{\mathbb{Z}_{(p)}}(\lambda)^i$ for the $i$-th term in the Jantzen filtration of the Weyl module $\Delta_{\mathbb{Z}_{(p)}}(\lambda)$ over $\mathbb{Z}_{(p)}$, and similarly for the weights $\mu$ and $\lambda+\mu$.  Our first main result (Theorem \ref{thm:split-Jantzen-filtration}) states that if the canonical $G$-morphism $\iota_{\lambda,\mu} : \Delta(\lambda+\mu) \to \Delta(\lambda) \otimes \Delta(\mu)$ admits a splitting map defined over $\mathbb{Z}_{(p)}$, then $\Delta_{\mathbb{Z}_{(p)}}(\lambda)^i$ may be naturally embedded into $\Delta_{\mathbb{Z}_{(p)}}(\lambda+\mu)^i$ as $\ZZ_{(p)}$-modules for all $i$. In particular, the induced $\FF$-linear map from $\Delta(\lambda)$ to $\Delta(\lambda+\mu)$ respects the Jantzen filtrations, see Remark \ref{rem:Jantzen}. 

The split condition for $\iota_{\lambda,\mu}$ over $\ZZ_{(p)}$ appears to be of independent interest. Indeed, Andersen communicated to us some necessary conditions for this splitting over $\FF$ (hence over $\ZZ_{(p)}$) by considering the restriction to $\mathrm{SL}_2$; see Proposition \ref{prop:andersen-criterion}. In the case when $G$ is the general linear group, the split condition for $\lambda=(m)$ and $\mu=(n)$ is obtained by Donkin \cite[\S4.8(12) Proposition]{Donkin98} (see also Proposition \ref{prop:Donkin-criterion}) and has played a crucial role in the determination of the global dimensions of Schur algebras \cite{T97} (see also \cite[\S4.8]{Donkin98}).  We note also that for type $A$, the split condition for $\lambda$ an arbitrary partition and $\mu=(1)$ may be approached using the theory of translation functors developed in \cite{BK}.

Having shown that the splitting of $\iota_{\lambda,\mu}$ over $\ZP$ plays a significant role in the comparison of the Jantzen filtrations of $\Delta(\lambda)$ and $\Delta(\lambda+\mu)$, we next seek to determine some necessary and sufficient conditions for this splitting for the case of the general linear groups.  Our second main result (Theorem \ref{thm:split-condition-main}) gives such a condition in terms of $\theta_{\lambda,\mu}$, which is the greatest common divisor of the coefficients of the product of certain Young symmetrizers in the integral group ring of symmetric groups.

Young's seminormal basis (see \cite{Mathasbook99,Murphy92} and the references therein), a substantial ingredient nowadays in the representation theory of symmetric groups, is by definition only a $\mathbb{Q}$-basis of the group algebra of symmetric groups. The denominators of these basis elements are not known in general, but seem to control certain parts of the modular representation theory (\cite{Mathasbook99,RHansen10,RHansen13}). Our third main result (Theorem \ref{thm:denominator}) relates further the above split condition to the mysterious denominator of a certain Young's seminormal basis element. To be precise, assume that the Young diagram of $\lambda+\mu$ can be obtained by putting the Young diagrams of $\lambda$ and $\mu$ side by side; then the split condition can be characterized in terms of the denominator $\dd_{\add{\IT^{\lambda}}{\IT^{\mu}}}$ of $f_{\add{\IT^{\lambda}}{\IT^{\mu}}}$, an element of Young's seminormal basis of the dual Specht module $S^{\QQ}_{\lambda+\mu}$. This particular denominator is not well studied yet, to our knowledge, and we leave further investigation in our future work.

We conclude the paper with an explicit computation of $\theta_{\lambda,\mu}$ for two cases: $\lambda = (1^n)$ and $\mu = (m)$, and $\lambda = (k,\ell)$ and $\mu = (m)$. In particular, the determination of $\theta_{(k,\ell),(m)}$ gives us a split condition for $\iota_{(k,\ell),(m)}$ over $\ZP$ which generalizes the aforementioned result of Donkin.

The paper is organized as follows: in the following section, we recall the necessary background, fix some notations which shall be used throughout and prove some preliminary results.  In Section \ref{sec:main results}, we state and prove our main results, namely Theorems \ref{thm:split-Jantzen-filtration}, \ref{thm:split-condition-main} and \ref{thm:denominator}.  We then conclude in Section \ref{sec:examples} with the explicit computations of the products of Young symmetrizers that lead to the closed formulas for $\theta_{(1^n),(m)}$ and $\theta_{(k,\ell),(m)}$.

\section{Preliminaries} \label{sec:preliminaries}
In this section, we recall the background theory that we require, fix all relevant notations and prove some preliminary results.  
Throughout this article, $\FF$ denotes an algebraically closed field of prime characteristic $p$, and $\ZP$ denotes the ring of integers localized at the prime ideal $(p)$.  We identify $\ZP$ with the subring of $\QQ$ consisting of all rational numbers with denominators not divisible by $p$, and note that $\FF$ is a natural $\ZP$-module.

We remark that our results in fact hold even when $\FF$ is not algebraically closed, but we assume $\FF$ to be algebraically closed here for the ease of presentation, which avoids the discussion of group schemes when defining the Weyl modules.

\subsection{Weyl modules and Jantzen filtration} \label{subsec:Weyl-module}

Following \cite[II, Chapter 1]{Jantzen03}, for a split connected reductive group $G_{\ZZ}$ defined over $\ZZ$, let $\mathrm{Dist}(G_{\ZZ})$ be its distribution algebra, and let $T_{\ZZ}$ be a split maximal torus contained in a split Borel subgroup $B_{\ZZ}$ of $G_{\ZZ}$. For a commutative ring $\mathcal{O}$ with $1$, set $G_{\mathcal{O}} = (G_{\ZZ})_{\mathcal{O}}$, $T_{\mathcal{O}} = (T_{\ZZ})_{\mathcal{O}}$, and  $\mathrm{Dist}(G_{\mathcal{O}}) = \mathrm{Dist}(G_{\ZZ})\otimes_{\ZZ} {\mathcal{O}}$. By \cite[II, Section 1.12(2)]{Jantzen03}, $\mathrm{Dist}(G_{\mathcal{O}})$ admits a triangle decomposition
\[
\mathrm{Dist}(G_{\mathcal{O}}) \cong \mathrm{Dist}(G_{\mathcal{O}})^+\otimes_{\mathcal{O}} \mathrm{Dist}(G_{\mathcal{O}})^0 \otimes_{\mathcal{O}} \mathrm{Dist}(G_{\mathcal{O}})^- .
\]
Since rational $G_{\mathcal{O}}$-modules are identified with locally finite $\mathrm{Dist}(G_{\mathcal{O}})\textrm{-}T_{\mathcal{O}}$-modules---see \cite[II, Section 1.20]{Jantzen03}  for the technical notions and assumptions involved---we use this distribution algebra approach with some care about the action of $T_{\mathcal{O}}$ to describe these $G_{\mathcal{O}}$-modules.

For a dominant integral weight $\lambda$, let $\Delta_{\QQ}(\lambda)$ denote the finite-dimensional irreducible $G_{\QQ}$-module with highest weight $\lambda$.
Fix a nonzero highest weight vector $\eta_{\lambda} \in \Delta_{\QQ}(\lambda)$, and let $\Delta_{\ZZ}(\lambda) = \mathrm{Dist}(G_{\ZZ})\eta_{\lambda}$ and $\Delta_{\mathcal{O}}(\lambda) = \Delta_{\ZZ}(\lambda) \otimes_{\ZZ} \mathcal{O}$. Note that different choices of $\eta_\lambda$ result in the same $G_{\mathcal{O}}$-module $\Delta_{\mathcal{O}}(\lambda)$ (up to isomorphism).

Denote by $\tau$ the Cartan involution on $\mathrm{Dist}(G_{\mathcal{O}})$; thus $\tau$ fixes $\mathrm{Dist}(G_{\mathcal{O}})^0$ pointwise and interchanges $\mathrm{Dist}(G_{\mathcal{O}})^+$ and $\mathrm{Dist}(G_{\mathcal{O}})^-$. A symmetric bilinear form $[-,-]$ on a $\mathrm{Dist}(G_{\mathcal{O}})$-module $M$ is said to be {\em contravariant} if $[zu,v] = [u, \tau(z)v]$ for all $u,v \in M$ and $z \in \mathrm{Dist}(G_{\mathcal{O}})$.

It is well known that there exists a contravariant symmetric bilinear form $(-,-)$ on $\Delta_{\mathcal{O}}(\lambda)$ such that
$
(\eta_\lambda, \eta_\lambda) =1$ and   $(tu,v)= (u,tv)
$
for all $t\in T_{\mathcal{O}}$ and $u,v \in \Delta_{\mathcal{O}}(\lambda)$, which is non-degenerate when $\mathcal{O}$ is a field of characteristic $0$.  In addition, all other contravariant symmetric bilinear forms on $\Delta_{\mathcal{O}}(\lambda)$ are scalar multiples of $(-,-)$.

When $\mathcal{O} = \ZP$, we write $c_{\lambda}$ for $(-,-)$, and define
\[
   \Delta_{\ZZ_{(p)}}(\lambda)^i=
    \{x \in \Delta_{\ZP}(\lambda) \mid c_\lambda(x, y)\in p^i\ZZ_{(p)}, \,
    \forall y\in \Delta_{\ZZ_{(p)}}(\lambda)\}.
 \]
 Then
 $$
 \Delta_{\ZZ_{(p)}}(\lambda) \supseteq \Delta_{\ZZ_{(p)}}(\lambda)^1 \supseteq \Delta_{\ZZ_{(p)}}(\lambda)^2 \supseteq \dotsb
 $$
 is the {\em Jantzen filtration} of $\Delta_{\ZZ_{(p)}}(\lambda)$.  Writing $\Delta(\lambda)^i$ for the image of $\Delta_{\ZZ_{(p)}}(\lambda)^i$ in the Weyl module $\Delta(\lambda):= \Delta_{\FF}(\lambda)$, we have the corresponding Jantzen filtration
 $$
 \Delta(\lambda) \supseteq \Delta(\lambda)^1 \supseteq \Delta(\lambda)^2 \supseteq \dotsb
 $$
 of $\Delta(\lambda)$, see \cite[II, Chapter 8]{Jantzen03}.

\subsection{Symmetric groups}
Denote the group of bijections on a nonempty set $X$ by $\sym{X}$, and further write $\sym{n}$ for $\sym{\{1,\dotsc, n\}}$.  We view elements of such a group as functions, so that we compose these elements from right to left. When $Y$ is a nonempty subset of $X$, we view $\sym{Y}$ as a subgroup of $\sym{X}$ by identifying an element of $\sym{Y}$ with its extension that sends $x$ to $x$ for all $x \in X \setminus Y$.

As usual, $\ZZ^+$ denotes the set of all positive integers. Let $X \subseteq \ZZ^+$ and $k \in \ZZ^+$.  Define $X^{+k} := \{ x+k \mid x \in X \}$, and for any function $\sigma : X \to X$, write $\sigma^{+k} : X^{+k} \to X^{+k}$ for the function such that $\sigma^{+k}(x+k) = \sigma(x) + k$ for all $x \in X$.  Then $\sigma \mapsto \sigma^{+k}$ is a group isomorphism from $\sym{X}$ to $\sym{X^{+k}}$, and this extends further to give an isomorphism $\ZZ\sym{X} \to \ZZ\sym{X^{+k}}$.  If $R \subseteq \ZZ\sym{X}$, we write $R^{+k}$ for $\{ r^{+k} \mid r\in R \}$.  In particular, $\sym{X}^{+k} = \sym{X^{+k}}$.

For a subset $S$ of $\sym{n}$, define $\{S\}, [S] \in \ZZ\sym{n}$ by
$$
\{S \} := \sum_{\sigma \in S} \sigma, \qquad [S] := \sum_{\sigma \in S} \sgn(\sigma) \sigma,
$$
where $\sgn(\sigma) \in \{\pm 1 \}$ is the usual signature of $\sigma$. 

\subsection{Partitions and Young tableaux}
\label{subsec:combinatorics-Young-tableaux}
Let $n$ be a natural number. A partition $\lambda$ of $n$, denoted $\lambda \vdash n$, is a non-increasing sequence  $\lambda=(\lambda_1,\lambda_2,\ldots)$ of non-negative integers such that $\lambda_1+\lambda_2+\cdots = n$. The dominance order $\unlhd$ on all partitions of $n$ is given by: $\lambda \unlhd \mu $ if and only if $\lambda_1+ \cdots+\lambda_k\leq \mu_1+\cdots+\mu_k$ for all $k\in \ZZ^+$.

The Young diagram of $\lambda$ is defined to be the set $ [\lambda] = \{(a,b) \in (\mathbb{Z}^+)^2 \mid b\leq \lambda_a\};$ and we call its elements the {\em nodes} of $\lambda$. The conjugate of $\lambda$ is the partition $\lambda'$ with $\lambda'_i=|\{j\mid (j,i)\in [\lambda]\}|$ for all $i$. We depict $[\lambda]$ as an array of left-justified boxes in which the $i$-th row comprises exactly $\lambda_i$ boxes, with each box representing a node of $\lambda$.

A $\lambda$-tableau is a bijective map $\s: [\lambda]\to \{1,\ldots, n\}$, and $\lambda$ is said to be the shape of $\s$, denoted by $\Shape(\s)$.  We identify $\s$ with the pictorial depiction of the Young diagram $[\lambda]$ in which each box in $[\lambda]$ is filled with $\{1,2,\dotsc, n\}$ so that each integer appears exactly once.  For $1 \leq r \leq n$, the residue $\mathrm{res}_{\s}(r)$ is equal to $j-i$ if $\s(i,j) = r$.  Denote the set of all $\lambda$-tableaux by $\Tab(\lambda)$.

A $\lambda$-tableau $\s$ is said to be \emph{standard} if the entries in $\s$ are increasing along each row and down each column in the Young diagram. Let $\STab(\lambda)$ be the set of all standard $\lambda$-tableaux. Let $\s \in \STab(\lambda)$ and $1\leq r\leq n$.  Since $\s$ is standard, $\s^{-1}(\{1,\dotsc, r \})$ is the Young diagram of a partition, and we define the subtableau  $\s{\downarrow_r}$ of $\s$ to be the restriction of $\s$ to this subdomain.  Pictorially, $\s{\downarrow_r}$ consists precisely of those boxes in $[\lambda]$ which are filled with $1,\dotsc,r$ in $\s$. The dominance order $\unlhd$ on $\STab(\lambda)$ is given by {$\s\unlhd\ \t$ if and only if, for each $1\leq r\leq n$, we have \[\Shape(\s{\downarrow_r})\unlhd  \Shape(\t{\downarrow_r}).\]

Let $\IT^\lambda$ be the $\lambda$-tableau such that $\IT^{\lambda} (a,b)= \lambda_1+\cdots+\lambda_{a-1}+b$. Similarly, let
$\IT_\lambda$ be the $\lambda$-tableau such that $\IT_{\lambda}(a,b) = a+\lambda'_1+\cdots+\lambda'_{b-1}$. For example,
$$
\IT^{(2,2,1)} =
\raisebox{3mm}{
\ytableausetup{boxsize=1em, mathmode}
\begin{ytableau}
\scriptstyle 1 & \scriptstyle 2   \\
\scriptstyle 3 & \scriptstyle 4  \\
\scriptstyle 5
\end{ytableau}}\ , \qquad
\IT_{(2,2,1)} =
\raisebox{3mm}{
\ytableausetup{mathmode}
\begin{ytableau}
\scriptstyle 1 & \scriptstyle 4  \\
\scriptstyle 2 & \scriptstyle 5  \\
\scriptstyle 3
\end{ytableau}}\ .
$$
It is well known that, with respect to $\unlhd$,  $\IT^\lambda$ and $\IT_\lambda$ are the largest and smallest respectively in $\STab(\lambda)$.  

Post-composition of $\lambda$-tableaux by elements of $\sym{n}$ gives a well-defined, faithful and transitive left action of $\sym{n}$ on $\Tab(\lambda)$, i.e.\ $\sigma \cdot \s = \sigma \circ \s$ for $\sigma \in \sym{n}$ and $\s\in \Tab(\lambda)$.  For a $\lambda$-tableau $\s$, let $d(\s)$ be the element in $\sym{n}$ such that $\s= d(\s) \cdot \IT^\lambda$, or equivalently $d(\s) =  \s \circ (\IT^{\lambda})^{-1}$. Furthermore, we write
$$\sigma_{\lambda} := d(\IT_{\lambda}).$$
We denote by $R_{\s}$ and $C_{\s}$ the row and column stabilizers of $\s$, respectively.  The associated Young symmetrizer $Y_{\s} \in \ZZ\sym{n}$ is defined as \[Y_{\s} := \{ R_{\s} \} [C_{\s}].\] It is well known that $Y_{\s}^2 = \phl^{\lambda}Y_{\s}$, where $\phl^{\lambda} := \frac{n!}{|\STab(\lambda)|} \in \ZZ^+$, and that if $\t = \sigma \cdot \s$, where $\sigma \in \sym{n}$, then $R_{\t} = \sigma R_{\s} \sigma^{-1}$ and $C_{\t} = \sigma C_{\s} \sigma^{-1}$, and so $Y_{\t} = \sigma Y_{\s} \sigma^{-1}$.

For a $\lambda$-tableau $\s$ and $k \in \ZZ^+$, define $\s^{+k} : [\lambda] \to \ZZ^+$ by $\s^{+k}(i,j) = \s(i,j) + k$ for $(i,j) \in [\lambda]$.  We view $\s^{+k}$ as a filling of the boxes in $[\lambda]$ by the numbers $k+1,\dotsc, k+n$.  We may thus speak of row and column stabilizers of $\s^{+k}$ too, which are subgroups of $\sym{n}^{+k}$.  Note that $R_{\s}^{+k} = R_{\s^{+k}}$ and $C_{\s}^{+k} = C_{\s^{+k}}$.

Let $\lambda \vdash n$ and $\mu \vdash m$, and let $\s \in \Tab(\lambda)$ and $\t \in \Tab(\mu)$.  We have $\lambda + \mu = (\lambda_1 + \mu_1, \lambda_2 + \mu_2,\dotsc) \vdash n+m$, and we now define a $(\lambda+\mu)$-tableau $\add{\s}{\t}$, which has the properties that $C_{\add{\s}{\t}} = C_{\s}C_{\t^{+n}}$ and $R_{\add{\s}{\t}} \supseteq R_{\s}R_{\t^{+n}}$.  To obtain $\add{\s}{\t}$, we insert the columns of $\t^{+n}$ into $\s$ successively, starting from the leftmost column and working towards the rightmost, such that a column of $\t^{+n}$ is inserted between two adjacent columns, the left of which is at least as long as the column to be inserted,  while the right of which is strictly shorter.  We illustrate this with the following example:

\begin{eg}
Let
 \begin{gather*}
 \ytableausetup{boxsize=1em,textmode,centertableaux,notabloids}
 \s=
 \begin{ytableau}
 1 & 3 & 6\\
 2 & 4 \\
 5
 \end{ytableau}, \qquad
 \t=
  \begin{ytableau}
   1 & 2 & 4 & 6 \\
   3 & 7 & 8\\
   5
 \end{ytableau}.
 \end{gather*}
 Then
 $$
 \t^{+6}=
 \begin{ytableau}
 7 & 8 & 10 & 12\\
 9 & 13 & 14 \\
 11
 \end{ytableau}, \qquad \text{and} \qquad
 \add{\s}{\t}=
 \begin{ytableau}
 1 & *(yellow)7 & 3 & *(yellow)8 & *(yellow)10 & 6 & *(yellow)12\\
 2 & *(yellow)9 & 4 & *(yellow)13 & *(yellow)14 \\
 5 & *(yellow)11
 \end{ytableau}.
$$
Thus
\begin{align*}
R_{\add{\s}{\t}} &= \sym{\{1,3,6,7,8,10,12\}} \sym{\{2,4,9,13,14\}}\sym{\{5,11\}}; \\
C_{\add{\s}{\t}} &=\sym{\{1,2,5\}}\sym{\{3,4\}}\sym{\{7,9,11\}}\sym{\{8,13\}}\sym{\{10,14\}}.
\end{align*}
\end{eg}

\subsection{Dual Specht modules}  \label{subsec:Specht-and-Young-basis}
Let $\lambda$ be a partition of $n$. We briefly review the construction of the permutation module $M^\lambda_{\ZZ}$ using $\lambda$-tabloids \cite[Chapter 7]{Fulton97}.  Two $\lambda$-tableaux $\s$ and $\t$ are \emph{row equivalent} if $\t = \sigma \cdot \s$ for some $\sigma\in R_{\s}$, and a $\lambda$-tabloid is a row equivalence class of $\lambda$-tableaux, which we usually write as $\{\t\}$ and depict, for example, as follows:
 \[
 \ytableausetup{boxsize=1em,tabloids}
 \ytableaushort{1654,72,3} =
 \ytableaushort{1456,27,3}
 \]
The left action of $\sym{n}$ on $\Tab(\lambda)$ induces an action on the set of $\lambda$-tabloids, i.e.\ $\sigma \cdot \{ \t \} = \{ \sigma \cdot \t\}$ for $\sigma \in \sym{n}$ and $\t \in \Tab(\lambda)$, and $M_{\ZZ}^{\lambda}$ is the associated permutation representation of this action over $\ZZ$. The {\em integral dual Specht module} $S^{\ZZ}_\lambda$ is then defined to be the quotient of
 $M^\lambda_{\ZZ}$ by the \emph{Garnir relations} \cite[\S7.4 Exercise 14]{Fulton97}: let $\{\t\}$ be a $\lambda$-tabloid, and $X$ be any $k$ elements in its $(i+1)$-th row of $\t$, then
 \[
 \{\t\}\equiv (-1)^k \sum \{\s\}
 \]
where the sum runs over all $\lambda$-tableaux $\s$ obtained from $\t$ by interchanging $X$ with $k$ elements in the $i$-th row, maintaining the orders of the two sets. (Readers are cautioned to the misprint of sign in \cite[\S7.4 Exercise 14]{Fulton97}.) 

For each $\s \in \Tab(\lambda)$, let $e_{\s}$ denote the image of $\{\s\}$ in $S_\lambda^{\ZZ}$ under the quotient map.  Since the action of $\sym{n}$ is transitive on $\Tab(\lambda)$, it is also transitive on $\{ e_{\s} \mid \s \in \Tab(\lambda) \}$, so that $S^{\ZZ}_{\lambda}$ is a cyclic $\ZZ\sym{n}$-module, generated by any $e_{\s}$.  It is well known that $\{ e_{\s} \mid \s \in \STab(\lambda) \}$ is a $\ZZ$-basis for $S^{\ZZ}_{\lambda}$,  called the \emph{standard basis}.
Furthermore, the $\ZZ\sym{n}$-morphism $\varphi_{\s}^{\ZZ}$ defined by $e_{\s} \mapsto Y_{\s}$ gives an isomorphism $S^{\ZZ}_{\lambda} \cong \ZZ\sym{n}Y_{\s}$.

Given a commutative ring $\mathcal{O}$ with $1$, define $S^\mathcal{O}_{\lambda} := \mathcal{O} \otimes_{\ZZ} S_{\lambda}^{\ZZ}$.  The above statements about $S^{\ZZ}_{\lambda}$ behave well under base change, so that analogous statements hold when $\ZZ$ is replaced by $\mathcal{O}$. In particular, for each $\s \in \STab(\lambda)$, we have an $\mathcal{O}\sym{n}$-isomorphism $\varphi_{\s}^{\mathcal{O}} : S^{\mathcal{O}}_{\lambda} \to \mathcal{O}\sym{n}Y_{\s}$ sending $e_{\s} \to Y_{\s}$.  Thus, from the following commutative diagram
$$
\begin{CD}
S^{\mathcal{O}}_{\lambda} @=  \mathcal{O} \otimes_{\ZZ} S^{\ZZ}_{\lambda} @> \mathrm{id}_{\mathcal{O}} \otimes \varphi_{\s}^{\ZZ} > \cong> \mathcal{O} \otimes_{\ZZ} \ZZ\sym{n}Y_{\s} \\
@V\varphi_{\s}^{\mathcal{O}}V \cong V     @.  @VV\mathrm{id}_{\mathcal{O}} \otimes \mathrm{inc}V \\
\mathcal{O} \sym{n} Y_\s @>\hookrightarrow>> \mathcal{O} \sym{n} @>\cong>> \mathcal{O} \otimes_{\ZZ} \ZZ\sym{n}
\end{CD}
$$
we conclude that the right vertical map $\mathrm{id}_{\mathcal{O}} \otimes \mathrm{inc} : \mathcal{O} \otimes_{\ZZ} \ZZ\sym{n}Y_{\s} \to \mathcal{O} \otimes_{\ZZ} \ZZ\sym{n}$, where $\mathrm{inc}$ is the inclusion map, is injective.  Hence, by considering the long exact sequence induced by the Tor functor on the short exact sequence $0 \to \ZZ{\sym{n}}Y_{\s} \to \ZZ\sym{n} \to \ZZ\sym{n}/\ZZ\sym{n}Y_{\s} \to 0$, we get $\Tor^{\ZZ}_1(\mathcal{O},\ZZ\sym{n}/\ZZ\sym{n}Y_{\s}) = 0$. This is true for all commutative ring $\mathcal{O}$ with $1$, so that in particular $\ZZ\sym{n}/\ZZ\sym{n}Y_{\s}$ is torsion-free, and hence free, as a $\ZZ$-module.  Consequently, $\ZZ\sym{n}Y_{\s}$ is a $\ZZ$-summand of $\ZZ\sym{n}$.

The set $\{ S^{\QQ}_{\lambda} \mid \lambda \vdash n \}$ is a complete set of pairwise non-isomorphic irreducible $\QQ\sym{n}$-modules.  In particular, the dimension of $S_{\lambda}^{\QQ}$, $|\STab(\lambda)|$, divides $n!$, the order of $\sym{n}$, so that indeed we have $\phl^{\lambda} = \frac{n!}{|\STab(\lambda)|} \in \ZZ^+$, as claimed in the Subsection \ref{subsec:combinatorics-Young-tableaux}.


\subsection{Young's seminormal basis}

Following \cite[Section 4]{Murphy92}, but considering the left $\sym{n}$-action (where composition of elements of $\sym{n}$ are from right to left) instead and taking the classical limit $q\rightarrow 1$,  we have the following constructions and facts.

For $2\leq k\leq n$, define the $k$-th Jucys-Murphy element $L_k:=(1,k)+\cdots+(k-1,k)\in \ZZ\sym{n}$, and let $R(k)$ be the set $\{i\in \ZZ\mid -k<i<k\}$ if $k\geq 4$, and
 $\{i\in \ZZ\mid -k<i<k, i\neq 0\}$ otherwise. For each $\lambda \vdash n$ and $\s \in \STab(\lambda)$, define, as the Jucys-Murphy elements pairwise commute,
 \[
 E_{\s} := \prod_{k=2}^n \prod_{\mbox{  }m\in R(k)\backslash \{\mathrm{res}_{\s}(k)\}} \dfrac{(L_k-m)}{\mathrm{res}_{\s}(k)-m} \in \QQ\sym{n}.
 \]
Then $\{ E_{\s} \mid \s \in \STab(\lambda),\ \lambda \vdash n \}$ is a complete set of pairwise orthogonal idempotents of $\QQ\sym{n}$ \cite[p.505, last paragraph]{Murphy92}.
 	
\begin{defn}
Let $\lambda \vdash n$.
\begin{enumerate}
  \item  For each $\t\in\STab(\lambda)$, define \[\gamma_{\t} := \prod_{(i,j) \in [\lambda]} \; \prod_{(k,\ell) \in \Gamma_{\t}(i,j)} \frac{(j-i) - (\ell-k) + 1}{(j-i)-(\ell-k)},\] where, for each $(i,j)\in[\lambda]$, \[\Gamma_{\t}(i,j) = \{ (k,\ell) \in [\lambda] \mid \ell<j,\ \t(k,\ell) < \t(i,j),\ \t(k',\ell) > \t(i,j)\	\forall k'>k \}\]
      (see \cite[Section 6]{Murphy92}).
  \item  For any $\s,\t\in\STab(\lambda)$, define $$f_{\s,\t}:= E_{\s}\, d(\s) \{ R_{\IT^{\lambda}} \} d(\t)^{-1} E_{\t} \in \QQ\sym{n},$$ and $f_{\s} := \gamma_{\IT^{\lambda'}} f_{\s,\IT_{\lambda}}$.
\end{enumerate}
\end{defn}

From now on, for each $1\leq i < n$, denote the basic transposition $(i,i+1) \in \sym{n}$ by $s_i$. 

 \begin{thm} \label{thm:basics-on-young-basis} \hfill
 \begin{enumerate}
   \item For $\lambda \vdash n$, we have $\gamma_{\IT^{\lambda'}}\gamma_{\IT_{\lambda}} = \phl^{\lambda}\, (= \frac{n!}{|\STab(\lambda)|})$.
   \item The group algebra $\QQ\sym{n}$ has a $\QQ$-basis $\{f_{\s,\t}\mid \lambda\vdash n,\
   \s,\t\in \STab(\lambda)\}$, called the {\em Young's seminormal basis}.
   \item For $\lambda,\mu\vdash n$, $\s,\t\in \STab(\lambda)$ and
   $\mathfrak{u},\mathfrak{v}\in \STab(\mu)$, we have 
   $f_{\s,\t}f_{\mathfrak{u},\mathfrak{v}} = \delta_{\t\mathfrak{u}} \gamma_{\t} f_{\s,\mathfrak{v}}$, where $\delta_{\t\mathfrak{u}}$ is the Kronecker delta.
   \item 
   For $\lambda \vdash n$ and $\s,\t \in \STab(\lambda)$, we have
   $$
   s_i f_{\s,\t} =
   \begin{cases}
   f_{\s,\t}, &\text{if $s_i\in R_{\s}$}; \\
   -f_{\s,\t}, &\text{if $s_i\in C_{\s}$}; \\
   a_i f_{\s,\t} + f_{s_i \cdot \s, \t}, &\text{if $s_i \cdot \s \in \STab(\lambda)$ and $\s \rhd\, s_i \cdot \s$}; \\
      a_i f_{\s,\t} + (1-a_i^2) f_{s_i \cdot \s, \t}, &\text{if $s_i \cdot \s \in \STab(\lambda)$ and $\s \lhd\, s_i \cdot \s$},
   \end{cases}
   $$
   where $a_i = (\mathrm{res}_{\s}(i+1) - \mathrm{res}_{\s}(i))^{-1}$.

    \item For $\lambda\vdash n$, recall that $\sigma_{\lambda} = d(\IT_{\lambda})$.  We have 
   \[
   Y_{\IT^{\lambda}} = f_{\IT^\lambda}\sigma_\lambda,
   \qquad \text{and} \qquad Y_{\IT_{\lambda}} = \sigma_\lambda f_{\IT^\lambda}.
   \]

   \item We have that $\{f_{\s}\mid \s\in \STab(\lambda)\}$
   is a $\QQ$-basis of $\QQ\sym{n}f_{\IT^{\lambda}} = \QQ\sym{n}Y_{\IT^{\lambda}} \sigma_{\lambda}^{-1}$, which is another  realisation of the dual Specht module $S^{\QQ}_\lambda$.  This basis is known as the {\em Young's seminormal basis of $S^{\QQ}_{\lambda}$}.
 \end{enumerate}
 \end{thm}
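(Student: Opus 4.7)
Every item is the left-action, $q\to 1$ specialisation of a statement in \cite[Sections 3--6]{Murphy92}, and my plan is to derive them in the order (3), (2), (4), (5), (1), (6), feeding each step into the next. I would open the proof with a short translation paragraph that fixes, once and for all, the dictionary between Murphy's right-action conventions and our left-action ones (composition read right-to-left, $\sigma\cdot\s=\sigma\circ\s$, etc.), so that the subsequent scalar bookkeeping can cite Murphy verbatim.

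For (3), the two essential inputs are the orthogonality $E_\s E_\t=\delta_{\s\t}E_\s$ of the Jucys--Murphy idempotents (already noted after the definition of $E_\s$) and the single identity
\[
\{R_{\IT^\lambda}\}\, d(\t)^{-1}E_\t d(\t)\{R_{\IT^\lambda}\}=\gamma_\t\,\{R_{\IT^\lambda}\},
\]
which extracts $\gamma_\t$ by a direct expansion of $E_\t$ in terms of the $L_i$; substituting both into $f_{\s,\t}f_{\mathfrak{u},\mathfrak{v}}$ collapses it to $\delta_{\t\mathfrak{u}}\gamma_\t f_{\s,\mathfrak{v}}$. Once (3) is known, (2) is the standard two-sided-idempotent argument: sandwiching a purported $\QQ$-linear relation between $f_{\mathfrak{u},\mathfrak{u}}$ on the left and $f_{\mathfrak{v},\mathfrak{v}}$ on the right isolates the coefficient $c_{\mathfrak{u},\mathfrak{v}}\gamma_\mathfrak{u}\gamma_\mathfrak{v}\ne 0$, and $\sum_{\lambda\vdash n}|\STab(\lambda)|^2=n!$ supplies the dimension count. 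Claim (4) is a case-by-case direct computation of $s_i f_{\s,\t}$ from the defining product for $E_\s$, using only the commutator $[s_i,L_{i+1}]$ and the residue gap $\mathrm{res}_\s(i+1)-\mathrm{res}_\s(i)=a_i^{-1}$; the four cases correspond to $s_i\in R_\s$, $s_i\in C_\s$, and the two directions in the dominance order.

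For (5) I would unfold the definition of $f_{\IT^\lambda}=\gamma_{\IT^{\lambda'}} f_{\IT^\lambda,\IT_\lambda}$, recall that $d(\IT^\lambda)=1$ and $d(\IT_\lambda)=\sigma_\lambda$, and invoke Murphy's absorption identities $\{R_{\IT^\lambda}\}E_{\IT^\lambda}=\{R_{\IT^\lambda}\}$ and the dual $E_{\IT_\lambda}[C_{\IT_\lambda}]=[C_{\IT_\lambda}]$ (up to the scalars $\gamma_{\IT^{\lambda'}}$ and $\gamma_{\IT_\lambda}$); after chasing the scalars the right side collapses to $\{R_{\IT^\lambda}\}[C_{\IT^\lambda}]=Y_{\IT^\lambda}$, giving $f_{\IT^\lambda}\sigma_\lambda=Y_{\IT^\lambda}$, and the companion identity $Y_{\IT_\lambda}=\sigma_\lambda f_{\IT^\lambda}$ then follows from $Y_{\IT_\lambda}=\sigma_\lambda Y_{\IT^\lambda}\sigma_\lambda^{-1}$ (since $\IT_\lambda=\sigma_\lambda\cdot\IT^\lambda$). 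For (1) I would use (3) and (5) in tandem: $Y_{\IT^\lambda}^2=\phl^\lambda Y_{\IT^\lambda}$ translates via (5) into an identity among $f$-elements whose coefficient, computed via (3), is exactly $\gamma_{\IT^{\lambda'}}\gamma_{\IT_\lambda}$. Finally, (6) is immediate: (5) gives $\QQ\sym{n}f_{\IT^\lambda}=\QQ\sym{n}Y_{\IT^\lambda}\sigma_\lambda^{-1}$, and since each $f_\s$ is a nonzero scalar multiple of $f_{\s,\IT_\lambda}$, the fact that $\{f_\s\}_{\s\in\STab(\lambda)}$ is a $\QQ$-basis follows from (2) together with $\dim S^\QQ_\lambda=|\STab(\lambda)|$.

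The main technical obstacle will be the scalar bookkeeping in (1), (3), and (5): Murphy's formulas are written for a right action, so every $d(\s)$, every $\sigma_\lambda$, and every $\gamma$-scalar needs to be re-examined under the switch of conventions, and the signs in (4) are particularly sensitive to this translation. I would therefore carry out the opening translation paragraph with care, and then cite Murphy's internal computations rather than re-deriving them.
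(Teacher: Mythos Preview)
Your overall strategy---translate from \cite{Murphy92} and feed each part into the next---is essentially the paper's strategy as well, and your derivation of (1) from (3) and (5) via $Y_{\IT^\lambda}^2=\phl^\lambda Y_{\IT^\lambda}$ is a pleasant alternative to the paper's direct citation. But there is a genuine gap in your plan for (5): the ``dual absorption'' identity $E_{\IT_\lambda}[C_{\IT_\lambda}]=[C_{\IT_\lambda}]$ is simply false. Already for $\lambda=(2)$ one has $[C_{\IT_\lambda}]=1$ while $E_{\IT_\lambda}=\tfrac{1+s_1}{2}$, and no scalar fixes this. What is true (and what you actually need) is the identity
\[
\gamma_{\IT^{\lambda'}}\,E_{\IT_\lambda}\sigma_\lambda\{R_{\IT^\lambda}\}=[C_{\IT_\lambda}]\sigma_\lambda\{R_{\IT^\lambda}\},
\]
which is precisely the equation the paper lifts from \cite[p.~511]{Murphy92}. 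The paper then applies the anti-automorphism $\sigma\mapsto\sigma^{-1}$ (under which every $E_\s$ is fixed, so $f_{\s,\t}\mapsto f_{\t,\s}$) to turn this into $f_{\IT^\lambda}=\{R_{\IT^\lambda}\}\sigma_\lambda^{-1}[C_{\IT_\lambda}]=Y_{\IT^\lambda}\sigma_\lambda^{-1}$. Your route can be repaired by the same device, but not by the absorption identity you wrote down.

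One further caution for (4): the paper observes that Murphy's Theorem~6.4 is misstated (the sign convention for $h$ in his Lemma~6.2 is off), and records the corrected formula before specialising to $q=1$. If you intend to ``cite Murphy's internal computations rather than re-deriving them'', you will run into this error; either re-derive the four cases directly from the commutator $s_iL_{i+1}-L_is_i=1$ as you suggest, or cite with the correction. The paper also inserts a short argument that $E_\s x_{\s\t}E_\t=\zeta_{\s\t}$ (using \cite[Theorem 4.5]{Murphy92} and \cite[3.20]{Mathasbook99}) so that Murphy's results about the $\zeta$-basis transfer wholesale to the $f$-basis; you may find this cleaner than verifying each identity from the definition $f_{\s,\t}=E_\s d(\s)\{R_{\IT^\lambda}\}d(\t)^{-1}E_\t$ separately.
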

\begin{proof} Part (1) follows from the penultimate displayed equation on \cite[p.507]{Murphy92}.

For parts (2)--(4), we first prove that $E_{\s}x_{\s\t}E_{\t}$ and $\zeta_{\s\t}(= E_{\s} \xi_{\s\t} E_{\t})$ in \cite{Murphy92} are equal (we refer the reader to \cite{Murphy92} for the definitions of $x_{\s\t}$ and $\xi_{\s\t}$). By \cite[Theorem 4.5]{Murphy92}, we see that $\xi_{\s\t} = x_{\s\t} + h_{\s\t}$ where \[
h_{\s\t}\in \check{\mathcal{H}}^\lambda:=\mathrm{span}\{ x_{\mathfrak{u} \mathfrak{v}} \mid \mathfrak{u},\mathfrak{v} \in \STab(\mu),\ \mu \vdash n,\ \mu \rhd \lambda \}.\]
By \cite[3.20]{Mathasbook99} (note that $m_{\s\t}$ in \cite{Mathasbook99} is $x_{\s\t}$ in \cite{Murphy92}), $\{ \xi_{\s\t} \mid \s,\t \in \STab(\lambda),\ \lambda \vdash n \}$ is another basis for $\mathcal{H}$, and that, in fact,
\[\mathrm{span}\{ \xi_{\mathfrak{u} \mathfrak{v}} \mid \mathfrak{u},\mathfrak{v} \in \STab(\mu),\ \mu \vdash n,\ \mu \rhd \lambda \}=\check{\mathcal{H}}^\lambda\ni h_{\s\t}.\]
Now, $\xi_{\mathfrak{u}\mathfrak{v}} E_{\t} =0$ for all $ \mathfrak{u},\mathfrak{v} \in \STab(\mu)$ for some $\mu \vdash n$ with $\mu \rhd \lambda$ by \cite[(5.5)]{Murphy92}.
Thus, $\xi_{\s\t} E_{\t} = (x_{\s\t} + h_{\s\t})E_{\t} = x_{\s\t}E_{\t}$, and so $\zeta_{\s\t} = E_{\s}x_{\s\t}E_{\t}$.

Note that \cite{Murphy92} uses the convention of composing the elements of the symmetric group from left to right, opposite of that used here, so that our symmetric group algebra is the opposite ring of the Hecke algebra of \cite{Murphy92} at the limit $q \to 1$.  As such, our $f_{\t,\s}$ is precisely $E_{\s}x_{\s\t}E_{\t}=\zeta_{\s,\t}$ at the limit $q=1$, and thus parts (2) and (3) follow from p.505 to p.506, last displayed equation on p.510 of \cite{Murphy92} respectively. Part (4) also follows from \cite[Theorem 6.4]{Murphy92} in the same way, once the incorrect formula given there is corrected. The correct formula should be:
$$
\zeta_{us} T_v = \begin{cases}
-\frac{1}{[-h]_q} \zeta_{us}, &\text{if } |h| = 1; \\
-\frac{1}{[h]_q} \zeta_{us} + \zeta_{ut}, & \text{if } h > 1; \\
-\frac{1}{[h]_q} \zeta_{us} + \frac{q[h+1]_q[h-1]_q}{[h]_q^2} \zeta_{ut}, &\text{if } h < -1.
\end{cases}
$$
This mistake is rendered by another error in Lemma 6.2 of \cite{Murphy92}, which the formula depends on (the author erroneously attributed to Lemma 6.1 instead), where for the first displayed equation to hold, one needs to define $h$ to be $a-b$ instead of $b-a$.  (There is another minor error, inconsequential to the proof of Theorem 6.4, in the second displayed equation in Lemma 6.2 too).  The readers are welcome to verify that the correct results are as claimed above.

For part (5), using the penultimate display equation on page 511 of \cite{Murphy92} to our context,
$$
[C_{\IT_{\lambda}}] \sigma_{\lambda} \{ R_{\IT^{\lambda}} \} =
\gamma_{\IT^{\lambda'}}f_{\IT_{\lambda},\IT^{\lambda}}.$$
Applying the anti-automorphism of $\QQ\sym{n}$ defined by $\sum_{\sigma\in \sym{n}} a_{\sigma} \sigma \mapsto \sum_{\sigma \in \sym{n}} a_{\sigma} \sigma^{-1}$ to the above equation, since $E_{\s}$ is invariant under the anti-automorphism, we get
$$
f_{\IT^{\lambda}}=\gamma_{\IT^{\lambda'}} f_{\IT^{\lambda},\IT_{\lambda}}=\{ R_{\IT^{\lambda}} \}\sigma_{\lambda}^{-1} [C_{\IT_{\lambda}}]=\{ R_{\IT^{\lambda}} \} [C_{\IT^{\lambda}}]\sigma_{\lambda}^{-1}=Y_{\IT^\lambda}\sigma_\lambda^{-1},
$$
and, similarly, $f_{\IT^{\lambda}}=\sigma_\lambda^{-1}\{R_{\IT_{\lambda}}\}[C_{\IT_{\lambda}}]=\sigma_\lambda^{-1}Y_{\IT_{\lambda}}$.

Part (6) follows immediately from parts (2), (3) and (5).	
\end{proof}

\begin{rem}
Any scalar multiple of the $f_{\s}$'s would of course also give a $\QQ$-basis of $\QQ\sym{n}Y_{\IT^\lambda} \sigma_{\lambda}^{-1}$.  We choose the scaling for the $f_{\s}$'s so that $f_{\IT^{\lambda}} = Y_{\IT^{\lambda}} \sigma_{\lambda}^{-1}$ (cf. Theorem \ref{thm:basics-on-young-basis}(5)).
\end{rem}

The following are the results about Young's seminormal basis that we require in this paper:

\begin{prop}	\label{prop:products-of-young-basis-of-different-size}
Let $\lambda \vdash n$ and $\mu \vdash m$, and let $\s,\t \in \STab(\lambda)$ and $\mathfrak{u}, \mathfrak{v} \in \STab(\mu)$.
\begin{enumerate}
\item If $m\leq n$, then $f_{\mathfrak{u},\mathfrak{v}}f_{\s,\t} = 0$ unless $\s {\downarrow_m} = \mathfrak{v}$, in which case,
$$f_{\mathfrak{u},\mathfrak{v}}f_{\s,\t} = \gamma_{\mathfrak{v}} f_{\s',\t},$$ where $\s' \in \STab(\lambda)$ is obtained from $\s$ by replacing its subtableau $\mathfrak{s}{\downarrow_m}$ by $\mathfrak{u}$.
\item Suppose that the last column of $[\lambda]$ is no shorter than the first column of $[\mu]$, and let $\sigma \in \sym{m}$.  If $\sigma f_{\mathfrak{u},\mathfrak{v}} =\sum_{\mathfrak{w} \in \STab(\mu)} a_{\mathfrak{w}}f_{\mathfrak{w},\mathfrak{v}}$  ($a_{\mathfrak{w}}\in \QQ$), then
\[
	\sigma^{+n}f_{\add{\s}{\mathfrak{u}},\mathfrak{q}}=\sum a_{\mathfrak{w}} f_{\add{\s}{\mathfrak{w}},\mathfrak{q}}
\]
for any $\mathfrak{q} \in \STab(\lambda+\mu)$.
\item We have
\[
	d(\s) f_{\IT^{\lambda}, \t} = f_{\s,\t} + \sum_{\substack{\mathfrak{r} \in \STab(\lambda) \\ \mathfrak{r} \rhd \s}} a_{\mathfrak{r}} f_{\mathfrak{r}, \t}  \qquad ( a_{\mathfrak{r}} \in \QQ).
\]
\item Let $\phi : S^{\ZZ}_{\lambda} \to \QQ\sym{n}f_{\IT^{\lambda}}$ be the $\QQ\sym{n}$-isomorphism obtained by post-composing the $\QQ\sym{n}$-isomorphism $S^{\ZZ}_{\lambda} \to \QQ\sym{n}Y_{\IT^{\lambda}}$ that sends $e_{\IT^{\lambda}}$ to $Y_{\IT^{\lambda}}$, with right multiplication by $\sigma_{\lambda}^{-1}$.  Then $\phi(e_{\IT^{\lambda}}) = f_{\IT^{\lambda}}$, and
\[
	f_{\s} = \phi(e_{\s}) + \sum_{\substack{\mathfrak{r} \in \STab(\lambda) \\ \mathfrak{r} \rhd \s}} b_{\mathfrak{r}} \phi(e_{\mathfrak{r}}) \qquad ( b_{\mathfrak{r}} \in \QQ).
\]
\end{enumerate}
\end{prop}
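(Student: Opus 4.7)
The plan is to establish the four parts in sequence, exploiting the defining formula $f_{\s,\t} = E_{\s}\, d(\s) \{R_{\IT^\lambda}\} d(\t)^{-1} E_{\t}$ together with the structural properties recorded in Theorem~\ref{thm:basics-on-young-basis}. For part (1), the key observation is the factorisation $E_{\s} = E_{\s \downarrow_m} \cdot E^{>m}_{\s}$, where $E^{>m}_{\s} := \prod_{k=m+1}^{n} \prod_{m' \in R(k) \setminus \{\mathrm{res}_{\s}(k)\}} (L_k - m')/(\mathrm{res}_{\s}(k) - m')$. This decomposition is valid because $R(k)$ depends only on $k$ and $\mathrm{res}_{\s}(r) = \mathrm{res}_{\s \downarrow_m}(r)$ for $r \le m$; moreover, a direct computation gives $\sigma L_k = L_k \sigma$ for all $\sigma \in \sym{m}$ and $k > m$, so $E^{>m}_{\s}$ lies in the centraliser of $\QQ\sym{m}$ inside $\QQ\sym{n}$. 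Combined with the pairwise orthogonality of the primitive idempotents $\{E_{\mathfrak{w}} : \mathfrak{w} \in \STab(\mu'),\ \mu' \vdash m\}$ in $\QQ\sym{m}$, this yields $E_{\mathfrak{v}} E_{\s} = \delta_{\mathfrak{v}, \s \downarrow_m} E_{\s}$, whence $E_\mathfrak{v} f_{\s,\t} = \delta_{\mathfrak{v}, \s\downarrow_m} f_{\s,\t}$ and thus $f_{\mathfrak{u},\mathfrak{v}} f_{\s,\t} = 0$ unless $\mathfrak{v} = \s \downarrow_m$. When $\mathfrak{v} = \s \downarrow_m$, expanding $f_{\mathfrak{u},\mathfrak{v}} f_{\s,\t}$ in the seminormal basis of $\QQ\sym{n}$ and comparing how both sides transform under right multiplication by $E_{\t}$ (which forces the right index to be $\t$), left multiplication by $E_{\mathfrak{p}}$ for $\mathfrak{p} \in \STab(\nu),\ \nu \vdash m$ (which forces the left index's $m$-restriction to be $\mathfrak{u}$), and by $L_k$ for $k > m$ (which forces residues above $m$ to agree with those of $\s$) pins the expansion to $f_{\mathfrak{u},\mathfrak{v}} f_{\s,\t} = c\, f_{\s', \t}$ for some scalar $c \in \QQ$. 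The scalar $c = \gamma_{\mathfrak{v}}$ is identified using the identity $f_{\mathfrak{v},\mathfrak{v}} = \gamma_{\mathfrak{v}} E_{\mathfrak{v}}$ (both act as the unique rank-one projection onto the $\mathfrak{v}$-weight space of $S^{\QQ}_{\mu}$ and vanish in every other Wedderburn block of $\QQ\sym{m}$) in the special case $\mathfrak{u} = \mathfrak{v}$, and then carried to general $\mathfrak{u}$ via the product identity $f_{\mathfrak{v},\mathfrak{u}} f_{\mathfrak{u},\mathfrak{v}} = \gamma_{\mathfrak{u}} f_{\mathfrak{v},\mathfrak{v}}$ from Theorem~\ref{thm:basics-on-young-basis}(3).

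For part (2), the column-length hypothesis forces every column of $\mathfrak{u}^{+n}$ to be no longer than the last column of $\s$, so the $\oslash$-insertion rule places all columns of $\mathfrak{u}^{+n}$ to the right of those of $\s$. Hence $\add{\s}{\mathfrak{u}}$ is just the juxtaposition of $\s$ with $\mathfrak{u}^{+n}$, which is standard, and the position of $i+n$ in $\add{\s}{\mathfrak{u}}$ differs from that of $i$ in $\mathfrak{u}$ by the uniform column shift $\lambda_1$. Consequently the residue differences $\mathrm{res}(i+1) - \mathrm{res}(i)$ agree between the two tableaux, so the scalars $a_i$ in Theorem~\ref{thm:basics-on-young-basis}(4) match; the row/column stabiliser conditions for $s_i$ on $\mathfrak{u}$ translate to those for $s_{i}^{+n}$ on $\add{\s}{\mathfrak{u}}$; one has $s_{i}^{+n} \cdot (\add{\s}{\mathfrak{u}}) = \add{\s}{(s_i \cdot \mathfrak{u})}$; and the dominance order is preserved because the shapes of the restrictions $(\add{\s}{\mathfrak{u}})\downarrow_k$ and $(\add{\s}{(s_i \cdot \mathfrak{u})})\downarrow_k$ coincide for $k \le n$ and are obtained by adding $\lambda$ to the shapes of $\mathfrak{u}\downarrow_{k-n}$ and $(s_i \cdot \mathfrak{u})\downarrow_{k-n}$ for $k > n$. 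Induction on $\ell(\sigma)$ applying Theorem~\ref{thm:basics-on-young-basis}(4) in parallel at both levels then yields the claim. For part (3), I would induct on $\ell(d(\s))$: the base case $\s = \IT^\lambda$ is trivial, and for the inductive step choose a reduced factorisation $d(\s) = s_i \cdot d(\s'')$ with $\s = s_i \cdot \s''$ and $\s'' \rhd \s$ (obtained from a cover $\s''$ of $\s$ in the dominance order, which always differs from $\s$ by an adjacent transposition). By induction $d(\s'') f_{\IT^\lambda, \t} = f_{\s'', \t} + \sum_{\mathfrak{r} \rhd \s''} c_{\mathfrak{r}} f_{\mathfrak{r}, \t}$; applying $s_i$ and using Theorem~\ref{thm:basics-on-young-basis}(4) term-by-term produces $f_{\s, \t}$ with coefficient $1$ (from the third case applied to $f_{\s'', \t}$) together with scalar multiples of $f_{\mathfrak{r}, \t}$ and $f_{s_i \cdot \mathfrak{r}, \t}$ for $\mathfrak{r} \rhd \s''$. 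A dominance argument (using that if $\mathfrak{r} \rhd \s''$ and $s_i \cdot \mathfrak{r}$ is standard, then $s_i \cdot \mathfrak{r} \rhd \s$, as one verifies by splitting into the subcases $s_i \cdot \mathfrak{r} \rhd \mathfrak{r}$ and $s_i \cdot \mathfrak{r} \lhd \mathfrak{r}$) confirms that all surviving indices strictly dominate $\s$.

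Part (4) follows by combining part (3) with the $\QQ\sym{n}$-equivariance of $\phi$. Since $\phi(e_{\IT^\lambda}) = Y_{\IT^\lambda}\sigma_\lambda^{-1} = f_{\IT^\lambda}$ by Theorem~\ref{thm:basics-on-young-basis}(5) and $e_{\s} = d(\s) \cdot e_{\IT^\lambda}$, we obtain $\phi(e_{\s}) = d(\s) f_{\IT^\lambda} = \gamma_{\IT^{\lambda'}} d(\s) f_{\IT^\lambda, \IT_\lambda}$. Applying part (3) with $\t = \IT_\lambda$ and scaling through by $\gamma_{\IT^{\lambda'}}$ yields $\phi(e_{\s}) = f_{\s} + \sum_{\mathfrak{r} \rhd \s} a_{\mathfrak{r}} f_{\mathfrak{r}}$. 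This is a unitriangular system with respect to the dominance order on $\STab(\lambda)$, which can be inverted by descending induction from the maximum $\IT^\lambda$ (where $f_{\IT^\lambda} = \phi(e_{\IT^\lambda})$) to obtain the desired expression $f_\s = \phi(e_\s) + \sum_{\mathfrak{r} \rhd \s} b_\mathfrak{r} \phi(e_\mathfrak{r})$. The main obstacle throughout is part (1), in particular the scalar identification: while orthogonality and Gelfand--Tsetlin-type arguments easily confine the product to the one-dimensional subspace $\QQ f_{\s', \t}$, pinning down the precise coefficient $\gamma_{\mathfrak{v}}$ requires the more delicate Wedderburn-type identification $f_{\mathfrak{v},\mathfrak{v}} = \gamma_{\mathfrak{v}} E_{\mathfrak{v}}$ in $\QQ\sym{m}$, which depends on recognising both elements as the unique rank-one projection onto a common one-dimensional weight space.
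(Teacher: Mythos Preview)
Your treatments of parts (2)--(4) are correct and essentially coincide with the paper's. For part~(3) the paper phrases the dominance step via subexpressions of reduced words (invoking \cite[Theorem 3.8]{Mathasbook99}) rather than your case split on $s_i\cdot\mathfrak{r}\rhd\mathfrak{r}$ versus $s_i\cdot\mathfrak{r}\lhd\mathfrak{r}$, but the two routes are equivalent modulo the lifting property of Bruhat order, which you should cite explicitly for the second subcase.

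For part~(1) your approach diverges from the paper's, and the scalar identification has a genuine gap. Your weight-space argument correctly pins $f_{\mathfrak{u},\mathfrak{v}} f_{\s,\t}$ to $c\, f_{\s',\t}$ for some scalar $c$, and the identity $f_{\mathfrak{v},\mathfrak{v}} = \gamma_{\mathfrak{v}} E_{\mathfrak{v}}$ does give $c = \gamma_{\mathfrak{v}}$ when $\mathfrak{u} = \mathfrak{v}$. But the product identity $f_{\mathfrak{v},\mathfrak{u}} f_{\mathfrak{u},\mathfrak{v}} = \gamma_{\mathfrak{u}} f_{\mathfrak{v},\mathfrak{v}}$ that you invoke to ``carry'' this to general $\mathfrak{u}$ only yields, upon applying both sides to $f_{\s,\t}$, the relation $c\,c' = \gamma_{\mathfrak{u}}\gamma_{\mathfrak{v}}$, where $c'$ is the analogous scalar for $f_{\mathfrak{v},\mathfrak{u}} f_{\s',\t}$. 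This is symmetric in the unknowns and does not determine $c$ on its own. The paper circumvents this by observing, directly from Theorem~\ref{thm:basics-on-young-basis}(4), that for every $\sigma\in\sym{m}$ the coefficients expressing $\sigma f_{\mathfrak{v},\mathfrak{w}}$ in the seminormal basis depend only on the residues $\mathrm{res}_{\mathfrak{v}}(i)$ for $i\le m$; since these coincide with $\mathrm{res}_{\s}(i)$ for $i\le m$, the \emph{same} coefficients govern $\sigma f_{\s,\t}$ within $\QQ\sym{n}$ (with $\mathfrak{w}\mapsto \mathfrak{w}\sqcup\mathfrak{p}$ on the left index). Extending $\QQ$-linearly to $f_{\mathfrak{u},\mathfrak{v}}\in\QQ\sym{m}$ and reading off the known identity $f_{\mathfrak{u},\mathfrak{v}} f_{\mathfrak{v},\mathfrak{w}} = \gamma_{\mathfrak{v}} f_{\mathfrak{u},\mathfrak{w}}$ then transfers the scalar immediately. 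This is exactly the mechanism you already deploy in part~(2); the fix is to note that it applies here as well, with $\mathfrak{v}$ sitting inside $\s$ as the honest subtableau $\s{\downarrow_m}$ (so the residues match on the nose, no column shift needed) rather than being juxtaposed.
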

	
\begin{proof} \hfill
\begin{enumerate}
\item If $\mathfrak{v} \ne \s{\downarrow_m}$, then
		$E_{\mathfrak{v}}E_{\s \downarrow_{m}} = 0$ since these are distinct orthogonal idempotents.  Consequently $E_{\mathfrak{v}}E_{\s}= 0$, and thus
		$f_{\mathfrak{u},\mathfrak{v}}f_{\s,\t}=0$.
		
		If $\mathfrak{v} = \s{\downarrow_m}$, then $[\mu]$ is a subdiagram
		of $[\lambda]$.
		Let $\mathfrak{p}$ be the skew $\lambda/\mu$-tableau obtained by
		removing $\mathfrak{v}$ from $\s$. For a $\mu$-tableau $\mathfrak{w}$,
		let $\mathfrak{w}\sqcup\mathfrak{p}$
		be the $\lambda$-tableau obtained by patching $\mathfrak{p}$ and
		$\mathfrak{w}$ together (thus $\s=
		\mathfrak{v}\sqcup\mathfrak{p}$ and $\s'=\mathfrak{u}\sqcup\mathfrak{p}$).
		Since $\mathfrak{p}$ takes values
		in $\{m+1,\ldots, n\}$, it follows that
		$\mathfrak{w}\sqcup\mathfrak{p}$ is standard whenever
		$\mathfrak{w}$ is standard.
		By Theorem \ref{thm:basics-on-young-basis}(4), if $\sigma\in \sym{m}$,
		and $\mathfrak{w} \in \STab(\mu)$ such that
		if $\sigma f_{\mathfrak{v},\mathfrak{w}}
		=\sum_{\mathfrak{v}_i\in\STab(\mu)} a_i f_{\mathfrak{v}_i,\mathfrak{w}}$ for $a_i\in \QQ$, then the coefficients $a_i$ are independent of
		the choice of $\mathfrak{w}$, and moreover
		$
		\sigma f_{\mathfrak{v}\sqcup\mathfrak{p},\mathfrak{q}} = \sum_{\mathfrak{v}_i\in\STab(\mu)} a_i f_{\mathfrak{v_i}\sqcup\mathfrak{p},\mathfrak{q}}
		$ for any $\mathfrak{q}\in \STab(\lambda)$.  Since this is true for all $\sigma \in \sym{m}$, it remains true when we replace $\sigma$ by any element of $\QQ\sym{m}$.
		Since $f_{\mathfrak{u},\mathfrak{v}}\in \QQ\sym{m}$,
		and $f_{\mathfrak{u},\mathfrak{v}}f_{\mathfrak{v},\mathfrak{w}}=
		\gamma_{\mathfrak{v}}f_{\mathfrak{u},\mathfrak{w}}$ by Theorem \ref{thm:basics-on-young-basis}(3), it follows that
		\[f_{\mathfrak{u},\mathfrak{v}}f_{\s,\t} =f_{\mathfrak{u},\mathfrak{v}}f_{\mathfrak{v}\sqcup\mathfrak{p},\t} = \gamma_{\mathfrak{v}}
		f_{\mathfrak{u}\sqcup\mathfrak{p},\t}=\gamma_{\mathfrak{v}} f_{\s',\t}\] as desired.
		
		\item The conditions on $[\lambda]$ and $[\mu]$ imply that 	
	 $\add{\s}{\mathfrak{w}} \in \STab(\lambda+\mu)$ for all
		$\mathfrak{w} \in \STab(\mu)$, and that
		$\mathrm{res}_{\add{\s}{\mathfrak{u}}}(n+j)
		=\mathrm{res}_{\mathfrak{u}}(j) +\lambda_1$ for all $1\leq j \leq m$, and thus
		\[
		\mathrm{res}_{\add{\s}{\mathfrak{u}}}(n+i+1)-
		\mathrm{res}_{\add{\s}{\mathfrak{u}}}(n+i)
		=\mathrm{res}_{\mathfrak{u}}(i+1)-\mathrm{res}_{\mathfrak{u}}(i)
		\]
		for any $1 \leq i \leq m-1$.  It is enough to prove part (2) for the case when $\sigma=s_i$ for
		some $1\leq i\leq m-1$, and since $s_i^{+n}=s_{i+n}$, this follows directly from Theorem \ref{thm:basics-on-young-basis}(4).

		\item 
The statement is trivial for $\s = \IT^{\lambda}$, and so let $\s \ne \IT^{\lambda}$.  Then, since $\s$ is standard, there exists some $i$ such that $i+1$ lies on a higher row of $\s$ than $i$.  Let $\s' = s_i \cdot \s$.  Then $\s' \in \STab(\lambda)$, and $\s' \rhd \s$ \cite[Lemma 3.7]{Mathasbook99}, so that premultiplying any reduced expression of $d(\s')$ by $s_i$ will yield a reduced expression for $d(\s)$. By induction, we have
		$$
		d(\s) f_{\IT^{\lambda}, \t} = s_i d(\s') f_{\IT^{\lambda}, \t} =
		s_i(f_{\s',\t} + \sum_{\substack{\mathfrak{r} \in \STab(\lambda) \\ \mathfrak{r} \rhd \s'}} a_{\mathfrak{r}} f_{\mathfrak{r}, \t}) \qquad (a_{\mathfrak{r}}\in\QQ)
			$$
        If $s_i \cdot \mathfrak{r} \in \STab(\lambda)$ for some $\mathfrak{r}\in\STab(\lambda)$ with $\mathfrak{r}\rhd \s'$, then, since $d(\mathfrak{r})\rhd d(\s')$ (\cite[Theorem 3.8]{Mathasbook99}, we have that $d(\mathfrak{r})$ has a reduced expression which is a proper subexpression of a reduced expression of $d(\s')$, so that $s_id(\mathfrak{r})$ has a reduced expression which is a proper subexpression of a reduced expression of $s_id(\s')=d(\s)$, or equivalently, $s_i\cdot\mathfrak{r}\rhd \s$.  The proof is now complete by applying Theorem \ref{thm:basics-on-young-basis}(4).

			
		\item We have $\phi(e_{\IT^{\lambda}}) = Y_{\IT^{\lambda}} \sigma_{\lambda}^{-1} = f_{\IT^{\lambda}}$ by Theorem \ref{thm:basics-on-young-basis}(5).  Furthermore, by part (3),
		$$
		\phi(e_{\s}) = \phi(d(\s) e_{\IT^{\lambda}}) = d(\s) f_{\IT^{\lambda}} = f_{\s} + \sum_{\mathfrak{r} \rhd \s} a_{\mathfrak{r}} f_{\mathfrak{r}}.$$
		Thus, $f_{\s} = \phi(e_{\s}) - \sum_{\mathfrak{r} \rhd \s} a_{\mathfrak{r}} f_{\mathfrak{r}}$, and the desired result follows by induction.
		\end{enumerate}
\end{proof}

\subsection{Gcds}
 \begin{defn}
 Let $\mathcal{L}$ be a free $\ZZ$-module of finite
 rank. We define, for each non-zero $z\in \mathcal{L}\otimes_{\ZZ}\QQ$,
 a positive rational number $\mathrm{gcd}_{\mathcal{L}}(z)$ as follows:
 \[
\mathrm{gcd}_{\mathcal{L}}(z) := \sup\{\kappa\in \QQ\mid z/\kappa \in \mathcal{L}\} \in \QQ \cup \{\infty\}.
 \]
 \end{defn}

We gather together some elementary properties resulting from this definition.

\begin{lem} \label{lem:basic-gcds}
	Let $\mathcal{L}$ be a free $\ZZ$-module of finite
	rank, and let $z\in \mathcal{L} \otimes_{\ZZ} \QQ$.
\begin{enumerate}
\item If $\{v_1,\dotsc, v_n \}$ is a $\ZZ$-basis for $\mathcal{L}$, so that it is a $\mathbb{Q}$-basis for $\mathcal{L} \otimes_{\ZZ} \QQ$, and $z = \sum_{i=1}^n \frac{a_i}{b_i} v_i \ne 0$ where $\gcd(a_i,b_i)=1$ for each $1\leq i\leq n$, then
 $\gcd_{\mathcal{L}}(z)$ is the (positive) greatest common divisor of the integers $a_1, \dotsc, a_n$ divided by the (positive) least common multiple of the integers $b_1,\dotsc, b_n$.

 In particular, $\gcd_{\mathcal{L}}(z) = \max\{\kappa\in \QQ\mid z/\kappa \in \mathcal{L}\}$.

\item If $a \in \QQ_{>0}$, then $\gcd_{\mathcal{L}}(az) = a\gcd_{\mathcal{L}}(z)$.

\item We have $z \in \mathcal{L} \otimes_{\ZZ} \ZP$ if and only if $\gcd_{\mathcal{L}}(z) \in \ZP$.

\item Let $\mathcal{K}$ be a direct $\ZZ$-summand of $\mathcal{L}$, and suppose that $z\in \mathcal{K}\otimes_{\ZZ} \QQ$.
 Then $\mathrm{gcd}_{\mathcal{K}}(z)=\mathrm{gcd}_{\mathcal{L}}(z)$.
 \end{enumerate}
 \end{lem}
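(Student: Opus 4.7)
The heart of the lemma is part (1); once it is established, parts (2)--(4) fall out easily. For (1), I would fix the $\ZZ$-basis $\{v_1,\dotsc,v_n\}$, write an arbitrary positive rational $\kappa = p/q$ in lowest terms (so $\gcd(p,q)=1$ with $p,q \in \ZZ^+$), and observe that $z/\kappa = \sum_i \frac{a_i q}{b_i p} v_i$ lies in $\mathcal{L}$ if and only if $b_i p \mid a_i q$ for every $i$. Using $\gcd(a_i,b_i)=1$ this forces $b_i \mid q$, and using $\gcd(p,q)=1$ this forces $p \mid a_i$. Hence $z/\kappa \in \mathcal{L}$ is equivalent to $p \mid \gcd(a_1,\dotsc,a_n)$ and $\mathrm{lcm}(b_1,\dotsc,b_n) \mid q$, so the largest admissible $\kappa$ is exactly $\gcd(a_i)/\mathrm{lcm}(b_i)$, and the supremum in the definition is attained.

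For (2), the substitution $\kappa \mapsto a\kappa$ gives a bijection between $\{\kappa : z/\kappa \in \mathcal{L}\}$ and $\{a\kappa : (az)/(a\kappa) \in \mathcal{L}\}$, from which $\gcd_{\mathcal{L}}(az) = a\gcd_{\mathcal{L}}(z)$ is immediate. For (3), write $\gcd_{\mathcal{L}}(z) = c/d$ with $c = \gcd(a_i)$ and $d = \mathrm{lcm}(b_i)$ as in (1); the key observation is that this fraction is already in lowest terms, since if a prime $\ell$ divides $c$, then $\ell \mid a_i$ for every $i$, whence $\ell \nmid b_i$ (as $\gcd(a_i,b_i)=1$) and therefore $\ell \nmid d$. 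Thus $\gcd_{\mathcal{L}}(z) \in \ZP$ if and only if $p \nmid d$ if and only if $p \nmid b_i$ for every $i$, which is precisely the condition that $z \in \mathcal{L} \otimes_{\ZZ} \ZP$.

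For (4), since $\mathcal{K}$ is a direct $\ZZ$-summand of $\mathcal{L}$, I can extend any $\ZZ$-basis $\{u_1,\dotsc,u_k\}$ of $\mathcal{K}$ to a $\ZZ$-basis $\{u_1,\dotsc,u_k,u_{k+1},\dotsc,u_n\}$ of $\mathcal{L}$. The hypothesis $z \in \mathcal{K} \otimes_{\ZZ} \QQ$ means the last $n-k$ coefficients of $z$ in this extended basis vanish, so the formula of part (1) applied in $\mathcal{L}$ gives the same numerator and denominator as the formula applied in $\mathcal{K}$, yielding $\gcd_{\mathcal{K}}(z) = \gcd_{\mathcal{L}}(z)$.

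No step here is genuinely difficult; the only subtle point is the coprimality of the numerator and denominator in part (3), which must be verified before one can simply read off divisibility by $p$ from the reduced form. Everything else is bookkeeping with the definition.
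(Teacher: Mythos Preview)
Your argument is correct and aligns with the paper's approach: the paper also derives (1) ``directly from the definition'' and says (2) and (3) ``follow immediately from part (1)'', so your more detailed verification of the divisibility conditions and of the coprimality of $\gcd(a_i)$ and $\mathrm{lcm}(b_i)$ simply spells out what the paper leaves implicit. The one small methodological difference is in part (4): the paper argues straight from the supremum definition, writing $\mathcal{L}=\mathcal{K}\oplus\mathcal{K}'$ and noting that $(z,0)/\kappa\in\mathcal{L}$ if and only if $z/\kappa\in\mathcal{K}$, whereas you instead extend a $\ZZ$-basis of $\mathcal{K}$ to one of $\mathcal{L}$ and invoke the formula from (1). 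Both routes are equally short and yield the same conclusion.
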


 \begin{proof}
 Part (1) follows directly from the definition of $\gcd_{\mathcal{L}}$, while parts (2) and (3) follow immediately from part (1).

 For part (4), since $\mathcal{K}$ is a direct summand of $\mathcal{L}$, it follows that $\mathcal{L}=\mathcal{K}\oplus \mathcal{K}'$ for some submodule $\mathcal{K}'$ of $\mathcal{L}$. With respect to this decomposition, $z$ regarded as an element of $\mathcal{L}\otimes_{\ZZ}\QQ$ is realized as $(z,0)$. Thus
 for any $\kappa\in \QQ$, $(z,0)/\kappa = (z/\kappa , 0)$, and in particular $\mathrm{gcd}_{\mathcal{K}}(z) = \mathrm{gcd}_{\mathcal{L}}(z)$.
 \end{proof}

\begin{rem} \hfill
\begin{enumerate}
	\item One can give an alternative definition of $\gcd_{\mathcal{L}}$ using Lemma \ref{lem:basic-gcds}(1), but our definition makes it clear that $\gcd_{\mathcal{L}}$ is independent of the basis chosen for $\mathcal{L}$.
	\item The condition that $\mathcal{K}$ is a direct summand, instead of merely a submodule, in Lemma
 \ref{lem:basic-gcds}(4) is necessary. For example, $2\ZZ$ is a submodule, but not a direct summand of $\ZZ$, and for
 $1\in 2\ZZ\otimes_{\ZZ}\QQ\cong \QQ$, we have $\mathrm{gcd}_{2\ZZ}(1) = 1/2$,
 while $\mathrm{gcd}_{\ZZ}(1) = 1$.
 \item It is clear from Lemma \ref{lem:basic-gcds}(1) that $\gcd_{\mathcal{L}}(z)$ generalizes the greatest common divisor of the coefficients of $z$ (which makes sense when the latter are integers).  
\end{enumerate}
\end{rem}

Recall Young's seminormal basis $\{ f_{\s} \mid \s \in \STab(\lambda) \}$ for the dual Specht module $S^{\QQ}_{\lambda}$ defined in Theorem \ref{thm:basics-on-young-basis}(6).

\begin{lem} \label{lem:denominator-young-basis}
Let $\lambda \vdash n$ and $\s \in \STab(\lambda)$.  Then
 $\gcd_{\ZZ\sym{n}}(f_{\s}) = \frac{1}{\dd_{\s}}$ for some $\dd_{\s} \in \ZZ^+$. We call $\dd_\s$ the {\em denominator} of $f_\s$.
\end{lem}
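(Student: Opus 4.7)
The plan is to combine Lemma \ref{lem:basic-gcds} with two structural facts: that $\ZZ\sym{n} f_{\IT^{\lambda}}$ sits as a direct $\ZZ$-summand inside $\ZZ\sym{n}$, and that the expansion of $f_\s$ in Proposition \ref{prop:products-of-young-basis-of-different-size}(4) has leading coefficient $1$.

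First I would verify that $\ZZ\sym{n} f_{\IT^{\lambda}} = \ZZ\sym{n} Y_{\IT^{\lambda}}\sigma_\lambda^{-1}$ is a direct $\ZZ$-summand of $\ZZ\sym{n}$. We already established in Subsection \ref{subsec:Specht-and-Young-basis} (via the Tor argument based on the isomorphisms $\varphi_\s^{\mathcal{O}}$) that $\ZZ\sym{n} Y_{\IT^{\lambda}}$ is such a summand. Right multiplication by $\sigma_\lambda^{-1}$ is a $\ZZ$-module automorphism of $\ZZ\sym{n}$ (it permutes the standard group basis) and carries $\ZZ\sym{n} Y_{\IT^{\lambda}}$ onto $\ZZ\sym{n} f_{\IT^{\lambda}}$ by Theorem \ref{thm:basics-on-young-basis}(5). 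Since $f_{\s}$ lies in $\QQ\sym{n} f_{\IT^{\lambda}}$ by Theorem \ref{thm:basics-on-young-basis}(6), Lemma \ref{lem:basic-gcds}(4) reduces the problem to computing $\gcd_{\ZZ\sym{n} f_{\IT^{\lambda}}}(f_{\s})$.

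Next I would fix a convenient $\ZZ$-basis for $\ZZ\sym{n} f_{\IT^{\lambda}}$. The map $\phi$ of Proposition \ref{prop:products-of-young-basis-of-different-size}(4) is literally the composition of the $\ZZ\sym{n}$-isomorphism $\varphi^\ZZ_{\IT^{\lambda}} : S^\ZZ_\lambda \to \ZZ\sym{n} Y_{\IT^{\lambda}}$ with right multiplication by $\sigma_\lambda^{-1}$, hence it restricts to a $\ZZ\sym{n}$-isomorphism $S^\ZZ_\lambda \to \ZZ\sym{n} f_{\IT^{\lambda}}$. As $\{e_\t : \t \in \STab(\lambda)\}$ is the standard $\ZZ$-basis of $S^\ZZ_\lambda$, the set $\{\phi(e_\t) : \t \in \STab(\lambda)\}$ is a $\ZZ$-basis of $\ZZ\sym{n} f_{\IT^{\lambda}}$. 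Proposition \ref{prop:products-of-young-basis-of-different-size}(4) then gives
\[
f_\s = \phi(e_\s) + \sum_{\substack{\mathfrak{r} \in \STab(\lambda) \\ \mathfrak{r} \rhd \s}} b_{\mathfrak{r}}\, \phi(e_{\mathfrak{r}}) \qquad (b_{\mathfrak{r}} \in \QQ).
\]
Because the coefficient of $\phi(e_\s)$ is $1$, the greatest common divisor of the numerators of the expansion coefficients of $f_\s$ (in lowest terms) is $1$. Lemma \ref{lem:basic-gcds}(1) thus yields $\gcd_{\ZZ\sym{n} f_{\IT^{\lambda}}}(f_\s) = 1/\dd_\s$, where $\dd_\s$ is the positive least common multiple of the denominators of $1$ and of the $b_{\mathfrak{r}}$'s; in particular $\dd_\s \in \ZZ^+$. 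Combining with the reduction from the first step gives $\gcd_{\ZZ\sym{n}}(f_\s) = 1/\dd_\s$, as required.

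There is no real obstacle here; the proof is essentially an assembly of facts already in hand. The one subtlety worth flagging is confirming that $\phi$ is honestly a $\ZZ\sym{n}$-isomorphism rather than only a $\QQ\sym{n}$-isomorphism after scalar extension, which is immediate once one writes it as the composition of the two integral isomorphisms above. The entire argument then rests on the triangularity statement in Proposition \ref{prop:products-of-young-basis-of-different-size}(4), which supplies the unit leading coefficient that forces the numerator of the gcd to equal $1$.
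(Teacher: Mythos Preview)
Your proof is correct and follows essentially the same route as the paper's own argument: reduce from $\ZZ\sym{n}$ to its direct summand $\ZZ\sym{n}f_{\IT^{\lambda}}$ via Lemma \ref{lem:basic-gcds}(4), then use the unit-triangular expansion from Proposition \ref{prop:products-of-young-basis-of-different-size}(4) together with Lemma \ref{lem:basic-gcds}(1). Your write-up is slightly more explicit about why $\phi$ is an integral isomorphism and why the summand property transfers under right multiplication by $\sigma_\lambda^{-1}$, but the substance is identical.
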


\begin{proof}
By Theorem \ref{thm:basics-on-young-basis}(5), $f_{\IT^\lambda}=Y_{\IT^\lambda}\sigma_\lambda^{-1}\in\ZZ\sym{n}$. Thus, since $\ZZ\sym{n}Y_{\IT^{\lambda}}$ is a $\ZZ$-summand of $\ZZ\sym{n}$, so is $\ZZ\sym{n}f_{\IT^{\lambda}}$.  Consequently $\gcd_{\ZZ\sym{n}}(f_{\s}) = \gcd_{\ZZ\sym{n}f_{\IT^{\lambda}}} (f_{\s})$ by Lemma \ref{lem:basic-gcds}(4). Using the $\QQ\sym{n}$-isomorphism $\phi$ in Proposition \ref{prop:products-of-young-basis-of-different-size}(4), we have $\phi(S^{\ZZ}_{\lambda}) = \ZZ\sym{n}f_{\IT^{\lambda}}$ and $\phi^{-1}(f_{\s}) = e_{\s} + \sum_{\mathfrak{r} \rhd \s} b_{\mathfrak{r}} e_{\mathfrak{r}}$ where $b_{\mathfrak{r}}\in\QQ$.  Thus,
$$
\gcd\nolimits_{\ZZ\sym{n}f_{\IT^{\lambda}}} (f_{\s}) = \gcd\nolimits_{S^{\ZZ}_{\lambda}} (\phi^{-1}(f_{\s})) = \frac{1}{\dd_{\s}},$$
for some $\dd_{\s} \in \ZZ^+$, by Lemma \ref{lem:basic-gcds}(1).
\end{proof}


\section{Main results} \label{sec:main results}

\subsection{Comparison of Jantzen filtrations}
Keep the notations in Subsection \ref{subsec:Weyl-module}. Let $G$ be a connected reductive algebraic group over $\FF$, obtained from a split connected reductive group $G_{\ZZ}$. Let $\lambda$ and $\mu$ be dominant integral weights.  The canonical $G$-morphism $$\iota_{\lambda,\mu} : \Delta(\lambda+\mu) \to \Delta(\lambda) \otimes \Delta(\mu)$$ is characterized by $\iota_{\lambda,\mu}(\eta_{\lambda+\mu}) = \eta_{\lambda} \otimes \eta_{\mu}$, where $\eta_{\nu}$ is the highest weight vector generating the Weyl module $\Delta(\nu)$ for any dominant integral weight $\nu$.  This $G$-morphism certainly depends on the choice of the highest weight vectors in the Weyl modules, but is unique up to scalars. It is injective, and sometimes admits a splitting $\psi_{\lambda,\mu}$ (i.e.\ a $G$-morphism satisfying $\psi_{\lambda,\mu} \circ \iota_{\lambda,\mu}= \mathrm{id}$).
We say that $\psi_{\lambda,\mu}$ is {\em defined over $\ZP$} if $\psi_{\lambda,\mu}$ is induced from a $G_{\ZP}$-morphism  $$\psi^{\ZP}_{\lambda,\mu} : \Delta_{\ZP} (\lambda) \otimes \Delta_{\ZP}(\mu) \to \Delta_{\ZP}(\lambda+\mu).$$
By abusing notation, we shall also write $\psi_{\lambda,\mu}$ for $\psi^{\ZP}_{\lambda,\mu}$ in what follows.


As our first main result which also serves as a motivation to the study of the split condition for $\iota_{\lambda,\mu}$, we have the following result that compares the Jantzen filtrations of $\Delta_{\ZP}(\lambda)$ and $\Delta_{\ZP}(\lambda+\mu)$.

\begin{thm} \label{thm:split-Jantzen-filtration}
Let $G$ be a connected reductive algebraic group over $\FF$, obtained from a split connected reductive group $G_{\ZZ}$. Let $\lambda$ and $\mu$ be dominant integral weights. If the canonical $G$-morphism $\iota_{\lambda,\mu}:\Delta(\lambda+\mu)\to \Delta(\lambda)\otimes \Delta(\mu)$ admits a splitting $\psi_{\lambda,\mu}$ defined over $\ZZ_{(p)}$, then there are injective $\ZZ_{(p)}$-linear maps
\[
\psi_{\lambda,\mu}\circ\iota_\lambda:\Delta_{\ZZ_{(p)}}(\lambda)^i \to \Delta_{\ZZ_{(p)}}(\lambda+\mu)^i
\]
for all $i\geq 0$, where $\iota_\lambda$ is the map $\Delta_{\ZZ_{(p)}}(\lambda) \to \Delta_{\ZZ_{(p)}}(\lambda)\otimes\Delta_{\ZZ_{(p)}}(\mu)$ given by $\iota_\lambda(x) = x\otimes \eta_\mu$, and $\eta_\mu$ is the highest weight vector in $\Delta_{\ZZ_{(p)}}(\mu)$.
\end{thm}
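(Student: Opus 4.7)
The plan is to exploit the contravariant bilinear forms and an adjointness between $\iota_{\lambda,\mu}$ and $\psi_{\lambda,\mu}$ that follows from them. Since the $(\lambda+\mu)$-weight space of $\Delta_{\QQ}(\lambda) \otimes \Delta_{\QQ}(\mu)$ is one-dimensional (spanned by $\eta_\lambda \otimes \eta_\mu$), the Cartan component $\Delta_{\QQ}(\lambda+\mu)$ appears in this tensor product with multiplicity one, so any $\QQ$-splitting of $\iota_{\lambda,\mu}$ is unique. On the other hand, the pullback of $c_\lambda \otimes c_\mu$ along $\iota_{\lambda,\mu}$ is a contravariant form on $\Delta_{\QQ}(\lambda+\mu)$ taking value $1$ at the highest weight vector, so it coincides with $c_{\lambda+\mu}$. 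Combining these, and comparing scalars on $\eta_\lambda \otimes \eta_\mu$, identifies $\psi_{\lambda,\mu}$ as the adjoint of $\iota_{\lambda,\mu}$, giving the key identity
\[
	c_{\lambda+\mu}(\psi_{\lambda,\mu}(w), z) = (c_\lambda \otimes c_\mu)(w, \iota_{\lambda,\mu}(z))
\]
for all $w \in \Delta_{\ZP}(\lambda) \otimes_{\ZP} \Delta_{\ZP}(\mu)$ and $z \in \Delta_{\ZP}(\lambda+\mu)$ (integrality passing back from $\QQ$ to $\ZP$ by torsion-freeness).

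Granted this identity, the inclusion $\phi(\Delta_{\ZP}(\lambda)^i) \subseteq \Delta_{\ZP}(\lambda+\mu)^i$, where $\phi := \psi_{\lambda,\mu}\circ\iota_\lambda$, is immediate. Given $x \in \Delta_{\ZP}(\lambda)^i$ and $z \in \Delta_{\ZP}(\lambda+\mu)$, write $\iota_{\lambda,\mu}(z) = \sum_j y_j \otimes w_j$ with $y_j \in \Delta_{\ZP}(\lambda)$ and $w_j \in \Delta_{\ZP}(\mu)$ (using that $\iota_{\lambda,\mu}$ is defined over $\ZZ$); then
\[
	c_{\lambda+\mu}(\phi(x), z) = \sum_j c_\lambda(x, y_j)\, c_\mu(\eta_\mu, w_j) \in p^i\ZP,
\]
as each $c_\lambda(x, y_j) \in p^i\ZP$ and each $c_\mu(\eta_\mu, w_j) \in \ZP$.

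For injectivity, by torsion-freeness of $\Delta_{\ZP}(\lambda)$ it suffices to prove $\phi$ injective over $\QQ$, which I reduce to showing that its adjoint $\phi^* = \iota_\lambda^* \circ \iota_{\lambda,\mu} : \Delta_{\QQ}(\lambda+\mu) \to \Delta_{\QQ}(\lambda)$ is surjective, where $\iota_\lambda^*(y \otimes w) = c_\mu(\eta_\mu, w)\,y$ is the obvious integral adjoint of $\iota_\lambda$. For $f \in \mathrm{Dist}(G_{\QQ})^-$, expanding $\iota_{\lambda,\mu}(f\cdot\eta_{\lambda+\mu}) = \sum f_{(1)}\eta_\lambda \otimes f_{(2)}\eta_\mu$ via the coproduct (in Sweedler notation), contravariance of $c_\mu$ together with $\eta_\mu$ being a highest weight vector forces $c_\mu(\eta_\mu, f_{(2)}\eta_\mu) = 0$ unless $f_{(2)}$ has weight zero in $\mathrm{Dist}(G_{\QQ})^-$, i.e., is a scalar; the counit axiom $(\mathrm{id}\otimes\epsilon)(\Delta(f)) = f$ then yields $\phi^*(f\eta_{\lambda+\mu}) = f\eta_\lambda$, and surjectivity follows from $\Delta_{\QQ}(\lambda) = \mathrm{Dist}(G_{\QQ})^- \cdot \eta_\lambda$.

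The main obstacle is pinning down the adjointness $\psi_{\lambda,\mu}^* = \iota_{\lambda,\mu}$ cleanly; once this is in place (via the multiplicity-one argument above, and propagated from $\QQ$ back to $\ZP$ by torsion-freeness), both the filtration preservation and the injectivity of $\phi$ follow from the formal manipulations with the contravariant forms outlined above.
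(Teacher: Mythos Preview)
Your proof is correct and follows essentially the same path as the paper's. The paper phrases the key step as orthogonality of $\mathrm{im}(\iota_{\lambda,\mu})$ and $\ker(\psi_{\lambda,\mu})$ with respect to $c_\lambda\otimes c_\mu$ (combined with the fact that the pullback of $c_\lambda\otimes c_\mu$ along $\iota_{\lambda,\mu}$ is $c_{\lambda+\mu}$), which is exactly equivalent to your adjointness identity $c_{\lambda+\mu}(\psi_{\lambda,\mu}(w),z)=(c_\lambda\otimes c_\mu)(w,\iota_{\lambda,\mu}(z))$; from there both arguments reduce to the same counit/coproduct computation (which the paper leaves implicit in its final equality $(c_\lambda\otimes c_\mu)(u\eta_\lambda\otimes\eta_\mu,\,u'(\eta_\lambda\otimes\eta_\mu))=c_\lambda(u\eta_\lambda,u'\eta_\lambda)$, and which you spell out as the surjectivity of $\phi^*$).
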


\begin{proof}
By assumption, $\psi_{\lambda,\mu}\circ\iota_{\lambda,\mu}=\mathrm{id}$, and so we have the decomposition $\Delta_{\ZZ_{(p)}}(\lambda)\otimes \Delta_{\ZZ_{(p)}}(\mu) \cong \Delta_{\ZZ_{(p)}}(\lambda+\mu)\oplus \mathrm{ker} (\psi_{\lambda,\mu})$ as $\mathrm{Dist}(G_{\ZP})$-modules. Recall the contravariant symmetric bilinear forms $c_\lambda, c_\mu$ and $c_{\lambda+\mu}$ introduced in Subsection \ref{subsec:Weyl-module}. The tensor product $c_\lambda\otimes c_\mu$ defines a symmetric bilinear form on $\Delta_{\ZP}(\lambda)\otimes \Delta_{\ZZ_{(p)}}(\mu)$. Since the Cartan involution $\tau$ commutes with the comultiplication $\Delta$ on $\mathrm{Dist}(G_{\ZP})$, i.e., $(\tau\otimes \tau)\Delta = \Delta \tau$, it follows that $c_\lambda\otimes c_\mu$ and its restriction to $\Delta_{\ZZ_{(p)}}(\lambda+\mu)$ are symmetric and contravariant. As a result, $c_\lambda\otimes c_\mu$ coincides with $c_{\lambda+\mu}$ on $\Delta_{\ZZ_{(p)}}(\lambda+\mu)$ as
$$(c_\lambda\otimes c_\mu)(\iota_{\lambda,\mu}(\eta_{\lambda+\mu}), \iota_{\lambda,\mu}(\eta_{\lambda+\mu}))=(c_\lambda\otimes c_\mu)(\eta_{\lambda}\otimes \eta_{\mu},\eta_{\lambda}\otimes \eta_\mu)=c_\lambda(\eta_\lambda,\eta_\lambda)c_\mu(\eta_\mu,\eta_\mu)=1.$$

We claim that $(c_\lambda\otimes c_\mu)(\iota_{\lambda,\mu}(u\eta_{\lambda+\mu}),w)=0$ for any $u\in \mathrm{Dist}(G_{\ZP})$ and $w\in \mathrm{ker}(\psi_{\lambda,\mu})$.
Indeed, $\lambda+\mu$ is the highest weight in both $\Delta_{\ZZ_{(p)}}(\lambda+\mu)$ and $\Delta_{\ZZ_{(p)}}(\lambda)\otimes \Delta_{\ZZ_{(p)}}(\mu)$, and the corresponding weight spaces are of rank one. It follows that all weights in $\mathrm{ker}(\psi_{\lambda,\mu})$ are strictly smaller than $\lambda+\mu$, and thus $(c_\lambda\otimes c_\mu)(\eta_\lambda\otimes \eta_\mu, \mathrm{ker}(\psi_{\lambda,\mu}))=0$. So, for any $u\in \mathrm{Dist}(G_{\ZP})$ and $w\in \mathrm{ker}(\psi_{\lambda,\mu})$, we have \[(c_\lambda\otimes c_\mu)(\iota_{\lambda,\mu}(u\eta_{\lambda+\mu}),w)=(c_\lambda\otimes c_\mu)(u\cdot(\eta_{\lambda}\otimes\eta_\mu),w)=(c_\lambda\otimes c_\mu)(\eta_{\lambda}\otimes\eta_\mu,\tau(u) w)=0.\] 

 Now for $u\eta_\lambda\in \Delta_{\ZZ_{(p)}}(\lambda)$, we have
 $\iota_\lambda(u\eta_\lambda)=u\eta_\lambda\otimes \eta_\mu = \iota_{\lambda,\mu} \psi_{\lambda,\mu}(u\eta_\lambda \otimes \eta_\mu) + w$ for some $w\in \mathrm{ker}(\psi_{\lambda,\mu})$, and for any $u'\eta_{\lambda+\mu} \in \Delta_{\ZZ_{(p)}}(\lambda+\mu)$, using the claim above,
\begin{align*}
    c_{\lambda+\mu}(\psi_{\lambda,\mu}\iota_\lambda(u\eta_\lambda),
    u'\eta_{\lambda+\mu})
    &=
    (c_\lambda\otimes c_\mu)(\iota_{\lambda,\mu}\psi_{\lambda,\mu}
    \iota_{\lambda}(u\eta_{\lambda}),\iota_{\lambda,\mu}(u'\eta_{\lambda+ \mu}))\\
    &=
    (c_\lambda\otimes c_\mu)(\iota_{\lambda,\mu}\psi_{\lambda,\mu}
    (u\eta_\lambda\otimes \eta_\mu),\iota_{\lambda,\mu}(u'\eta_{\lambda+\mu}))\\
    &=
    (c_\lambda\otimes c_\mu)(\iota_{\lambda}(u\eta_\lambda),\iota_{\lambda,\mu}(u'\eta_{\lambda+\mu}))\\
    &= (c_\lambda\otimes c_\mu)(u\eta_\lambda\otimes
    \eta_\mu, u'(\eta_\lambda\otimes \eta_\mu))\\
    & = c_\lambda(u\eta_\lambda, u'\eta_\lambda).
\end{align*}
Consequently, if $u\eta_\lambda\in \Delta_{\ZZ_{(p)}}(\lambda)^i$, then $\psi_{\lambda,\mu}\iota_\lambda(u\eta_\lambda)\in \Delta_{\ZZ_{(p)}}(\lambda+\mu)^i$.
It remains to show that
$\psi_{\lambda,\mu}\iota_\lambda:
 \Delta_{\ZZ_{(p)}}(\lambda)\to  \Delta_{\ZZ_{(p)}}
 (\lambda+\mu)$ is injective, i.e.,
 $\psi_{\lambda,\mu}\iota_{\lambda}(
 u\eta_\lambda)
 \neq 0$ for $u\eta_\lambda\neq 0$. We may take
 $u'\eta_\lambda \in  \Delta_{\ZZ_{(p)}}(\lambda)$ for some $u'\in \mathrm{Dist}(G_{\ZP})$
 such that $c_\lambda(u\eta_\lambda, u'\eta_\lambda)\neq 0$.
 Here we use the fact that $c_\lambda$ is non-degenerate
 over the field of fractions of $\ZP$. Then the identity above reads
 $c_{\lambda+\mu}(\psi_{\lambda,\mu}\iota_{\lambda}(
 u\eta_\lambda), u'\eta_{\lambda+\mu})
 =c_\lambda(u\eta_\lambda, u'\eta_\lambda)\neq 0$.
 So $\psi_{\lambda,\mu}\iota_\lambda(u\eta_\lambda)\neq 0$
 as desired.

\end{proof}

\begin{rem} \label{rem:Jantzen} 
The injective $\ZP$-linear map $\psi_{\lambda,\mu}
 \iota_{\lambda}:  \Delta_{\ZZ_{(p)}}(\lambda)^i\to
 \Delta_{\ZZ_{(p)}}(\lambda+\mu)^i$ clearly induces
 an $\FF$-linear map $\Delta(\lambda)^i\to\Delta(\lambda+\mu)^i$.  But the latter  
is not necessarily injective.
For example, when $p=3$, the canonical $\mathrm{SL}_2(\FF)$-morphism
 $\Delta(5)\to \Delta(3)\otimes \Delta(2)$ admits a
 splitting $\psi_{3,2}$ defined over $\ZZ_{(3)}$ by Theorem \ref{thm:split-condition-main} (here the weight $n\omega$, where $\omega$ is the unique fundamental weight of $\mathrm{SL}_2(\FF)$, is denoted simply by $n$). In
 $\Delta_{\ZZ_{(3)}}(3)\otimes \Delta_{\ZZ_{(3)}}(2)$,
 we have
 $$f_\alpha \eta_3\otimes \eta_2 = \tfrac{3}{5}f_\alpha(\eta_3\otimes \eta_2)+ (-\tfrac{3}{5}\eta_2 \otimes f_\alpha\eta_2
 + \tfrac{2}{5}f_\alpha\eta_3 \otimes \eta_2).$$
 Therefore, $\psi_{3,2}(f_\alpha \eta_3\otimes \eta_2) =\tfrac{3}{5}f_\alpha \eta_5$.
 In particular, the image of $f_\alpha \eta_3$ under $\psi_{3,2}\circ \iota_3$, in $\Delta(5)$, is zero.


\end{rem}

We thank H.\ Andersen for communicating to us the following result, which gives a necessary split  condition for $\iota_{\lambda,\mu}$ over $\FF$, which is certainly a necessary split condition for $\iota_{\lambda,\mu}$ over $\ZZ_{(p)}$. 

 \begin{prop}[Andersen] \label{prop:andersen-criterion}
 Let $G$ be a connected, simply-connected and semisimple algebraic group of rank $n$ over $\FF$, and let $\omega_1,\dotsc, \omega_n$ be its fundamental weights. Let
 $\lambda= \sum_{i=1}^n \lambda_i \omega_i$ and $\mu= \sum_{i=1}^n \mu_i \omega_i$ be two dominant integral weights for
 $G$. Then the canonical $G$-morphism
 $\iota_{\lambda,\mu} : \Delta(\lambda+\mu)\to \Delta(\lambda)\otimes \Delta(\mu)$
 splits over $\FF$ only if $p\nmid \binom{\lambda_i+\mu_i}{\lambda_i}$
 for all $i$.
 \end{prop}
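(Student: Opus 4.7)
The plan is to apply $\psi$ to the single element $f_i^{(\lambda_i)}\eta_\lambda\otimes\eta_\mu$ for each fixed $i$, and then test the resulting identity by applying $e_i^{(\lambda_i)}$. The whole argument rests on one preparatory observation: the weight space $(\Delta(\lambda+\mu))_{\lambda+\mu-\lambda_i\alpha_i}$ is one-dimensional over $\FF$, spanned by $f_i^{(\lambda_i)}\eta_{\lambda+\mu}$ (using divided powers $f_i^{(k)}$ in $\mathrm{Dist}(G)$). I would establish this via a PBW/Kostant-partition argument on $\mathrm{Dist}(G_\ZZ)^-$: since every non-simple positive root of $G$ involves some $\alpha_j$ with $j\ne i$ to a positive extent, the only way to write $\lambda_i\alpha_i$ as a non-negative integer combination of positive roots uses $\alpha_i$ alone, so the corresponding weight space of $\Delta_\ZZ(\lambda+\mu)=\mathrm{Dist}(G_\ZZ)\eta_{\lambda+\mu}$ is the rank-one $\ZZ$-module generated by $f_i^{(\lambda_i)}\eta_{\lambda+\mu}$, and this remains one-dimensional and nonzero after base change to $\FF$.

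With this in hand, the $T$-equivariance of $\psi$ forces
\[
\psi\bigl(f_i^{(\lambda_i)}\eta_\lambda\otimes\eta_\mu\bigr)=c\,f_i^{(\lambda_i)}\eta_{\lambda+\mu}
\]
for some scalar $c\in\FF$. Next I would apply $e_i^{(\lambda_i)}$ to both sides. On the right, the classical $\mathfrak{sl}_2$ identity $e^{(k)}f^{(k)}v_m=\binom{m}{k}v_m$ for an $\mathfrak{sl}_2$-highest weight vector $v_m$ of $h_i$-weight $m$ yields $\binom{\lambda_i+\mu_i}{\lambda_i}\eta_{\lambda+\mu}$, since $\eta_{\lambda+\mu}$ has $h_i$-weight $\lambda_i+\mu_i$. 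On the left, I would expand via the coproduct as $e_i^{(\lambda_i)}\bigl(f_i^{(\lambda_i)}\eta_\lambda\otimes\eta_\mu\bigr)=\sum_{a+b=\lambda_i}e_i^{(a)}f_i^{(\lambda_i)}\eta_\lambda\otimes e_i^{(b)}\eta_\mu$. The terms with $b\ge 1$ vanish because $\eta_\mu$ is a $G$-highest weight vector, and the surviving $b=0$ term collapses to $\eta_\lambda\otimes\eta_\mu$ upon applying the same $\mathfrak{sl}_2$ identity to $\eta_\lambda$ (of $h_i$-weight $\lambda_i$). Thus the left side is $\psi(\eta_\lambda\otimes\eta_\mu)=\eta_{\lambda+\mu}$.

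Equating the two sides gives $c\binom{\lambda_i+\mu_i}{\lambda_i}=1$ in $\FF$, which forces $\binom{\lambda_i+\mu_i}{\lambda_i}$ to be a unit and hence $p\nmid\binom{\lambda_i+\mu_i}{\lambda_i}$. The main obstacle is the one-dimensionality of the weight space $(\Delta(\lambda+\mu))_{\lambda+\mu-\lambda_i\alpha_i}$; once it is in place, the rest is a direct coproduct-plus-$\mathfrak{sl}_2$ computation evaluated at the three highest weight vectors $\eta_\lambda,\eta_\mu,\eta_{\lambda+\mu}$. The restriction to $\mathrm{SL}_2$ alluded to in the attribution is effected by confining the whole argument to the $\mathfrak{sl}_2$-triple $(e_i,f_i,h_i)$ inside $\mathrm{Dist}(G)$, without having to analyse the $\mathrm{SL}_2$-module structure of the restricted Weyl modules themselves.
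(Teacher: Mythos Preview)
Your argument is correct. Both your approach and the paper's hinge on the same idea—restricting attention to the $\alpha_i$-direction—but they implement it differently. The paper identifies the $U_{\FF}^i$-submodule $\Delta^{\alpha_i}(\lambda+\mu)$ generated by $\eta_{\lambda+\mu}$ with the $\mathrm{SL}_2$-Weyl module $\Delta(\lambda_i+\mu_i)$, checks that a splitting $\psi_{\lambda,\mu}$ of $\iota_{\lambda,\mu}$ restricts to a splitting of the canonical $\mathrm{SL}_2$-map $\Delta(\lambda_i+\mu_i)\to\Delta(\lambda_i)\otimes\Delta(\mu_i)$, and then invokes Donkin's criterion \cite[\S4.8(12)]{Donkin98} for $\mathrm{SL}_2$ as a black box. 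You instead isolate the single weight space $(\Delta(\lambda+\mu))_{\lambda+\mu-\lambda_i\alpha_i}$, show it is one-dimensional via the Kostant partition function, and then run the divided-power identity $e_i^{(\lambda_i)}f_i^{(\lambda_i)}\eta = \binom{\langle\text{wt}(\eta),\alpha_i^\vee\rangle}{\lambda_i}\eta$ directly. This is more self-contained: it effectively re-proves the ``only if'' direction of Donkin's $\mathrm{SL}_2$ result on the spot rather than citing it, and it needs only one weight space to be one-dimensional rather than the full identification $\Delta^{\alpha_i}(\lambda+\mu)\cong\Delta(\lambda_i+\mu_i)$. The paper's route, on the other hand, makes the module-theoretic reduction to $\mathrm{SL}_2$ explicit and reusable.
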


 \begin{proof}
 Let $U_{\FF}$ be the hyperalgebra of $G$, and for each
 $1\leq i\leq n$, let $U_{\FF}^i$ be the $\FF$-subalgebra of
 $U_{\FF}$ generated by the divided powers $
 e_{\alpha_i}^{(m)}$, $f_{\alpha_i}^{(m)}$ and
 $\binom{h_i}{m}$ for all $m \in \ZZ^+$. Let $\Delta^{\alpha_i}(\lambda)$
 be the $U_{\FF}^i$-submodule of $\Delta(\lambda)$ generated
 by the highest weight vector $\eta_\lambda$ and let
 $\Delta^{\alpha_i}(\mu)$ and $\Delta^{\alpha_i}(\lambda+\mu)$
 be defined similarly.
 For each $1\leq i\leq n$, there is a commutative
 diagram of $U_{\FF}^i$-modules as follows, where $\varphi_i$ is the canonical morphism:
 \[
 \begin{CD}
 \Delta^{\alpha_i}(\lambda+\mu) @>{\varphi_i}>> \Delta^{\alpha_i}(\lambda)\otimes \Delta^{\alpha_i}(\mu) \\
 @VVV @VVV  \\
 \Delta(\lambda+\mu) @>\iota_{\lambda,\mu}>> \Delta(\lambda)\otimes \Delta(\mu)
 \end{CD}
 \]
 Note that $U_{\FF}^i$ is canonically isomorphic to the
 hyperalgebra of $\mathrm{SL}_2(\FF)$. Moreover, under this identification,
 $\Delta^{\alpha_i}(\lambda)$ is the same as the Weyl
 module $\Delta(\lambda_i)$ for $\mathrm{SL}_2(\FF)$, and
 $\varphi_i$ is the same as the canonical $\mathrm{SL}_2(\FF)$-morphism
 $\Delta(\lambda_i+\mu_i)\to \Delta(\lambda_i)\otimes
 \Delta(\mu_i)$. Also note that if $\iota_{\lambda,\mu}$ admits a
 splitting $\psi_{\lambda,\mu}$, then the image of
 $\Delta^{\alpha_i}(\lambda)\otimes \Delta^{\alpha_i}(\mu)$
 under $\psi_{\lambda,\mu}$ lies in the $U_{\FF}^i$-submodule
 $\Delta^{\alpha_i}(\lambda+\mu)$ of $\Delta(\lambda+\mu)$,
 i.e., $\psi_{\lambda,\mu}$ restricts down to give a
 splitting of $\varphi_i$. Now by \cite[\S4.8(12)]{Donkin98} (or Proposition \ref{prop:Donkin-criterion}, see later), $\varphi_i$ admits a splitting if and only if $p\nmid \binom{\lambda_i+\mu_i}{\lambda_i}$, and so $\iota_{\lambda,\mu}$ splits only if $p\nmid \binom{\lambda_i+\mu_i}{\lambda_i}$ for all $i$.
 \end{proof}

In view of Proposition \ref{prop:andersen-criterion}, we shall refer to the condition $p\nmid {\lambda_i+\mu_i\choose \lambda_i}$ for all $i$ as \textit{Andersen's condition} (on $p$, for the fixed pair $(\lambda,\mu)$ of dominant integral weights.)

\subsection{Split condition for $\iota_{\lambda,\mu}$ for type $A$}

The rest of the section is devoted to the split condition for $\iota_{\lambda,\mu}$ when $G$ is the general linear group $\mathrm{GL}_N(\FF)$, for which we are able to relate the split condition to the product of certain Young symmetrizers and to the denominator of a certain Young's seminormal basis vector.  The weights $\lambda$ and $\mu$ of $G$ in this subsection are polynomial weights, written as partitions with at most $N$ parts.
We recall first the following result, essentially due to Donkin.

 \begin{prop} \label{prop:Donkin-criterion}
 Let $m,n \in \ZZ^+$, and let $\lambda = (m)$ and $\mu = (n)$.  The following statements are equivalent:
\begin{enumerate}
\item[(i)] $\iota_{\lambda,\mu}$ splits over $\ZP$;
\item[(ii)] $\iota_{\lambda,\mu}$ splits over $\FF$;
\item[(iii)] $p \nmid \binom{m+n}{n}$.
\end{enumerate}
 \end{prop}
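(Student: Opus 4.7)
The strategy is to prove the equivalence via the chain $(\mathrm{iii}) \Rightarrow (\mathrm{i}) \Rightarrow (\mathrm{ii}) \Rightarrow (\mathrm{iii})$. The implication $(\mathrm{i}) \Rightarrow (\mathrm{ii})$ is immediate by base change along $\ZP \to \FF$, and $(\mathrm{ii}) \Rightarrow (\mathrm{iii})$ follows from Proposition~\ref{prop:andersen-criterion} (Andersen's criterion) after restricting the given $\FF$-splitting from $\mathrm{GL}_N$ to its $\mathrm{SL}_N$-subgroup: in $\mathrm{SL}_N$-fundamental weights, $\lambda = m\omega_1$ and $\mu = n\omega_1$, so the only nontrivial binomial in Andersen's condition is $\binom{m+n}{m}$.

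The substantive step is $(\mathrm{iii}) \Rightarrow (\mathrm{i})$. Since $\Delta_\QQ((m+n))$ appears with multiplicity one in $\Delta_\QQ((m)) \otimes_\QQ \Delta_\QQ((n))$ by Pieri's rule, any $\QQ$-splitting of $\iota_{\lambda,\mu}$ is unique. The natural candidate is the $G_\QQ$-morphism $\phi$ induced by polynomial multiplication in the symmetric algebra $S(V_\QQ)$, which maps $\eta_m \otimes \eta_n$ to $\eta_{m+n}$ (taking $\eta_k = v_1^k$). For each multi-index $\alpha$ with $|\alpha| = k$, write $u_\alpha^{(k)} := \binom{k}{\alpha} v^\alpha$ (multinomial coefficient); the collection $\{u_\alpha^{(k)}\}$ is a $\ZZ$-basis of the integral form $\Delta_\ZZ((k)) = \mathrm{Dist}(G_\ZZ)\cdot v_1^k$. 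A direct computation, combined with the multinomial identity
\[
\binom{m}{\alpha}\binom{n}{\beta}\binom{m+n}{m} = \binom{m+n}{\alpha+\beta}\binom{\alpha+\beta}{\alpha},
\]
yields
\[
\phi(u_\alpha^{(m)} \otimes u_\beta^{(n)}) = \frac{\binom{m}{\alpha}\binom{n}{\beta}}{\binom{m+n}{\alpha+\beta}}\, u_{\alpha+\beta}^{(m+n)} = \frac{\binom{\alpha+\beta}{\alpha}}{\binom{m+n}{m}}\, u_{\alpha+\beta}^{(m+n)}.
\]
Under the hypothesis $p \nmid \binom{m+n}{m}$, the denominator is a unit in $\ZP$ and every coefficient becomes $\ZP$-integral. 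Hence $\phi$ restricts to a $G_\ZP$-morphism $\Delta_\ZP((m)) \otimes_\ZP \Delta_\ZP((n)) \to \Delta_\ZP((m+n))$; the companion identity $\iota_{\lambda,\mu}(u_\gamma^{(m+n)}) = \sum_{\alpha+\beta=\gamma} u_\alpha^{(m)} \otimes u_\beta^{(n)}$ together with the Vandermonde-type sum $\sum_{\alpha+\beta=\gamma}\binom{\gamma}{\alpha} = \binom{m+n}{m}$ confirms $\phi \circ \iota_{\lambda,\mu} = \mathrm{id}$, yielding the required splitting.

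The main obstacle, to my mind, is twofold: identifying the Jantzen-style integral form $\Delta_\ZZ((k))$ with the $\ZZ$-span of the normalized basis $\{u_\alpha^{(k)}\}$ via the distribution-algebra action on the highest weight vector, and establishing the multinomial identity displayed above. Once both are in hand, the whole argument reduces to the clean observation that $\binom{m+n}{m}$ is a unit in $\ZP$ precisely under Andersen's condition, which is exactly what is needed to invert the denominator appearing in the formula for $\phi$.
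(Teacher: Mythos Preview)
Your argument is correct, and it differs from the paper's proof, which simply attributes all three equivalences to Donkin's book \cite[\S4.8(12)]{Donkin98} and observes that Donkin's splitting is already defined over $\ZP$. Your explicit construction for $(\mathrm{iii})\Rightarrow(\mathrm{i})$---identifying $\Delta_{\ZZ}((k))$ with the $\ZZ$-span of $\{u_\alpha^{(k)}=\binom{k}{\alpha}v^\alpha\}$ inside $S^k(V_\QQ)$, and checking that the multiplication map $\phi$ on symmetric powers satisfies $\phi(u_\alpha^{(m)}\otimes u_\beta^{(n)})=\frac{\binom{\alpha+\beta}{\alpha}}{\binom{m+n}{m}}\,u_{\alpha+\beta}^{(m+n)}$---is essentially a clean reconstruction of Donkin's map, and has the advantage of being completely self-contained. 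The two identities you use (the multinomial identity and the Vandermonde-type sum) are easily verified.

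One caution regarding $(\mathrm{ii})\Rightarrow(\mathrm{iii})$: invoking Proposition~\ref{prop:andersen-criterion} here is logically delicate within this paper, because the paper's proof of Andersen's criterion itself appeals to ``\cite[\S4.8(12)]{Donkin98} (or Proposition~\ref{prop:Donkin-criterion}, see later)'' for the $\mathrm{SL}_2$ case. So long as one reads that proof as resting on the external Donkin reference rather than on the forward reference to the very proposition you are proving, there is no circularity---but you should be aware that your $(\mathrm{ii})\Rightarrow(\mathrm{iii})$ step is not more independent of Donkin's book than the paper's own proof. If you want a fully self-contained argument, you could instead argue directly: the $\ZZ$-map $\tilde\phi:=\binom{m+n}{m}\phi$ satisfies $\tilde\phi\circ\iota_{\lambda,\mu}=\binom{m+n}{m}\cdot\mathrm{id}$, and any $\FF$-morphism $\Delta((m))\otimes\Delta((n))\to\Delta((m+n))$ factors through the highest-weight-$(m+n)$ Weyl subquotient and is hence a scalar multiple of the reduction of $\tilde\phi$; so a splitting over $\FF$ forces $\binom{m+n}{m}$ to be invertible in $\FF$.
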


\begin{proof} The equivalence of (ii) and (iii) is proved by Donkin; see \cite[\S4.8(12)]{Donkin98} for its quantized analogue.  As the splitting map constructed by Donkin is defined over $\ZP$, this proves that (ii) implies (i).  That (i) implies (ii) is obvious.
\end{proof}

\begin{rem}
The splitting of $\iota_{\lambda,\mu}$ over $\ZP$ and over $\FF$ seems to be intimately related, as suggested by Proposition \ref{prop:Donkin-criterion}.  The former certainly implies the latter, and we know of no example which shows that the latter does not imply the former.
\end{rem}

To state the condition for which $\iota_{\lambda,\mu}$ splits over $\ZZ_{(p)}$, we make the following definition:

\begin{defn}
Let $\lambda \vdash n$ and $\mu \vdash m$.  Define $\theta_{\lambda,\mu} \in \mathbb{Z}^+$ by
$$
\theta_{\lambda,\mu} := \gcd\nolimits_{\ZZ\sym{n+m}} (Y_{\IT^{\lambda}}Y_{(\IT^{\mu})^{+n}} Y_{\add{\IT^{\lambda}}{\IT^{\mu}}}).
$$
\end{defn}
	
We first collect together some properties of $\theta_{\lambda,\mu}$.

\begin{lem} \label{lem:lem-for-product-of-Young-symmetrizers}
Let $\lambda \vdash n$, $\mu \vdash m$, and let $\s \in \Tab(\lambda)$ and $\t \in \Tab(\mu)$.  Then
\begin{enumerate}
\item
$\theta_{\lambda,\mu} = \gcd\nolimits_{\ZZ\sym{n+m}} (Y_{\s}Y_{\t^{+n}} Y_{\add{\s}{\t}}) = \gcd\nolimits_{\ZZ\sym{n+m}} ([C_{\add{\s}{\t}}] Y_{\add{\s}{\t}} \{ R_{\s} R_{\t^{+n}} \})$,
\item $\theta_{\lambda,\mu} = \theta_{\mu,\lambda}$, and
\item $\theta_{\lambda,\mu} \mid \phl^{\lambda+\mu}$.
\end{enumerate}
\end{lem}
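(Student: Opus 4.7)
The plan is as follows. For the first equality of part (1), I would conjugate by the permutation $\pi := d(\s)\, d(\t)^{+n} \in \sym{n+m}$. Since $\sym{\{1,\ldots,n\}}$ and $\sym{\{n+1,\ldots,n+m\}}$ commute elementwise, conjugation by $\pi$ separately conjugates $Y_{\IT^{\lambda}}$ by $d(\s)$ and $Y_{(\IT^{\mu})^{+n}}$ by $d(\t)^{+n}$, producing $Y_{\s}$ and $Y_{\t^{+n}}$ respectively. A direct verification of positions shows $\pi \cdot \add{\IT^{\lambda}}{\IT^{\mu}} = \add{\s}{\t}$—indeed, the positions within $[\lambda+\mu]$ assigned to the $\lambda$-block and $\mu$-block depend only on the shapes, not the fillings—so $\pi Y_{\add{\IT^{\lambda}}{\IT^{\mu}}}\pi^{-1} = Y_{\add{\s}{\t}}$. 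Because conjugation by $\pi$ is a ring automorphism of $\ZZ\sym{n+m}$ permuting the standard basis $\sym{n+m}$, it preserves $\gcd_{\ZZ\sym{n+m}}$. For the second equality, I would apply the $\ZZ$-linear anti-automorphism $\iota : \sigma \mapsto \sigma^{-1}$ of $\ZZ\sym{n+m}$, which also permutes the basis and so preserves $\gcd_{\ZZ\sym{n+m}}$; a direct computation using $\iota(\{R\}) = \{R\}$, $\iota([C]) = [C]$, together with the commutation of elements across $\ZZ\sym{\{1,\ldots,n\}}$ and $\ZZ\sym{\{n+1,\ldots,n+m\}}$, yields $\iota(Y_{\s}Y_{\t^{+n}}Y_{\add{\s}{\t}}) = [C_{\add{\s}{\t}}]Y_{\add{\s}{\t}}\{R_{\s}R_{\t^{+n}}\}$.

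For part (2), I would use the block-swap permutation $w \in \sym{n+m}$ defined by $w(i) = i+m$ for $1 \leq i \leq n$ and $w(n+j) = j$ for $1 \leq j \leq m$. Conjugation by $w$ sends $Y_{\IT^{\lambda}}$ to $Y_{(\IT^{\lambda})^{+m}}$ and $Y_{(\IT^{\mu})^{+n}}$ to $Y_{\IT^{\mu}}$. The key step is to verify that $w \cdot \add{\IT^{\lambda}}{\IT^{\mu}}$ and $\add{\IT^{\mu}}{\IT^{\lambda}}$, although possibly distinct as tableaux, induce identical row and column set-partitions of $\{1,\ldots,n+m\}$: in each row $i$ both contain precisely the entries $\{\mu_1 + \cdots + \mu_{i-1}+1, \ldots, \mu_1 + \cdots + \mu_i\} \cup \{m + \lambda_1 + \cdots + \lambda_{i-1} + 1, \ldots, m + \lambda_1 + \cdots + \lambda_i\}$, and the columns (as sets) in both tableaux are exactly the columns of $\IT^{\mu}$ together with the columns of $(\IT^{\lambda})^{+m}$. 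Since $Y_{\mathfrak{u}}$ depends on $\mathfrak{u}$ only through these set-partitions, I would conclude $Y_{w \cdot \add{\IT^{\lambda}}{\IT^{\mu}}} = Y_{\add{\IT^{\mu}}{\IT^{\lambda}}}$. Combined with the commutativity of $Y_{\IT^{\lambda}}$ and $Y_{(\IT^{\mu})^{+n}}$, conjugation by $w$ carries the defining expression of $\theta_{\lambda,\mu}$ to that of $\theta_{\mu,\lambda}$, so the two $\gcd$'s match.

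For part (3), the strategy is to exploit $Y_{\add{\s}{\t}}^2 = \phl^{\lambda+\mu}\, Y_{\add{\s}{\t}}$. A short computation gives $Y_{\s}Y_{\t^{+n}} = \{R_{\s}R_{\t^{+n}}\}[C_{\add{\s}{\t}}]$, and decomposing $R_{\add{\s}{\t}}$ as a disjoint union of left cosets of the subgroup $R_{\s}R_{\t^{+n}}$, say $R_{\add{\s}{\t}} = \bigsqcup_i \alpha_i (R_{\s}R_{\t^{+n}})$, I would deduce $Y_{\add{\s}{\t}} = \beta\, Y_{\s} Y_{\t^{+n}}$ with $\beta := \sum_i \alpha_i \in \ZZ\sym{n+m}$. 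Right-multiplying by $Y_{\add{\s}{\t}}$ yields $\phl^{\lambda+\mu}\, Y_{\add{\s}{\t}} = \beta\,(Y_{\s}Y_{\t^{+n}}Y_{\add{\s}{\t}})$. Because $\beta \in \ZZ\sym{n+m}$, every integer that divides all coefficients of $Y_{\s}Y_{\t^{+n}}Y_{\add{\s}{\t}}$ also divides all coefficients of $\beta\,(Y_{\s}Y_{\t^{+n}}Y_{\add{\s}{\t}})$, so $\theta_{\lambda,\mu}$ divides $\gcd_{\ZZ\sym{n+m}}(\phl^{\lambda+\mu}\, Y_{\add{\s}{\t}})$. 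The latter equals $\phl^{\lambda+\mu}$ by Lemma \ref{lem:basic-gcds}(2), since $Y_{\add{\s}{\t}}$ has coefficients in $\{0, \pm 1\}$ (from $R_{\add{\s}{\t}} \cap C_{\add{\s}{\t}} = \{e\}$) with at least one nonzero, giving $\gcd(Y_{\add{\s}{\t}}) = 1$. I expect the main technical obstacle to be the set-partition verification in part (2), since $w \cdot \add{\IT^{\lambda}}{\IT^{\mu}}$ and $\add{\IT^{\mu}}{\IT^{\lambda}}$ genuinely differ when there are ties among column lengths, and the argument hinges on the observation that $Y_{\mathfrak{u}}$ is determined by the induced row/column set-partitions alone.
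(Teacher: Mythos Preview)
Your proposal is correct and follows essentially the same approach as the paper: the same conjugation by $d(\s)d(\t)^{+n}$ and the inverse anti-automorphism for part (1), the same block-swap permutation for part (2) (the paper phrases your row/column set-partition observation as ``$\tau \cdot (\add{\s}{\t})$ is a column-wise rearrangement of $\add{\t}{\s}$''), and the same coset decomposition $Y_{\add{\s}{\t}} = \{\Gamma\} Y_{\s}Y_{\t^{+n}}$ combined with $Y_{\add{\s}{\t}}^2 = \phl^{\lambda+\mu} Y_{\add{\s}{\t}}$ for part (3). Your explicit justification that $\gcd_{\ZZ\sym{n+m}}(Y_{\add{\s}{\t}}) = 1$ via $R_{\add{\s}{\t}} \cap C_{\add{\s}{\t}} = \{e\}$ is a detail the paper leaves implicit.
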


 \begin{proof}
	Under the anti-automorphism of $\ZZ\sym{n+m}$ induced by
	the inverse operator on $\sym{n+m}$, the image of
	$Y_{\s}Y_{\t^{+n}}Y_{\add{\s}{\t}} = \{ R _{\s} \} [C_{\s}] \{ R_{\t^{+n}} \} [C_{\t^{+n}} ]\{R_{\add{\s}{\t}}\}[C_{\add{\s}{\t}}]$
	is
	\begin{align*}
	[C_{\add{\s}{\t}}]\{R_{\add{\s}{\t}}\}[C_{\t^{+n}}]\{R_{\t^{+n}}\}
	[C_{\s}]\{R_{\s}\} &= [C_{\add{\s}{\t}}]\{R_{\add{\s}{\t}}\}[C_{\t^{+n}}][C_{\s}]\{R_{\t^{+n}}\}\{R_{\s}\} \\
	&= [C_{\add{\s}{\t}}] Y_{\add{\s}{\t}} \{ R_{\s} R_{\t^{+n}} \},
	\end{align*}
	since $C_{\s}$ commutes with $\{R_{\t^{+n}}\}$, and $C_{\add{\s}{\t}} = C_{\s}C_{\t^{+n}}$.
	In particular, they have the same $\gcd$ in $\ZZ\sym{n+m}$.  This proves the second equality in part (1).
	
	For the first equality in part (1), let $h = d(\s) d(\t)^{+n}\in  \sym{n+m}$, so that $\s=
	h \cdot \IT^{\lambda}$ and $\t^{+n}= h \cdot (\IT^{\mu})^{+n}$, and hence $\add{\s}{\t} = h \cdot (\add{\IT^{\lambda}}{\IT^{\mu}})$.  Thus $Y_{\s}Y_{\t^{+n}} Y_{\add{\s}{\t}} = h (Y_{\IT^{\lambda}}Y_{(\IT^{\mu})^{+n}} Y_{\add{\IT^{\lambda}}{\IT^{\mu}}}) h^{-1}$, and so $Y_{\s}Y_{\t^{+n}} Y_{\add{\s}{\t}}$ has the same gcd as $Y_{\IT^{\lambda}}Y_{(\IT^{\mu})^{+n}} Y_{\add{\IT^{\lambda}}{\IT^{\mu}}}$ over $\ZZ\sym{n+m}$.
	
	For part (2), let $\tau \in \sym{n+m}$ such that $\tau(i) = i+m$ if $i \leq n$, and $\tau(i) = i-n$ otherwise.  Then $\tau \cdot \s = \s^{+m}$ and $\tau \cdot \t^{+n} = \t$.  Furthermore, $\tau \cdot (\add{\s}{\t})$ is a column-wise rearrangement of $\add{\t}{\s}$; consequently, $Y_{\tau\cdot(\add{\s}{\t})}=Y_{\add{\t}{\s}}$. Thus
	$\tau (Y_{\s}Y_{\t^{+n}} Y_{\add{\s}{\t}}) \tau^{-1}=Y_{\t}Y_{\s^{+m}} Y_{\tau\cdot(\add{\s}{\t})}=Y_{\t}Y_{\s^{+m}} Y_{\add{\t}{\s}}$, and so $Y_{\t}Y_{\s^{+m}} Y_{\add{\t}{\s}}$ has the same gcd as $Y_{\s}Y_{\t^{+n}} Y_{\add{\s}{\t}}$, giving $\theta_{\lambda,\mu} = \theta_{\mu,\lambda}$.
	
	For part (3),
	note that $C_{\add{\s}{\t}} = C_{\s} C_{\t^{+n}}$, while $R_{\add{\s}{\t}}$ contains $R_{\s}R_{\t^{+n}}$.  Let $\Gamma$ be a left transversal of $R_{\s}R_{\t^{+n}}$ in $R_{\add{\s}{\t}}$.  Then
	$$
	Y_{\add{\s}{\t}} = \{ R_{\add{\s}{\t}} \} [ C_{\add{\s}{\t}}] = \{ \Gamma R_{\s}R_{\t^{+n}} \} [ C_{\s} C_{\t^{+n}} ] = \{ \Gamma \} \{ R_{\s} \} \{ R_{\t^{+n}} \} [C_{\s}][C_{\t^{+n}}] = \{ \Gamma \} Y_{\s}Y_{\t^{+n}}.$$
	Thus,
	$$
	\phl^{\lambda+\mu} Y_{\add{\s}{\t}} = (Y_{\add{\s}{\t}})^2 = \{\Gamma\} Y_{\s}Y_{\t^{+n}} Y_{\add{\s}{\t}},$$
	and so, since $\{ \Gamma \} \in \ZZ\sym{n+m}$ and $\gcd_{\ZZ\sym{n+m}} (Y_{\add{\s}{\t}}) = 1$, we have
	$$\gcd\nolimits_{\ZZ\sym{n+m}} (Y_{\s}Y_{\t^{+n}}Y_{\add{\s}{\t}}) \mid \gcd\nolimits_{\ZZ\sym{n+m}} (\{\Gamma\} Y_{\s}Y_{\t^{+n}}Y_{\add{\s}{\t}}) =  \gcd\nolimits_{\ZZ\sym{n+m}} (\phl^{\lambda+\mu} Y_{\add{\s}{\t}}) = \phl^{\lambda+\mu}.$$
\end{proof}

We now state our result on the split condition for $\iota_{\lambda,\mu}$ for type $A$.

\begin{thm}\label{thm:split-condition-main}
Let $\lambda\vdash n$, $\mu\vdash m$ and $N \in \ZZ^+$ with $N\geq n+m$. The canonical $\mathrm{GL}_N(\FF)$-morphism $\iota_{\lambda,\mu}:\Delta(\lambda+\mu)\to \Delta(\lambda)\otimes \Delta(\mu)$ admits a splitting defined over $\ZZ_{(p)}$ if and only if $p\nmid \frac{\phl^{\lambda+\mu}}{\theta_{\lambda,\mu}}$.
\end{thm}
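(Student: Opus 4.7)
The plan is to reduce the splitting question to one about left ideals in $\ZP\sym{n+m}$ via the Schur functor (valid for $N\ge n+m$), and then to compute the relevant denominator using the $\ZP$-summand structure established in Subsection~\ref{subsec:Specht-and-Young-basis}. Set
\[
a := Y_{\add{\IT^\lambda}{\IT^\mu}}, \qquad b := Y_{\IT^\lambda}Y_{(\IT^\mu)^{+n}}.
\]
Applying the Schur functor $\mathcal{S}$ (extracting the $(1^{n+m})$-weight space) to $\iota_{\lambda,\mu}$ yields the canonical map
\[
S^{\ZP}_{\lambda+\mu}\longrightarrow \mathrm{Ind}_{\sym{n}\times\sym{m}}^{\sym{n+m}}(S^{\ZP}_\lambda\boxtimes S^{\ZP}_\mu)
\]
of left $\ZP\sym{n+m}$-modules; for $N\ge n+m$, the Schur functor preserves and reflects splittings on this pair, since $\Hom_{\mathrm{GL}_N}(\Delta(\lambda+\mu),\Delta(\lambda)\otimes\Delta(\mu))$ is one-dimensional over $\QQ$ by Littlewood--Richardson and is carried isomorphically onto $\Hom_{\sym{n+m}}(S^{\QQ}_{\lambda+\mu},\mathrm{Ind}(S^{\QQ}_\lambda\boxtimes S^{\QQ}_\mu))$ under $\mathcal{S}$. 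Using the identification $S^{\ZP}_{\lambda+\mu}\cong \ZP\sym{n+m}\cdot a$ from Subsection~\ref{subsec:Specht-and-Young-basis} together with $\mathrm{Ind}_{\sym{n}\times\sym{m}}^{\sym{n+m}}(S^{\ZP}_\lambda\boxtimes S^{\ZP}_\mu)\cong \ZP\sym{n+m}\cdot b$ (by freeness of $\ZP\sym{n+m}$ as a right $\ZP(\sym{n}\times\sym{m})$-module and the identifications $S^{\ZP}_\lambda\cong\ZP\sym{n}Y_{\IT^\lambda}$, $S^{\ZP}_\mu\cong\ZP\sym{\{n+1,\ldots,n+m\}}Y_{(\IT^\mu)^{+n}}$), and the factorisation $a=\{\Gamma\}b$ from the proof of Lemma~\ref{lem:lem-for-product-of-Young-symmetrizers}(3), the canonical map becomes the natural inclusion $\ZP\sym{n+m}\cdot a\hookrightarrow \ZP\sym{n+m}\cdot b$.

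A left $\ZP\sym{n+m}$-linear splitting $\psi$ is determined by $\psi(b)\in \ZP\sym{n+m}\cdot a$, and the restriction-to-identity condition becomes $\{\Gamma\}\cdot \psi(b)=a$. Over $\QQ$, the identity $\{\Gamma\}\cdot(ba)=\phl^{\lambda+\mu}\cdot a$ (also from that proof) gives the unique solution $\psi(b)=ba/\phl^{\lambda+\mu}$, with well-definedness $\mathrm{LAnn}(b)\cdot \psi(b)=0$ following from $xb=0 \Rightarrow xba=0$. Hence a splitting defined over $\ZP$ exists if and only if $ba/\phl^{\lambda+\mu}\in\ZP\sym{n+m}\cdot a$.

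To finish, I would apply the argument at the end of Subsection~\ref{subsec:Specht-and-Young-basis}: $\ZZ\sym{n+m}\cdot Y_\s$ is a $\ZZ$-summand of $\ZZ\sym{n+m}$ for any standard tableau $\s$, and conjugation via $Y_{\sigma\cdot\s}=\sigma Y_\s\sigma^{-1}$ extends this to arbitrary tableaux, so $\ZP\sym{n+m}\cdot a$ is a $\ZP$-summand of $\ZP\sym{n+m}$. Then $\QQ\sym{n+m}\cdot a\cap \ZP\sym{n+m}=\ZP\sym{n+m}\cdot a$ by Lemma~\ref{lem:basic-gcds}(4), and the condition reduces to $ba/\phl^{\lambda+\mu}\in\ZP\sym{n+m}$. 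Since $\gcd_{\ZP\sym{n+m}}(ba)=\theta_{\lambda,\mu}$ by definition, Lemma~\ref{lem:basic-gcds}(2)--(3) yields the chain of equivalences $ba/\phl^{\lambda+\mu}\in\ZP\sym{n+m}\iff \theta_{\lambda,\mu}/\phl^{\lambda+\mu}\in\ZP\iff p\nmid \phl^{\lambda+\mu}/\theta_{\lambda,\mu}$, completing the proof.

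The main technical point is the Schur-functor step in the first paragraph, which hinges on the Schur functor being fully faithful on the pair $(\Delta(\lambda)\otimes\Delta(\mu),\Delta(\lambda+\mu))$ for $N\ge n+m$. This can be established through good-filtration arguments on $\Delta(\lambda)\otimes \Delta(\mu)$ together with the one-dimensionality of the relevant $\Hom$-space; alternatively, one can bypass the Schur functor entirely by working directly inside $V_{\ZP}^{\otimes(n+m)}$ with realisations $\Delta_{\ZP}(\lambda+\mu)\cong V_{\ZP}^{\otimes(n+m)}\cdot a$ and $\Delta_{\ZP}(\lambda)\otimes \Delta_{\ZP}(\mu)\cong V_{\ZP}^{\otimes(n+m)}\cdot b$ (with care taken to align lattices so that $\iota_{\lambda,\mu}$ corresponds to the natural inclusion up to a $\ZP$-unit), using integral Schur--Weyl duality---the surjection $\ZP\sym{n+m}\twoheadrightarrow \End_{\mathrm{GL}_N}(V_{\ZP}^{\otimes(n+m)})$ and faithfulness of tensor space---to translate the lattice condition into the same gcd computation.
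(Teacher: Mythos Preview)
Your argument on the symmetric group side is correct and is essentially the paper's proof of Proposition~\ref{prop:split-condition-Specht}: realise both modules as left ideals generated by $a$ and $b$, observe $a=\{\Gamma\}b$, compute that the unique $\QQ$-splitting sends $b$ to $ba/\phl^{\lambda+\mu}$, and then use the $\ZZ$-summand property of $\ZZ\sym{n+m}a$ together with Lemma~\ref{lem:basic-gcds} to reduce to the gcd condition. That part is fine.

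The gap is precisely where you flag it: the Schur-functor reduction. The paper's Remark immediately after Proposition~\ref{prop:split-condition-Specht} addresses this head-on: the authors state that they are \emph{unable} to deduce Theorem~\ref{thm:split-condition-main} from Proposition~\ref{prop:split-condition-Specht} via the Schur functor, because the Hom spaces $\Hom_{\mathrm{GL}_N(\FF)}(X,Y)$ and $\Hom_{\FF\sym{n+m}}(fX,fY)$ need not be isomorphic even when $X$ and $Y$ are Weyl-filtered. They give an explicit counterexample at $p=2$ with $X=\Delta(3,1^4)$ and $Y=\Delta(5,1^2)$, and note that the equivalence only goes through for $p\ge 5$ via \cite{HN04}. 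Your proposed justification (``good-filtration arguments on $\Delta(\lambda)\otimes\Delta(\mu)$'') does not apply, since $\Delta(\lambda)\otimes\Delta(\mu)$ carries a Weyl filtration, not a good filtration, and the one-dimensionality over $\QQ$ says nothing about whether a $\ZP$-splitting on the symmetric-group side lifts to the $\mathrm{GL}_N$ side.

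Your alternative---working directly inside $V_{\ZP}^{\otimes(n+m)}$ via integral Schur--Weyl duality---is indeed the route the paper takes, though with the dual Weyl modules $\nabla$ rather than $\Delta$. The paper realises $\nabla_{\ZZ}(\nu)$ as the image of an explicit map $\delta^{\ZZ}_{\s}$ into $(E_{\ZZ}^{\otimes r})_{R_{\s}}$, constructs a concrete $G_{\ZZ}$-morphism $\psi'$ which equals $\phl^{\lambda+\mu}$ times any $\QQ$-splitting, and then evaluates $\gcd_{\mathcal{L}_1}(\psi'(\,\cdot\,))$ on a specific vector $v_{\underline{\omega}}[C_{\add{\s}{\t}}]$ using a chain of $\ZZ$-summand inclusions that terminates in $\ZZ\sym{n+m}$, recovering $\theta_{\lambda,\mu}$. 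The converse direction uses the $\ZZ G_{\ZZ}$-action to propagate the gcd bound from $v_{\underline{\omega}}$ to all basis vectors. This is genuinely different from your sketch: it avoids the Schur functor entirely by building the candidate splitting at the $\mathrm{GL}_N$ level from the outset, so the full-faithfulness question never arises.
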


To prove this theorem, we need some preparations.  Let $G_{\ZZ}=\mathrm{GL}_N(\ZZ)$ and $E_{\ZZ}$ be a free $\ZZ$-module of rank $N$ with a $\ZZ$-basis $\{v_1,\ldots, v_N\}$.
The $r$-th tensor power $E_{\ZZ}^{\otimes r}$ is a $(\ZZ G_{\ZZ}, \ZZ \sym{r})$-bimodule, where $\sym{r}$ acts on the right by place permutation, with a $\ZZ$-basis
$$\mathcal{B} := \{ v_{f} := v_{f(1)} \otimes \dotsb \otimes v_{f(r)} \mid f : \{ 1,\dotsc, r\} \to \{ 1,\dotsc, N \}\, \}.$$  The right action of $\sym{r}$ on $E_{\ZZ}^{\otimes r}$ restricts to give an action on $\mathcal{B}$ via $v_{f} \cdot \sigma = v_{f \circ \sigma}$, so that $\mathcal{B}$ is a disjoint union of $\sym{r}$-orbits.  This induces a decomposition of $E_{\ZZ}^{\otimes r}$ into a direct sum of right $\ZZ\sym{r}$-submodules, where each summand is indexed by an $\sym{r}$-orbit.  In particular, when $N\geq r$, so that each $\sigma \in \sym{r}$ may be viewed as a function from $\{1,\dotsc, r\}$ to $\{ 1,\dotsc, N \}$, $\mathcal{B}_{\sym{r}} := \{ v_{\sigma} \mid \sigma \in \sym{r} \}$ is such an $\sym{r}$-orbit, and its $\ZZ$-span, denoted $\ZZ \mathcal{B}_{\sym{r}}$, is a $\ZZ\sym{r}$-summand of $E_{\ZZ}^{\otimes r}$ isomorphic to $\ZZ \sym{r}$, where $v_{\sigma}$ is identified with $\sigma$.

Let $H$ be a subgroup of $\sym{r}$, and define
$$
(E_{\ZZ}^{\otimes r} )_H := E_{\ZZ}^{\otimes r} \otimes_{\ZZ \sym{r}} \ZZ \sym{r} \{ H \}.
$$
Since $E_{\ZZ}^{\otimes r}$ decomposes, as a right $\sym{r}$-module, into a direct sum where each summand is indexed by a $\sym{r}$-orbit of $\mathcal{B}$, $(E_{\ZZ}^{\otimes r})_H$ decomposes as a $\ZZ$-module into an analogous direct sum.
We have a $G_{\ZZ}$-morphism $\Phi_{H} : (E_{\ZZ}^{\otimes r} )_H \to E_\ZZ^{\otimes r}$, defined by $ a \otimes \{ H \} \mapsto a \{ H \}$ for $a \in E_{\ZZ}^{\otimes r}$, which respects the abovementioned decomposition.  When $N \geq r$ so that $\ZZ\mathcal{B}_{\sym{r}}$ is a right $\sym{r}$-summand of $E_{\ZZ}^{\otimes r}$, the map $\Phi_{H}$ is injective when restricted to the corresponding summand $(\ZZ\mathcal{B}_{\sym{r}})_H$,  and when post-composed with the isomorphism $\ZZ\mathcal{B}_{\sym{r}} \cong \ZZ\sym{r}$, sends  $(\ZZ\mathcal{B}_{\sym{r}})_H$ bijectively onto $\ZZ\sym{r}\{H\}$, which is a $\ZZ$-summand of $\ZZ\sym{r}$, since $\ZZ\sym{r} = \ZZ\sym{r}\{H\} \oplus \bigoplus_{\sigma \in \sym{r} \setminus \Gamma} \ZZ\sigma$, where $\Gamma$ is a left transversal of $H$ in $\sym{r}$.

 Let $\lambda$ be a partition of $r$ of at most $N$
 parts and let $\s$ be a $\lambda$-tableau. Consider the $G_{\ZZ}$-morphism:
 \[
 \delta^{\ZZ}_{\s}:E_{\ZZ}^{\otimes r}[C_{\s}]\hookrightarrow E_{\ZZ}^{\otimes r}\twoheadrightarrow (E^{\otimes r}_{\ZZ})_{ R_{\s}},
 \]
 where the second map sends $a \in E_{\ZZ}^{\otimes r}$ to $a \otimes \{ R_{\s} \} \in (E^{\otimes r}_{\ZZ})_{ R_{\s}}$.
 Its image $\mathrm{Im}(\delta^{\ZZ}_{\s})$ is isomorphic to the integral dual Weyl module $\nabla_{\ZZ}(\lambda)$ \cite[p.219--220]{ABW}, and
 is a $\ZZ$-summand of $(E_{\ZZ}^{\otimes r})_{R_{\s}}$.
 To describe the highest weight vector in $\mathrm{Im}(\delta^{\ZZ}_{\s})$, let $\underline{1}^{\s} : \{1 ,\dotsc, r \} \to \{1,\dotsc, N\}$ be the function such that $\underline{1}^{\s}(i) = j$ if the node labelled $i$ in $\s$ lies in its $j$-th row.  Then $\delta^{\ZZ}_{\s}(v_{\underline{1}^{\s}}[C_{\s}])$ is a highest weight vector in $\mathrm{Im}(\delta^{\ZZ}_{\s})$.

All the statements in the previous three paragraphs behave well under base change, so that we have entirely analogous statements as above, with $\ZZ$ being replaced by any commutative ring $R$ with 1.


\begin{proof}[Proof of Theorem \ref{thm:split-condition-main}]
Note that $\iota_{\lambda,\mu}$ admits a splitting defined over $\ZZ_{(p)}$ if and only if the canonical $\mathrm{GL}_N({\FF})$-morphism $\phi:\nabla(\lambda)\otimes \nabla(\mu)\to \nabla(\lambda+\mu)$ admits a splitting defined over $\ZZ_{(p)}$.

Let $\s = \IT^{\lambda}$ and $\t= \IT^{\mu}$.  Recall that $[C_{\add{\s}{\t}}] = [C_{\s}] [C_{\t^{+n}}]$, so that $E_R^{\otimes (n+m)} [C_{\add{\s}{\t}}] = E_R^{\otimes n}[C_{\s}]  \otimes E_R^{\otimes m}[C_{\t}]$, for any commutative ring $R$ with $1$.  Thus we have the following $G_{\ZZ}$-morphisms, which are well-behaved under base change:
\[
\zeta_\ZZ: E_{\ZZ}^{\otimes (n+m)}[C_{\add{\s}{\t}}] \xrightarrow{\delta^{\ZZ}_{\add{\s}{\t}}} (E_{\ZZ}^{\otimes (n+m)})_{R_{\add{\s}{\t}}} \xrightarrow{\beta} E_{\ZZ}^{\otimes (n+m)}[C_{\add{\s}{\t}}]
\]
\[
\psi_{\ZZ}: E_{\ZZ}^{\otimes (n+m)}[C_{\add{\s}{\t}}]\xrightarrow{\zeta_\ZZ} E_{\ZZ}^{\otimes (n+m)}[C_{\add{\s}{\t}}] =
E_{\ZZ}^{\otimes n}[C_{\s}]\otimes E_{\ZZ}^{\otimes m}[C_{\t}]
\xrightarrow{\delta^{\ZZ}_{\s}\otimes \delta^{\ZZ}_{\t}}
(E_{\ZZ}^{\otimes n})_{R_{\s}}\otimes (E_{\ZZ}^{\otimes m})_{R_{\t}}
\]
where $\beta(u\otimes \{R_{\add{\s}{\t}}\}) = u\{R_{\add{\s}{\t}}\}[C_{\add{\s}{\t}}]$ for $u\in E_{\ZZ}^{\otimes (n+m)}$, so that $\zeta_{\ZZ}$ is the same as the right `multiplication' by $Y_{\add{\s}{\t}}=\{R_{\add{\s}{\t}}\}[C_{\add{\s}{\t}}] $. 
Let
$
\psi': \mathrm{Im}(\delta_{\add{\s}{\t}}^{\ZZ})\to \mathrm{Im}(\delta_{\s}^{\ZZ})
\otimes \mathrm{Im}(\delta_{\t}^{\ZZ})
$
be the map $(\delta^{\ZZ}_{\s} \otimes \delta^{\ZZ}_{\t}) \circ \beta$ restricted to $\mathrm{Im}(\delta_{\add{\s}{\t}}^{\ZZ})$.  Then $\psi_{\ZZ} = \psi' \circ \delta_{\add{\s}{\t}}^{\ZZ}$, and so $\psi'$
satisfies, over $\QQ$,
\begin{align*}
\psi'(\delta_{\add{\s}{\t}}^{\QQ}(v_{\underline{1}^{\add{\s}{\t}}}[C_{\add{\s}{\t}}]))
& =
\psi_{\QQ}(v_{\underline{1}^{\add{\s}{\t}}}[C_{\add{\s}{\t}}])\\
& =
(\delta^{\QQ}_{\s}\otimes \delta^{\QQ}_{\t})(v_{\underline{1}^{\add{\s}{\t}}}[C_{\add{\s}{\t}}] Y_{\add{\s}{\t}})\\
& =\frac{1}{|R_{\add{\s}{\t}}|}
(\delta^{\QQ}_{\s}\otimes \delta^{\QQ}_{\t})(v_{\underline{1}^{\add{\s}{\t}}}
\{R_{\add{\s}{\t}}\}[C_{\add{\s}{\t}}]
Y_{\add{\s}{\t}})\\
&=\frac{\phl^{\lambda+\mu}}{|R_{\add{\s}{\t}}|}(\delta^{\QQ}_{\s}\otimes \delta^{\QQ}_{\t})
(v_{\underline{1}^{\add{\s}{\t}}}\{R_{\add{\s}{\t}}\}[C_{\add{\s}{\t}}])\\
&=
\phl^{\lambda+\mu} (\delta^{\QQ}_{\s}\otimes \delta^{\QQ}_{\t})
(v_{\underline{1}^{\add{\s}{\t}}}[C_{\add{\s}{\t}}])\\
&=
\phl^{\lambda+\mu}( \delta^{\QQ}_{\s}(v_{\underline{1}^{\s}}[C_{\s}])
\otimes \delta^{\QQ}_{\t}(v_{\underline{1}^{\t}}[C_{\t}]) ) \\
&=
\phl^{\lambda+\mu}( \delta^{\ZZ}_{\s}(v_{\underline{1}^{\s}}[C_{\s}])
\otimes \delta^{\ZZ}_{\t}(v_{\underline{1}^{\t}}[C_{\t}]) )
\end{align*}
where the third equality holds since $R_{\add{\s}{\t}}$ is
the stabilizer of $v_{\underline{1}^{\add{\s}{\t}}}$, and
the fourth follows since
$Y_{\add{\s}{\t}}^2 = \phl^{\lambda+\mu} Y_{\add{\s}{\t}}$, see
Subsection \ref{subsec:combinatorics-Young-tableaux}.

Assume that $\phi$ admits a splitting $\xi$ defined over $\ZZ_{(p)}$.
Identify $\nabla_{\ZZ_{(p)}}(\lambda+\mu)\cong \mathrm{Im}
(\delta_{\add{\s}{\t}}^{\ZP})$,
$\nabla_{\ZZ_{(p)}}(\lambda)\cong \mathrm{Im}
(\delta^{\ZP}_{\s})$
and $\nabla_{\ZZ_{(p)}}(\mu)\cong \mathrm{Im}
(\delta^{\ZP}_{\t})$. Under this identification, the splitting map $\xi$ of $\phi$
satisfies $$\xi(\delta^{\ZP}_{\add{\s}{\t}}(v_{\underline{1}^{\add{\s}{\t}}}[C_{\add{\s}{\t}}]))
=\delta^{\ZP}_{\s}(v_{\underline{1}^{\s}}[C_{\s}])\otimes
\delta^{\ZP}_{\t}(v_{\underline{1}^{\t}}[C_{\t}])
= \delta^{\ZZ}_{\s}(v_{\underline{1}^{\s}}[C_{\s}])\otimes
\delta^{\ZZ}_{\t}(v_{\underline{1}^{\t}}[C_{\t}]). $$
Since
$\dim_{\QQ}\Hom_{\mathrm{GL}_{N}(\QQ)}(\nabla_{\QQ}(\lambda+\mu),
\nabla_{\QQ}(\lambda)\otimes \nabla_{\QQ}(\mu))=1$,
we must have $\psi' = \phl^{\lambda+\mu}\xi$.
To proceed, note that
$N\geq n+m$ implies that elements of $\sym{n+m}$ may be viewed as functions from $\{1,\dotsc, n+m\}$ to $\{ 1,\dotsc, N\}$. Let $\underline{\omega}$ be the identity element in $\sym{n+m}$, viewed in this way.  Then, since 
$$
\psi'(\delta_{\add{\s}{\t}}^{\QQ} (v_{\underline{\omega}}[C_{\add{\s}{\t}}])) = (\delta_{\s}^{\QQ} \otimes \delta_{\t}^{\QQ}) (v_{\underline{\omega}}[C_{\add{\s}{\t}}]Y_{\add{\s}{\t}})= (v_{\underline{\omega}} [C_{\add{\s}{\t}}] Y_{\add{\s}{\t}}) \otimes \{ R_{\s} R_{\t^{+n}} \},$$
we have, by Lemmas \ref{lem:basic-gcds}(4) and \ref{lem:lem-for-product-of-Young-symmetrizers}(1),
\begin{align*}
\mathrm{gcd}_{\mathcal{L}_1}(\psi'(\delta^{\QQ}_{\add{\s}{\t}}
(v_{\underline{\omega}}[C_{\add{\s}{\t}}])))
& =\mathrm{gcd}_{\mathcal{L}_2}(v_{\underline{\omega}}
[C_{\add{\s}{\t}}]Y_{\add{\s}{\t}} \otimes \{R_{\s}R_{\t^{+n}}\})\\
& =\mathrm{gcd}_{\mathcal{L}_3}(v_{\underline{\omega}}
[C_{\add{\s}{\t}}]Y_{\add{\s}{\t}} \otimes \{R_{\s}R_{\t^{+n}}\})\\
& =
\mathrm{gcd}_{\ZZ\sym{n+m}\{ R_{\s}R_{\t^{+n}} \}}([C_{\add{\s}{\t}}]Y_{\add{\s}{\t}} \{R_{\s}R_{\t^{+n}}\}) \\
& =
\mathrm{gcd}_{\ZZ\sym{n+m}}([C_{\add{\s}{\t}}]Y_{\add{\s}{\t}} \{R_{\s}R_{\t^{+n}}\}) = \theta_{\lambda,\mu},
\end{align*}
where
$\mathcal{L}_1 := \mathrm{Im}(\delta^{\ZZ}_{\s} \otimes \delta^{\ZZ}_{\t})$ is a $\ZZ$-summand of $\mathcal{L}_2 := (E^{\otimes (n+m)}_{\ZZ})_{R_{\s}R_{\t^{+n}}}$, which also has another $\ZZ$-summand $\mathcal{L}_3 := (\ZZ\mathcal{B}_{\sym{n+m}})_{R_{\s}R_{\t^{+n}}}$ that is $\ZZ$-isomorphic to $\ZZ\sym{n+m}\{ R_{\s}R_{\t^{+n}} \}$, a $\ZZ$-summand of $\ZZ\sym{n+m}$.  Since $\psi' = \phl^{\lambda + \mu} \xi$, and $\xi$ is defined over $\ZP$, we thus have
$$
\frac{\theta_{\lambda,\mu}}{\phl^{\lambda+ \mu}} = \frac{1}{\phl^{\lambda+\mu}} \gcd\nolimits_{\mathcal{L}_1}(\psi'(\delta^{\QQ}_{\add{\s}{\t}}
(v_{\underline{\omega}}[C_{\add{\s}{\t}}]))) = \gcd\nolimits_{\mathcal{L}_1}(\xi(\delta_{\add{\s}{\t}}^{\ZP} (v_{\underline{\omega}}[C_{\add{\s}{\t}}])))
\in \ZP.
$$
In particular, since $\theta_{\lambda,\mu} \mid \phl^{\lambda+\mu}$ by Lemma \ref{lem:lem-for-product-of-Young-symmetrizers}(3), we must have $p \nmid \frac{\phl^{\lambda+\mu}}{\theta_{\lambda,\mu}}$.

Conversely, suppose that $p\nmid \frac{\phl^{\lambda+\mu}}{\theta_{\lambda,
	\mu}}$, or equivalently, $\frac{\theta_{\lambda,\mu}}{\phl^{\lambda,\mu}} \in \ZP$.  For each $v_f\in \mathcal{B}$, there exists $g_f \in \ZZ G_{\ZZ}$ such that $g_f( v_{\underline{\omega}}) = v_f$, where $\underline{\omega}$ continues to denote the identity of $\sym{n+m}$, viewed as a function from $\{1,\dotsc, n+m \} \to \{1,\dotsc, N\}$.  Thus,
\begin{align*}
\mathrm{gcd}_{\mathcal{L}_1}(\psi'(\delta^{\QQ}_{\add{\s}{\t}}
(v_{f}[C_{\add{\s}{\t}}]))) &= \mathrm{gcd}_{\mathcal{L}_1}(\psi'(\delta^{\QQ}_{\add{\s}{\t}}
(g_f ( v_{\underline{\omega}})[C_{\add{\s}{\t}}]))) \\
&= \mathrm{gcd}_{\mathcal{L}_1}(g_f(\psi'(\delta^{\QQ}_{\add{\s}{\t}}
(v_{\underline{\omega}}[C_{\add{\s}{\t}}])))),
\end{align*}
which is an integer multiple of $\mathrm{gcd}_{\mathcal{L}_1}(\psi'(\delta^{\QQ}_{\add{\s}{\t}}
(v_{\underline{\omega}}[C_{\add{\s}{\t}}]))) = \theta_{\lambda,\mu}$, since $g_f \in \ZZ G_{\ZZ}$ stabilizes $\mathcal{L}_1$.  Consequently,
$$
\mathrm{gcd}_{\mathcal{L}_1}(\frac{\psi'}{\phl^{\lambda+\mu}}(\delta^{\QQ}_{\add{\s}{\t}}
(v_{f}[C_{\add{\s}{\t}}]))) = \frac{\mathrm{gcd}_{\mathcal{L}_1}(\psi'(\delta^{\QQ}_{\add{\s}{\t}}
(v_{f}[C_{\add{\s}{\t}}])))}{\theta_{\lambda,\mu}} \, \frac{\theta_{\lambda,\mu}}{\phl^{\lambda,\mu}} \in \ZZ\frac{\theta_{\lambda,\mu}}{\phl^{\lambda,\mu}} \subseteq \ZP.
$$
Therefore, $\frac{\psi'}{\phl^{\lambda+\mu}}$ is defined over $\ZP$, and, as we've seen above, is a splitting map for $\phi$.
\end{proof}

Without the assumption $N\geq n+m$ in the above theorem, we get the following partial result.

\begin{cor}\label{cor:splitting-unstable-case}
Let $N$ be a natural number, and let $\lambda\vdash n$ and $\mu\vdash m$ be partitions of at most $N$ parts. If $p\nmid
\frac{\phl^{\lambda+\mu}}{\theta_{\lambda,\mu}}$, then the canonical $\mathrm{GL}_N(\FF)$-morphism $\iota_{\lambda,\mu}:\Delta(\lambda+\mu)\to \Delta(\lambda)\otimes\Delta(\mu)$ admits a splitting defined over $\ZZ_{(p)}$.
\end{cor}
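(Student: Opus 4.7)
The plan is to reduce Corollary \ref{cor:splitting-unstable-case} to Theorem \ref{thm:split-condition-main} by a standard truncation (Schur-functor) argument. We may assume $N < n+m$, since otherwise the statement coincides with Theorem \ref{thm:split-condition-main}. Setting $N' := n+m$, Theorem \ref{thm:split-condition-main} applied to $\mathrm{GL}_{N'}(\FF)$ furnishes a splitting $\psi_{\lambda,\mu}^{N'}$ of the canonical morphism $\iota_{\lambda,\mu}^{N'} : \Delta_{N'}(\lambda+\mu) \to \Delta_{N'}(\lambda) \otimes \Delta_{N'}(\mu)$, defined over $\ZP$. The task is then to push this splitting down to $\mathrm{GL}_N$.

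For each $r \in \ZZ^+$, let $S_{\mathcal{O}}(N',r)$ denote the Schur algebra governing degree-$r$ polynomial representations of $\mathrm{GL}_{N'}$ over a commutative ring $\mathcal{O}$, with weight idempotents $\xi_\alpha$ indexed by compositions $\alpha$ of $r$ into $N'$ parts. I would use the standard truncation idempotent
\[
e = e_{N,N'}^{(r)} := \sum_{\alpha : \alpha_{N+1} = \dotsb = \alpha_{N'} = 0} \xi_\alpha \ \in \ S_{\ZZ}(N',r).
\]
It is well known that $e \, S_{\mathcal{O}}(N',r) \, e \cong S_{\mathcal{O}}(N,r)$, and that the resulting exact functor $e \cdot : S_{\mathcal{O}}(N',r)\lmod \to S_{\mathcal{O}}(N,r)\lmod$ sends $\Delta_{N',\mathcal{O}}(\nu) \mapsto \Delta_{N,\mathcal{O}}(\nu)$ for every partition $\nu$ of $r$ with at most $N$ parts.

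The key compatibility, which I would verify by a direct weight-space inspection, is that truncation commutes with tensor products of polynomial modules: for polynomial $\mathrm{GL}_{N'}$-modules $V$ and $W$ of degrees $n$ and $m$,
\[
e_{N,N'}^{(n+m)} (V \otimes W) \ = \ (e_{N,N'}^{(n)} V) \otimes (e_{N,N'}^{(m)} W)
\]
as polynomial $\mathrm{GL}_N$-modules, since a composition $\beta + \gamma$ of $n+m$ (with $\beta,\gamma$ having non-negative entries) is supported in the first $N$ coordinates if and only if both $\beta$ and $\gamma$ are. Moreover, as $\lambda$, $\mu$ and $\lambda+\mu$ all have at most $N$ parts, their highest weight vectors survive truncation, and hence $e \cdot \iota_{\lambda,\mu}^{N'}$ is the canonical morphism $\iota_{\lambda,\mu}^N$ for $\mathrm{GL}_N$.

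Applying $e \cdot$ (with $r = n+m$) to $\psi_{\lambda,\mu}^{N'}$ therefore yields a $\mathrm{GL}_N(\FF)$-splitting of $\iota_{\lambda,\mu}^N$, and since $e$ already lies in $S_{\ZZ}(N',r) \subseteq S_{\ZP}(N',r)$ while $\psi_{\lambda,\mu}^{N'}$ is defined over $\ZP$, this splitting is also defined over $\ZP$. The main point requiring care, though essentially routine, is the integrality and tensor-product compatibility of the truncation; this is where I expect the most bookkeeping, but no genuine obstacle arises because the whole Schur-algebra framework is compatible with base change from $\ZZ$ to $\ZP$.
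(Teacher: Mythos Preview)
Your proposal is correct and follows essentially the same approach as the paper: reduce to the stable case $N'=n+m$ via Theorem \ref{thm:split-condition-main}, then descend using the truncation (Schur) functor $d_{N',N}$ of \cite[\S6.5]{Green80}. You spell out in more detail than the paper why truncation preserves tensor products, the canonical morphism $\iota_{\lambda,\mu}$, and the $\ZP$-structure, but the argument is the same in substance.
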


\begin{proof} Suppose that $p\nmid \frac{\phl^{\lambda+\mu}}{\theta_{\lambda,\mu}}$. If $N\geq n+m$, then the statement follows from Theorem \ref{thm:split-condition-main}. If $N<n+m$, then let $N'=n+m$ and, by Theorem \ref{thm:split-condition-main}, the canonical $\mathrm{GL}_{N'}(
	\FF)$-morphism $\Delta_{N'}(\lambda+\mu)\to \Delta_{N'}(\lambda)\otimes \Delta_{N'}(\mu)$ admits a splitting defined over $\ZZ_{(p)}$ (here we put the subscript $N'$ to emphasize that $\Delta_{N'}(\lambda), \Delta_{N'}(\mu)$ and $\Delta_{N'}(\lambda+\mu)$ are Weyl modules for
	$\mathrm{GL}_{N'}(\FF)$). Now applying the Schur functor $d_{N',N}$ from polynomial representations of $\mathrm{GL}_{N'}(\FF)$ to polynomial representations of $\mathrm{GL}_N(\FF)$ (see \cite[\S6.5]{Green80}), we deduce that the canonical $\mathrm{GL}_N(\FF)$-morphism $\Delta(\lambda+\mu)\to
	\Delta(\lambda)\otimes \Delta(\mu)$ also admits a splitting defined over $\ZZ_{(p)}$.
\end{proof}

\begin{eg}
The following examples in type $A$ show that Andersen's condition is generally not sufficient for $\iota_{\lambda,\mu}$ to split (over $
\ZP$), and in the case $N<n+m$, the condition $p\nmid \frac{\phl^{\lambda+\mu}}{\theta_{\lambda,\mu}}$ is not always necessary.
Note that 
for a polynomial weight $(\lambda_1,\ldots, \lambda_r )$  (with $r\leq N$) of $\mathrm{GL}_N(\FF)$, its associated dominant integral weight for $\mathrm{SL}_N(\FF)$ is $(\lambda_1-\lambda_2) \omega_1 + \dotsb + (\lambda_{N-1}-\lambda_N)\omega_{N-1}$, where $\omega_1,\dotsc, \omega_{N-1}$ are the fundamental weights of $\mathrm{SL}_N(\FF)$ and $\lambda_i$ is set to be zero if $i>r$.

\begin{enumerate}
\item $\lambda= (1,1)$, $\mu=(1,0)$:  In this case, Andersen's condition is empty (regardless of $N$), while $p\nmid \frac{\phl^{\lambda+\mu}}{\theta_{\lambda,\mu}}$ yields $p \ne 3$.  When $N = 2$, $\Delta(\lambda)$ is the one-dimensional determinant representation for  $\mathrm{GL}_2(\FF)$ and so, $\iota_{\lambda,\mu}: \Delta(\lambda + \mu) \to \Delta(\lambda) \otimes \Delta(\mu)$ is the canonical isomorphism, and thus splits over $\ZP$ for all $p$, showing that the condition $p \ne 3$ is not necessary.  If $N\geq 3$, $\iota_{\lambda,\mu}$ splits over $\ZZ_{(p)}$ if and only if $p\neq 3$, as predicted by Theorem \ref{thm:split-condition-main}, showing that Andersen's condition is insufficient.

\item $\lambda=(1,1)$, $\mu=(4,2)$: In this case, Andersen's condition is empty for $N = 2$, and is $p \ne 3$ for $N \geq 3$, while $p\nmid \frac{\phl^{\lambda+\mu}}{\theta_{\lambda,\mu}}$ yields $p \notin \{2,3\}$.  Once again, when $N=2$, $\Delta(\lambda)$ is the one-dimensional determinant representation for  $\mathrm{GL}_2(\FF)$, and so $\iota_{\lambda,\mu}$ splits over $\ZP$ for all $p$, show that the condition $p \notin \{2,3\}$ is unnecessary.  For $N \geq 3$, we found by brute force that $\iota_{\lambda,\mu}$ splits over $\ZP$ if and only if $p \notin \{2,3\}$, showing once again that Andersen's condition is insufficient.  
\end{enumerate}
\end{eg}

We also have a result analogous to Theorem \ref{thm:split-condition-main} for the symmetric groups.

\begin{prop} \label{prop:split-condition-Specht}
	Let $\lambda,\mu$ be partitions of $n$ and $m$ respectively. Then the canonical $\FF\sym{n+m}$-morphism $\jmath_{\lambda,\mu}:S^{\FF}_{\lambda+\mu}\to \mathrm{Ind}_{\sym{n}\times \sym{m}}^{\sym{n+m}}(S^{\FF}_\lambda\boxtimes S^{\FF}_\mu)$ admits a splitting defined over $\ZZ_{(p)}$ if and only if $p\nmid \frac{\phl^{\lambda+\mu}}{\theta_{\lambda,\mu}}$.  Here, we identify $\sym{n} \times \sym{m}$ with the subgroup $\sym{n}\sym{m}^{+n}$ of $\sym{n+m}$.
\end{prop}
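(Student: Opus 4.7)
The plan is to mimic the proof of Theorem~\ref{thm:split-condition-main} in the considerably simpler symmetric group setting, where the Young symmetrizer realizations of dual Specht modules, their outer tensor products and their inductions all live naturally inside the integral group algebra $\ZZ\sym{n+m}$, so no Schur algebra machinery is required.

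First I will identify the three modules in terms of Young symmetrizers: by Subsection~\ref{subsec:Specht-and-Young-basis}, $S^{\ZZ}_{\lambda+\mu}\cong \ZZ\sym{n+m}\,Y_{\add{\IT^\lambda}{\IT^\mu}}$ via $e_{\add{\IT^\lambda}{\IT^\mu}}\mapsto Y_{\add{\IT^\lambda}{\IT^\mu}}$, and similarly for the two factors; since each $\ZZ\sym{n}Y_{\IT^\lambda}$ and $\ZZ\sym{m}Y_{\IT^\mu}$ is a $\ZZ$-summand of $\ZZ\sym{n}$ and $\ZZ\sym{m}$ respectively (noted at the end of Subsection~\ref{subsec:Specht-and-Young-basis}), outer-tensoring and inducing from $\sym{n}\times\sym{m}^{+n}$ up to $\sym{n+m}$ identifies $\mathrm{Ind}^{\sym{n+m}}_{\sym{n}\times\sym{m}^{+n}}(S^{\ZZ}_{\lambda}\boxtimes S^{\ZZ}_{\mu})$ with $\ZZ\sym{n+m}\,Y_{\IT^\lambda}Y_{(\IT^\mu)^{+n}}$ (the $\ZZ$-splitness makes induction preserve the relevant exact sequences). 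Under these identifications, the canonical morphism $\jmath_{\lambda,\mu}$ becomes the natural inclusion $\ZZ\sym{n+m}\,Y_{\add{\IT^\lambda}{\IT^\mu}}\hookrightarrow \ZZ\sym{n+m}\,Y_{\IT^\lambda}Y_{(\IT^\mu)^{+n}}$, which makes sense because the relation $Y_{\add{\IT^\lambda}{\IT^\mu}}=\{\Gamma\}\,Y_{\IT^\lambda}Y_{(\IT^\mu)^{+n}}$ from the proof of Lemma~\ref{lem:lem-for-product-of-Young-symmetrizers}(3) places $Y_{\add{\IT^\lambda}{\IT^\mu}}$ inside the target.

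Next I will construct the candidate splitting $\psi$ over $\QQ$ as right multiplication by $\tfrac{1}{\phl^{\lambda+\mu}}Y_{\add{\IT^\lambda}{\IT^\mu}}$. The identity $Y_{\add{\IT^\lambda}{\IT^\mu}}^2=\phl^{\lambda+\mu}Y_{\add{\IT^\lambda}{\IT^\mu}}$ immediately gives $\psi\circ\jmath_{\lambda,\mu}=\mathrm{id}$, and by Frobenius reciprocity together with the Littlewood--Richardson coefficient $c^{\lambda+\mu}_{\lambda,\mu}=1$, the $\QQ$-Hom space is one-dimensional, so any $\ZP$-defined splitting must coincide with $\psi$ after base change to $\QQ$. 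Thus a splitting over $\ZP$ exists iff $\psi$ itself is defined over $\ZP$, and by $\sym{n+m}$-equivariance this reduces to whether the single element
\[
\psi\bigl(Y_{\IT^\lambda}Y_{(\IT^\mu)^{+n}}\bigr)=\tfrac{1}{\phl^{\lambda+\mu}}\,Y_{\IT^\lambda}Y_{(\IT^\mu)^{+n}}Y_{\add{\IT^\lambda}{\IT^\mu}}
\]
lies in $\ZP\sym{n+m}\,Y_{\add{\IT^\lambda}{\IT^\mu}}$.

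Finally, since $\ZZ\sym{n+m}\,Y_{\add{\IT^\lambda}{\IT^\mu}}$ is a $\ZZ$-summand of $\ZZ\sym{n+m}$, Lemma~\ref{lem:basic-gcds}(4) yields $\gcd_{\ZZ\sym{n+m}\,Y_{\add{\IT^\lambda}{\IT^\mu}}}(Y_{\IT^\lambda}Y_{(\IT^\mu)^{+n}}Y_{\add{\IT^\lambda}{\IT^\mu}})=\theta_{\lambda,\mu}$, and then Lemma~\ref{lem:basic-gcds}(2)--(3) combined with $\theta_{\lambda,\mu}\mid \phl^{\lambda+\mu}$ from Lemma~\ref{lem:lem-for-product-of-Young-symmetrizers}(3) give the desired equivalence: $\psi$ is $\ZP$-defined iff $\theta_{\lambda,\mu}/\phl^{\lambda+\mu}\in\ZP$, iff $p\nmid \phl^{\lambda+\mu}/\theta_{\lambda,\mu}$. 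The main obstacle is the first step: pinning down a correct identification of $\mathrm{Ind}^{\sym{n+m}}_{\sym{n}\times\sym{m}^{+n}}(S^{\ZZ}_{\lambda}\boxtimes S^{\ZZ}_{\mu})$ with $\ZZ\sym{n+m}\,Y_{\IT^\lambda}Y_{(\IT^\mu)^{+n}}$ under which $\jmath_{\lambda,\mu}$ really becomes the inclusion (requiring the $\ZZ$-summand property for the factor Specht modules so that tensor product and induction behave well integrally); once this is in place, the rest is a direct parallel of the argument for Theorem~\ref{thm:split-condition-main}.
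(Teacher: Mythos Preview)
Your proposal is correct and follows essentially the same approach as the paper's proof: realize all three modules as left ideals generated by Young symmetrizers, identify $\jmath_{\lambda,\mu}$ with the inclusion via $Y_{\add{\IT^\lambda}{\IT^\mu}}=\{\Gamma\}Y_{\IT^\lambda}Y_{(\IT^\mu)^{+n}}$, observe that the unique splitting over $\QQ$ is right multiplication by $\tfrac{1}{\phl^{\lambda+\mu}}Y_{\add{\IT^\lambda}{\IT^\mu}}$, and then test $\ZP$-definedness on the generator using the gcd lemmas. The only difference is in how uniqueness (up to scalar) of the backward map is established: you invoke Frobenius reciprocity and the Littlewood--Richardson coefficient $c^{\lambda+\mu}_{\lambda,\mu}=1$, whereas the paper computes directly, using James's lemma \cite[Lemma~4.6]{James78} that $[C_{\add{\s}{\t}}]\sigma\{R_{\add{\s}{\t}}\}\in\{0,\pm[C_{\add{\s}{\t}}]\{R_{\add{\s}{\t}}\}\}$, to show that any $\QQ\sym{n+m}$-morphism $\chi$ must send $Y_{\s}Y_{\t^{+n}}$ to a scalar multiple of $Y_{\s}Y_{\t^{+n}}Y_{\add{\s}{\t}}$; your route is cleaner if one is happy to cite Littlewood--Richardson, the paper's is more self-contained within the group algebra.
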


\begin{proof}
Let $\s \in \Tab(\lambda)$ and $\t \in \Tab(\mu)$.  Recall that the dual Specht modules $S^{\mathcal{O}}_{\lambda}$ and $S^{\mathcal{O}}_{\mu}$ (over any commutative ground ring $\mathcal{O}$ with $1$) may be realized as left ideals of the symmetric group rings generated by the Young symmetrizers $Y_{\s}$ and $Y_{\t}$ respectively.  The induced module $\mathrm{Ind}_{\sym{n}\times \sym{m}}^{\sym{n+m}}(S^{\mathcal{O}}_\lambda\boxtimes S^{\mathcal{O}}_\mu)$ may then be realized as the left ideal generated by $Y_{\s}Y_{\t^{+n}}$. As shown in the proof of Lemma \ref{lem:lem-for-product-of-Young-symmetrizers}, \[Y_{\add{\s}{\t}}=\{\Gamma\}Y_{\s}Y_{\t^{+n}} \in \mathcal{O}\sym{n+m} Y_{\s}Y_{\t^{+n}}\] 
	where $\Gamma$ is a left transversal of $R_{\s}R_{\t^{+n}}$ in $R_{\add{\s}{\t}}$.  As such, the left ideal in $\mathcal{O}\sym{n+m}$ generated by $Y_{\s}Y_{\t^{+n}}$, $\mathrm{Ind}_{\sym{n}\times \sym{m}}^{\sym{n+m}}(S^{\mathcal{O}}_\lambda\boxtimes S^{\mathcal{O}}_\mu)$, contains $S^{\mathcal{O}}_{\lambda+\mu}$, which is generated by $Y_{\add{\s}{\t}}$.  Under this realisation, the canonical ${\mathcal{O}}\sym{n+m}$-morphism $\jmath_{\lambda,\mu}:S^{\mathcal{O}}_{\lambda+\mu}\to \mathrm{Ind}_{\sym{n}\times \sym{m}}^{\sym{n+m}}(S^{\mathcal{O}}_\lambda\boxtimes S^{\mathcal{O}}_\mu)$ is the inclusion map.
	
	Let $\chi: \mathrm{Ind}^{\sym{n+m}}_{\sym{n}\times \sym{m}}(S^{\QQ}_\lambda\boxtimes S^{\QQ}_\mu) \to S^{\QQ}_{\lambda+\mu} $ be a nonzero $\QQ\sym{n+m}$-morphism, which exists since $\QQ\sym{n+m}$ is semisimple. Then $\chi(Y_{\s}Y_{\t^{+n}}) = \alpha Y_{\add{\s}{\t}}$ for some $\alpha \in \QQ \sym{n+m}$.  We evaluate $\chi( Y_{\s}Y_{\t^{+n}} (Y_{\s} Y_{\t^{+n}}))$ in two ways.  Firstly,
	\begin{align*}
	\chi( Y_{\s}Y_{\t^{+n}} (Y_{\s} Y_{\t^{+n}})) = \chi((Y_{\s})^2 (Y_{\t^{+n}})^2))
	&= \phl^{\lambda}\phl^{\mu} \chi (Y_{\s}Y_{\t^{+n}})
	= \phl^{\lambda}\phl^{\mu}( \alpha Y_{\add{\s}{\t}}).
	\end{align*}
	On the other hand, we also have
	\begin{align*}
	\chi( Y_{\s}Y_{\t^{+n}}(Y_{\s} Y_{\t^{+n}})) &= Y_{\s}Y_{\t^{+n}} \chi (Y_{\s} Y_{\t^{+n}}) \\
	&= Y_{\s}Y_{\t^{+n}} \alpha Y_{\add{\s}{\t}} \\
	&= \{R_{\s}\}\{R_{\t^{+n}}\}[C_{\s}][C_{\t^{+n}}]\alpha \{R_{\add{\s}{\t}}\}[C_{\add{\s}{\t}}].
	\end{align*}
	Since $[C_{\s}][C_{\t^{+n}}]\sigma \{R_{\add{\s}{\t}}\} = [C_{\add{\s}{\t}}]\sigma \{R_{\add{\s}{\t}}\} \in \{0, \pm [C_{\add{\s}{\t}}]\{R_{\add{\s}{\t}}\} \}$ for all $\sigma \in \sym{n+m}$ by \cite[Lemma 4.6]{James78}, we have $[C_{\s}][C_{\t^{+n}}]\alpha \{R_{\add{\s}{\t}}\} = z[C_{\s}][C_{\t^{+n}}]\{R_{\add{\s}{\t}}\}$ for some $z \in \QQ$, and hence, continuing with the second evaluation, we get
	$$
	\chi( Y_{\s}Y_{\t^{+n}}(Y_{\s} Y_{\t^{+n}})) = zY_{\s}Y_{\t^{+n}}Y_{\add{\s}{\t}}.$$
	Equating the two evaluations, we have, for $y := z/(\phl^{\lambda}\phl^{\mu}) \in \QQ$,
	$$ y Y_{\s}Y_{\t^{+n}}Y_{\add{\s}{\t}} = \alpha Y_{\add{\s}{\t}} = \chi(Y_{\s}Y_{\t^{+n}}).$$
	Thus,
	$$\chi(Y_{\add{\s}{\t}}) = \chi( \{\Gamma\} Y_{\s}Y_{\t^{+n}}) = \{\Gamma\} \chi(Y_{\s}Y_{\t^{+n} }) = \{ \Gamma \} yY_{\s}Y_{\t^{+n}} Y_{\add{\s}{\t}} = y (Y_{\add{\s}{\t}})^2 = y\,\phl^{\lambda+\mu} Y_{\add{\s}{\t}}.$$
	Now, $\chi$ is a splitting map for the inclusion map $\jmath_{\lambda,\mu}$ if, and only if,
	$\chi(Y_{\add{\s}{\t}}) = Y_{\add{\s}{\t}}$, i.e.\ $y = \frac{1}{\phl^{\lambda+\mu}}$.  Now this splitting map is defined over $\ZP$ if and only if $$S^{\ZP}_{\lambda+\mu} = \ZP \sym{n+m} Y_{\add{\s}{\t}} \ni \chi(Y_{\s} Y_{\t^{+n}}) = \frac{1}{\phl^{\lambda+\mu}} Y_{\s}Y_{\t^{+n}}Y_{\add{\s}{\t}},$$ i.e.\ $\ZP \ni \gcd_{\ZZ\sym{n+m}Y_{\add{\s}{\t}}}( \frac{1}{\phl^{\lambda+\mu}} Y_{\s}Y_{\t^{+n}}Y_{\add{\s}{\t}})= \frac{1}{\phl^{\lambda+\mu}} \gcd_{\ZZ\sym{n+m}}(Y_{\s}Y_{\t^{+n}}Y_{\add{\s}{\t}}) = \frac{\theta_{\lambda,\mu}}{\phl^{\lambda+\mu}}$, or equivalently, $p \nmid \frac{\phl^{\lambda+\mu}}{\theta_{\lambda,\mu}}$; here, we have used the fact that $\ZZ\sym{n+m}Y_{\add{\s}{\t}}$ is a $\ZZ$-summand of $\ZZ\sym{n+m}$ and Lemmas \ref{lem:basic-gcds}(2,4) and \ref{lem:lem-for-product-of-Young-symmetrizers}(1).
\end{proof}

\begin{rem}
Theorem \ref{thm:split-condition-main} and Proposition \ref{prop:split-condition-Specht} appears to be intimately related, but we are unable to immediately prove one using the other.  To be sure, one can certainly apply the Schur functor to Theorem \ref{thm:split-condition-main} to deduce immediately the reverse direction of Proposition \ref{prop:split-condition-Specht}, but the forward direction does not seem to be obtainable in this way due to the lack of inverse Schur functor in general.
More precisely, the Hom spaces $\Hom_{\mathrm{GL}_N(\FF)}(X,Y)$ and $\Hom_{\FF \sym{n+m}}(fX,fY)$ for polynomial $\mathrm{GL}_N(\FF)$-modules $X$ and $Y$ are not necessarily isomorphic, even when both $X$ and $Y$ are filtered by Weyl modules (for example, $p=2$, $X= \Delta(3,1^4)$ and $Y = \Delta(5,1^2)$).  We note however that, in the case of $p \geq 5$, these Hom spaces are indeed isomorphic \cite[Corollary 3.9.2]{HN04}, and using this, one can show that Theorem \ref{thm:split-condition-main} and Proposition \ref{prop:split-condition-Specht} are equivalent in this case.
\end{rem}


Our next result relates $\theta_{\lambda,\mu}$ to the denominator of $f_{\add{\IT^{\lambda}}{\IT^{\mu}}}$ when the last column of $[\lambda]$ is no shorter than the first column of $[\mu]$, which is equivalent to either $\add{\IT^{\lambda}}{\IT^{\mu}} \in \STab(\lambda+\mu)$ or $\add{\IT^{\lambda}}{\IT^{\mu}} = \IT_{\lambda+\mu}$.

\begin{thm} \label{thm:denominator}
Let $\lambda \vdash n$ and $\mu \vdash m$, and suppose that the last column of $[\lambda]$ is no shorter than the first column of $[\mu]$.  Then $$\theta_{\lambda,\mu} = \frac{\phl^{\lambda}\phl^{\mu}}{\dd_{\add{\IT^{\lambda}}{\IT^{\mu}}}},$$ where
$\dd_{\add{\IT^{\lambda}}{\IT^{\mu}}}\in \ZZ^+$ is the denominator of $f_{\add{\IT^{\lambda}}{\IT^{\mu}}}$ (see Lemma \ref{lem:denominator-young-basis}).

In particular, $\iota_{\lambda,\mu}$ (when $N\geq n+m$) and $\jmath_{\lambda,\mu}$ admit a splitting defined over $\ZP$ if and only if $p \nmid \frac{\phl^{\lambda+\mu} \dd_{\add{\IT^{\lambda}}{\IT^{\mu}}}}{\phl^{\lambda} \phl^{\mu}}$.
\end{thm}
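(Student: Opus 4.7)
The plan is to first prove the gcd identity
\[
\theta_{\lambda,\mu} \;=\; \frac{\phl^{\lambda}\phl^{\mu}}{\dd_{\add{\IT^\lambda}{\IT^\mu}}},
\]
after which the ``in particular'' statement follows instantly by substitution into Theorem~\ref{thm:split-condition-main} and Proposition~\ref{prop:split-condition-Specht}: the split condition $p\nmid \phl^{\lambda+\mu}/\theta_{\lambda,\mu}$ becomes precisely $p\nmid \phl^{\lambda+\mu}\dd_{\add{\IT^\lambda}{\IT^\mu}}/(\phl^\lambda\phl^\mu)$.

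Set $\IT := \add{\IT^\lambda}{\IT^\mu}$. By Lemma~\ref{lem:lem-for-product-of-Young-symmetrizers}(1) we have $\theta_{\lambda,\mu}=\gcd_{\ZZ\sym{n+m}}(Y_{\IT^\lambda}Y_{(\IT^\mu)^{+n}}Y_\IT)$. Using Theorem~\ref{thm:basics-on-young-basis}(5) to write $Y_{\IT^\lambda}=f_{\IT^\lambda}\sigma_\lambda$ and $Y_{(\IT^\mu)^{+n}}=(f_{\IT^\mu})^{+n}\sigma_\mu^{+n}$, together with the fact that $\sigma_\lambda\in\sym{n}$ commutes with everything in $\QQ\sym{n}^{+n}$, gives
\[
Y_{\IT^\lambda}Y_{(\IT^\mu)^{+n}}Y_\IT \;=\; f_{\IT^\lambda}(f_{\IT^\mu})^{+n}\cdot \rho Y_\IT, \qquad \rho := \sigma_\lambda\sigma_\mu^{+n}.
\]
A brief combinatorial check, using that the column condition forces the columns of $[\lambda+\mu]$ to split cleanly into those of $[\lambda]$ and $[\mu]$, shows $\rho\cdot\IT = \add{\IT_\lambda}{\IT_\mu} = \IT_{\lambda+\mu}$; hence $\rho Y_\IT = Y_{\IT_{\lambda+\mu}}\rho = \sigma_{\lambda+\mu}f_{\IT^{\lambda+\mu}}\rho$ by another application of Theorem~\ref{thm:basics-on-young-basis}(5).

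The crucial step is then to establish the factorization
\[
f_{\IT^\lambda}(f_{\IT^\mu})^{+n}\,\sigma_{\lambda+\mu}\,f_{\IT^{\lambda+\mu}} \;=\; \phl^{\lambda}\phl^{\mu}\cdot f_\IT\cdot w
\]
for some $w\in\sym{n+m}$. The input is the pair of projection identities
\[
f_{\IT^\lambda,\IT^\lambda}\cdot f_\IT = \gamma_{\IT^\lambda}f_\IT,\qquad (f_{\IT^\mu,\IT^\mu})^{+n}\cdot f_\IT = \gamma_{\IT^\mu}f_\IT,
\]
which are immediate consequences of Proposition~\ref{prop:products-of-young-basis-of-different-size}(1),(2) (extended by linearity to $\QQ\sym{m}$) and the column-condition observations that $\IT{\downarrow_n}=\IT^\lambda$ and that the ``upper'' subtableau of $\IT$ on entries $n+1,\ldots,n+m$ is $(\IT^\mu)^{+n}$. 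One then converts these identities for the diagonal seminormal basis elements $f_{\IT^\bullet,\IT^\bullet}$ into a statement about the elements $f_{\IT^\bullet}=Y_{\IT^\bullet}\sigma_\bullet^{-1}$ (appearing naturally from the display above) via Theorem~\ref{thm:basics-on-young-basis}(1), (3), (5); the products $\gamma_{\IT^\bullet}\gamma_{\IT^{\bullet'}}=\phl^\bullet$ then combine to yield the scalar $\phl^\lambda\phl^\mu$, while the residual permutations $\sigma_\lambda,\sigma_\mu,\sigma_{\lambda+\mu}$ are absorbed into~$w$.

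Taking $\gcd_{\ZZ\sym{n+m}}$ of the resulting identity $Y_{\IT^\lambda}Y_{(\IT^\mu)^{+n}}Y_\IT = \phl^\lambda\phl^\mu\,f_\IT\,w\rho$ and using Lemma~\ref{lem:basic-gcds}(2) together with Lemma~\ref{lem:denominator-young-basis} (noting that $w\rho\in\sym{n+m}$ does not alter the gcd), we get $\theta_{\lambda,\mu}=\phl^\lambda\phl^\mu\cdot\gcd_{\ZZ\sym{n+m}}(f_\IT)=\phl^\lambda\phl^\mu/\dd_\IT$, as required.

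The main obstacle is the crucial step: translating the clean projection identities for the $f_{\IT^\bullet,\IT^\bullet}$, where Proposition~\ref{prop:products-of-young-basis-of-different-size} applies directly, into the corresponding factorization for the combination $f_{\IT^\lambda}(f_{\IT^\mu})^{+n}\sigma_{\lambda+\mu}f_{\IT^{\lambda+\mu}}$ that appears from Theorem~\ref{thm:basics-on-young-basis}(5). This requires careful bookkeeping of every $\gamma$-scalar and every occurrence of $\sigma_\lambda,\sigma_\mu,\sigma_{\lambda+\mu}$ so as to pin down both the scalar $\phl^\lambda\phl^\mu$ and the permutation $w$ precisely; everything else in the argument is essentially formal.
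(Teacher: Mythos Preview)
Your overall strategy coincides with the paper's: reduce $\theta_{\lambda,\mu}$ to a product of the form $f_{\IT^\lambda}(f_{\IT^\mu})^{+n}\sigma_{\lambda+\mu}f_{\IT^{\lambda+\mu}}$ (up to a harmless permutation on one side), show this equals $\phl^\lambda\phl^\mu f_{\add{\IT^\lambda}{\IT^\mu}}$, and take the gcd. The ``in particular'' clause then follows exactly as you say. Your combinatorial observation $\rho\cdot(\add{\IT^\lambda}{\IT^\mu})=\IT_{\lambda+\mu}$ is correct and is implicitly used in the paper as well (it is what makes $\add{\IT_\lambda}{\IT_\mu}=\IT_{\lambda+\mu}$).

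The one place where your sketch is imprecise is the ``crucial step''. The projection identities you write down are for $f_{\IT^\lambda,\IT^\lambda}$ and $(f_{\IT^\mu,\IT^\mu})^{+n}$ acting on $f_\IT$, but the objects that actually appear after applying Theorem~\ref{thm:basics-on-young-basis}(5) are $f_{\IT^\lambda}=\gamma_{\IT^{\lambda'}}f_{\IT^\lambda,\IT_\lambda}$ and its $\mu$-analogue, i.e.\ the \emph{second} subscript is $\IT_\lambda$, not $\IT^\lambda$; moreover the third factor is $\sigma_{\lambda+\mu}f_{\IT^{\lambda+\mu}}$, not $f_\IT$. The paper avoids the conversion problem you flag by starting from $\s=\IT_\lambda$, $\t=\IT_\mu$ (allowed by Lemma~\ref{lem:lem-for-product-of-Young-symmetrizers}(1)) and then proceeding in three clean moves: first expand $\sigma_{\lambda+\mu}f_{\IT^{\lambda+\mu},\IT_{\lambda+\mu}}$ via Proposition~\ref{prop:products-of-young-basis-of-different-size}(3) as $f_{\IT_{\lambda+\mu},\IT_{\lambda+\mu}}+\sum_{\mathfrak r\neq\IT_{\lambda+\mu}}a_{\mathfrak r}f_{\mathfrak r,\IT_{\lambda+\mu}}$; then apply $f_{\IT^\lambda,\IT_\lambda}$ using part~(1) with $\mathfrak v=\IT_\lambda$, which kills every term whose restriction to $\{1,\dots,n\}$ is not $\IT_\lambda$ and turns the survivors into $\gamma_{\IT_\lambda}f_{\add{\IT^\lambda}{\mathfrak p},\IT_{\lambda+\mu}}$; finally apply $(f_{\IT^\mu,\IT_\mu})^{+n}$ using part~(2) together with Theorem~\ref{thm:basics-on-young-basis}(3), which singles out the term with $\mathfrak p=\IT_\mu$ and yields the scalar $\gamma_{\IT_\mu}$. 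The four $\gamma$'s pair off via Theorem~\ref{thm:basics-on-young-basis}(1) to give exactly $\phl^\lambda\phl^\mu$, and one finds $w=1$ (so your residual permutation is just $\rho$). If you adopt this ordering of the argument, the ``bookkeeping'' you worry about disappears.
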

	

\begin{proof}
Let $\s = \IT_{\lambda}$ and $\t = \IT_{\mu}$.  Then $\add{\s}{\t} = \IT_{\lambda + \mu}$.
By Theorem \ref{thm:basics-on-young-basis}(5), we have
 \begin{align*}
 Y_{\s} &=
 \gamma_{\IT^{\lambda'}}\sigma_\lambda f_{\IT^\lambda,\IT_\lambda}; \\
 Y_{\t} &=
 \gamma_{\IT^{\mu'}}\sigma_\mu f_{\IT^\mu,\IT_\mu}; \\
 Y_{\add{\s}{\t}} = Y_{\IT_{\lambda+\mu}}&=
 \gamma_{\IT^{(\lambda+\mu)'}}\sigma_{\lambda+\mu} f_{\IT^{\lambda+\mu},\IT_{\lambda+\mu}}.
 \end{align*}
 Thus, we have, by the various parts of Proposition \ref{prop:products-of-young-basis-of-different-size} as indicated below,
 \begin{alignat*}{2}
 Y_{\s}Y_{\t^{+n}}Y_{\add{\s}{\t}}  &= Y_{\t^{+n}} Y_{\s} Y_{\add{\s}{\t}} \\
 &=
 \gamma_{\IT^{\mu'}} \gamma_{\IT^{\lambda'}}\gamma_{\IT^{(\lambda+\mu)'}}  (\sigma_{\mu} f_{\IT^{\mu},\IT_{\mu}})^{+n} \sigma_\lambda
 f_{\IT^\lambda,\IT_\lambda}\sigma_{\lambda+\mu}
 f_{\IT^{\lambda+\mu},\IT_{\lambda+\mu}} \\
 &=\gamma_{\IT^{\mu'}} \gamma_{\IT^{\lambda'}}\gamma_{\IT^{(\lambda+\mu)'}}  (\sigma_{\mu} f_{\IT^{\mu},\IT_{\mu}})^{+n} \sigma_\lambda
 f_{\IT^\lambda,\IT_\lambda} \left(f_{\IT_{\lambda+\mu}, \IT_{\lambda+\mu}} + \sum_{\mathfrak{r} \ne \t_{\lambda+\mu}} a_{\mathfrak{r}}f_{\mathfrak{r},\IT_{\lambda+\mu}}\right) & &\text{(by (3))} \\
 &=\gamma_{\IT^{\mu'}}\gamma_{\IT^{\lambda'}}\gamma_{\IT^{(\lambda+\mu)'}} \gamma_{\IT_\lambda}
 \sigma_\lambda \sigma_{\mu}^{+n} (f_{\IT^{\mu},\IT_{\mu}})^{+n} \left(f_{\add{\IT^{\lambda}}{\IT_{\mu}}, \IT_{\lambda+\mu}} + \sum_{\mathfrak{p} \ne \IT_{\mu}} a_{\add{\IT^{\lambda}}{\mathfrak{p}}}f_{\add{\IT^{\lambda}}{\mathfrak{p}},\IT_{\lambda+\mu}}\right) &\:\:&\text{(by (1))} \\
 &= \gamma_{\IT^{\mu'}}\gamma_{\IT^{\lambda'}}\gamma_{\IT^{(\lambda+\mu)'}} \gamma_{\IT_\lambda} \gamma_{\IT_{\mu}}
 \sigma_\lambda \sigma_{\mu}^{+n} f_{\add{\IT^{\lambda}}{\IT^{\mu}}, \IT_{\lambda+\mu}} \quad \text{(by (2) and Theorem \ref{thm:basics-on-young-basis}(3))} \\
 &= \phl^{\lambda} \phl^{\mu} \sigma_{\lambda}\sigma^{+n}_{\mu} f_{\add{\IT^{\lambda}}{\IT^{\mu}}} \qquad \qquad \qquad \qquad \;\;\: \text{(by Theorem  \ref{thm:basics-on-young-basis}(1))}.
 \end{alignat*}
 Thus, we have, by Lemmas \ref{lem:basic-gcds}(2) and \ref{lem:denominator-young-basis},
 \begin{align*}
 \theta_{\lambda,\mu} = \gcd\nolimits_{\ZZ\sym{n+m}}(Y_{\s}Y_{\t^{+n}}Y_{\add{\s}{\t}}) &= \gcd\nolimits_{\ZZ\sym{n+m}}(\phl^{\lambda} \phl^{\mu} \sigma_{\lambda}\sigma^{+n}_{\mu} f_{\add{\IT^{\lambda}}{\IT^{\mu}}}) \\
 &= \phl^{\lambda} \phl^{\mu} \gcd\nolimits_{\ZZ\sym{n+m}}(f_{\add{\IT^{\lambda}}{\IT^{\mu}}}) = \frac{\phl^{\lambda}\phl^{\mu}}{\dd_{\add{\IT^{\lambda}}{\IT^{\mu}}}}.
 \end{align*}
The last assertion now follows from Theorem \ref{thm:split-condition-main} and Proposition \ref{prop:split-condition-Specht}.
\end{proof}

Since $\dd_{\add{\IT^{\lambda}}{\IT^{\mu}}} \in \ZZ^+$ by Lemma \ref{lem:denominator-young-basis}, we have the following immediately corollary, when combined with Lemma \ref{lem:lem-for-product-of-Young-symmetrizers}(3).

\begin{cor} \label{cor:theta-gcd}
Suppose that the last column of $[\lambda]$ is no shorter than the first column of $[\mu]$.  Then $\theta_{\lambda,\mu} \mid \gcd(\phl^{\lambda}\phl^{\mu},\phl^{\lambda+\mu})$.
\end{cor}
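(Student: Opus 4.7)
The plan is to combine the two divisibility facts already at our disposal. From Theorem \ref{thm:denominator}, under the hypothesis on the shapes of $[\lambda]$ and $[\mu]$, we have the clean formula
\[
\theta_{\lambda,\mu} \;=\; \frac{\phl^{\lambda}\phl^{\mu}}{\dd_{\add{\IT^{\lambda}}{\IT^{\mu}}}}.
\]
Since $\dd_{\add{\IT^{\lambda}}{\IT^{\mu}}} \in \ZZ^+$ by Lemma \ref{lem:denominator-young-basis}, and since $\theta_{\lambda,\mu} \in \ZZ^+$ by definition, the identity above immediately shows $\theta_{\lambda,\mu} \mid \phl^{\lambda}\phl^{\mu}$.

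On the other hand, Lemma \ref{lem:lem-for-product-of-Young-symmetrizers}(3) gives the unconditional divisibility $\theta_{\lambda,\mu} \mid \phl^{\lambda+\mu}$. Combining these two divisibility relations using the universal property of the gcd of two integers yields $\theta_{\lambda,\mu} \mid \gcd(\phl^{\lambda}\phl^{\mu},\phl^{\lambda+\mu})$, as desired.

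There is essentially no obstacle here: the work was already done in proving Theorem \ref{thm:denominator} (where the denominator formula for $\theta_{\lambda,\mu}$ is established via the Young's seminormal basis machinery) and in Lemma \ref{lem:lem-for-product-of-Young-symmetrizers}(3) (where the divisibility by the hook product $\phl^{\lambda+\mu}$ follows from the idempotent-up-to-scalar identity $Y_{\add{\s}{\t}}^2 = \phl^{\lambda+\mu}Y_{\add{\s}{\t}}$ together with the factorisation $Y_{\add{\s}{\t}} = \{\Gamma\}Y_{\s}Y_{\t^{+n}}$). The corollary is thus a one-line deduction, and I would present it as such.
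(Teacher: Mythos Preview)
Your proof is correct and follows exactly the paper's approach: invoke Theorem \ref{thm:denominator} together with $\dd_{\add{\IT^{\lambda}}{\IT^{\mu}}} \in \ZZ^+$ from Lemma \ref{lem:denominator-young-basis} to get $\theta_{\lambda,\mu} \mid \phl^{\lambda}\phl^{\mu}$, and combine with Lemma \ref{lem:lem-for-product-of-Young-symmetrizers}(3) for $\theta_{\lambda,\mu} \mid \phl^{\lambda+\mu}$.
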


\section{Some examples} \label{sec:examples}

Perhaps to be expected, the computation of $\theta_{\lambda,\mu}$ is difficult in general.  The only work (of which we are aware) that relates to the computation of $Y_{\s}Y_{\t^{+n}}Y_{\add{\s}{\t}}$ is by Raicu \cite[Theorem 1.2]{Raicu14}, in which he provides a simplified way of evaluating $Y_{\s}Y_{\t^{+n}}Y_{\add{\s}{\t}}$ when $\t$ is the unique $(1)$-tableau.  However, it is not clear how one can deduce $\theta_{\lambda,(1)}$ immediately from his result.

In this concluding section, we provide closed formulas for $\theta_{\lambda,\mu}$ for two `easy' cases, in which we compute explicitly the product $Y_{\IT^{\lambda}}Y_{(\IT^{\mu})^{+n}}Y_{\add{\IT^{\lambda}}{\IT^{\mu}}}$ and obtain the greatest common divisor of its coefficients.

We shall use the following notation in this section: for $a,b \in \mathbb{Z}$ with $a\leq b$, we write $[a,b]$ for the integer interval  between $a$ and $b$ (both inclusive), i.e.
$$ [a,b] := \{ c \in \ZZ \mid a \leq c \leq b \}.$$



\subsection{The case $\lambda = (1^n)$ and $\mu = (m)$}

We have $Y_{\IT^{\lambda}} = [ \sym{n} ]$, $Y_{\IT^{\mu}} = \{\sym{m}\}$ and $Y_{\add{\IT^{\lambda}}{\IT^{\mu}}} = \{\sym{\{1\} \cup [n+1,n+m]}\}[\sym{n}]$.
Let $\gamma_0 = 1_{\sym{n+m}}$, the identity of $\sym{n+m}$, and let $\gamma_i = (1, n+i)$ for all $1\leq i \leq m$.
Then $\Gamma := \{ \gamma_0,\dotsc, \gamma_m\}$ is a transversal of $\sym{m}^{+n}$ in $\sym{\{1\} \cup [n+1,n+m]}$.  Thus,
\begin{align*}
Y_{\IT^{\lambda}} Y_{(\IT^{\mu})^{+n}} Y_{\add{\IT^{\lambda}}{\IT^{\mu}}} &= [ \sym{n} ] \{\sym{m}^{+n}\}\{\sym{\{1\} \cup [n+1,n+m]}\}[\sym{n}] \\
&= \sum_{\substack{\sigma_1,\sigma_2 \in \sym{n} \\ \tau_1,\tau_2 \in \sym{m}}} \sum_{i=0}^m \sgn(\sigma_1\sigma_2) \sigma_1 \tau_1^{+n} \gamma_i \tau_2^{+n} \sigma_2.
\end{align*}
Write $Y_{\IT^{\lambda}} Y_{(\IT^{\mu})^{+n}} Y_{\add{\IT^{\lambda}}{\IT^{\mu}}} = \sum_{\rho \in \sym{n+m}} c_{\rho} \rho$; then $c_{\rho} = \sum \sgn(\sigma_1\sigma_2)$ where the sum runs over all $\sigma_1,\sigma_2 \in \sym{n}$, $\tau_1,\tau_2 \in\sym{m}$ and $\gamma_i \in \Gamma$ such that $\sigma_1\tau_1^{+n} \gamma_i \tau_2^{+n}\sigma_2 = \rho$.

Fix $i \in [1,m]$, and let $H = \sym{[2, n]}$ and $K_i = \sym{[n+1,n+m] \setminus \{n+i\}}$.  For $a \in [1,n]$ and $b \in [1,m]$, define $\alpha_a = (1,a) $ and $\beta_b = (n+i, n+b)$ (where $(1,1)$ and $(n+i,n+i)$ are to be read as $1_{\sym{n+m}}$).  Then $\{ \alpha_1,\dotsc, \alpha_n \}$ is a transversal of $H$ in $\sym{n}$ while $\{ \beta_1,\dotsc, \beta_m \}$ is a transversal of $K_i$ in $\sym{m}^{+n}$, so that
$$
\sym{n}\sym{m}^{+n} \gamma_i \sym{m}^{+n} \sym{n} = \bigcup_{a=1}^n \bigcup_{b=1}^m \sym{n}\beta_b K_i \gamma_i \sym{m}^{+n} H \alpha_a.
$$
Furthermore, we clearly have
$$
C_{a,b} := \sym{n}\beta_b K_i \gamma_i \sym{m}^{+n} H \alpha_a \subseteq \{ \rho \in \sym{n+m} \mid \rho([1,n]\setminus \{a\}) \subseteq [1,n],\ \rho(a) =n+b \};
$$
in particular, the $C_{a,b}$'s are pairwise disjoint. We claim that the above inequality is in fact an equality. If $\rho \in \sym{n+m}$ such that $\rho(j) \in [1,n]$ for all $j \in [1,n] \setminus \{a\}$, and $\rho(a) = n+b$, let $a'\in [1,n]$ be the unique element such that $a' \notin \rho([1,n])$.  Then $\gamma_i \beta_b \alpha_{a'} \rho \alpha_a \in\sym{n+m}$ sends $[1,n]$ to $[1,n]$, and fixes $1$, so that $\gamma_i \beta_b \alpha_{a'} \rho \alpha_a =  \tau_{\rho,i}^{+n}h_{\rho,i}$ for some unique $h_{\rho,i} \in H$ and $\tau_{\rho,i} \in \sym{m}$.  Thus, $\rho = \alpha_{a'} \beta_b\gamma_i  \tau_{\rho,i}^{+n} h_{\rho,i}\alpha_a \in \sym{n}\beta_b K_i \gamma_i \sym{m}^{+n} H \alpha_a$. This proves the claim.
In particular, this justifies our notation $C_{a,b}$ which is independent of $i\in [1,m]$.
Now, let $\rho \in C_{a,b}$.  Then we have seen above that there exist unique $a' \in [1,n]$, $h_{\rho,i} \in H$ and $\tau_{\rho,i} \in \sym{m}$ such that $\rho = \alpha_{a'} \beta_b\gamma_i  \tau_{\rho,i}^{+n} h_{\rho,i}\alpha_a$.  For any $\sigma \in \sym{n}$, $\kappa \in K_i$, $\tau \in \sym{m}$ and $h \in H$, we have
\begin{align*}
\alpha_{a'} \beta_b\gamma_i  \tau_{\rho,i}^{+n} h_{\rho,i}\alpha_a = \rho = \sigma \beta_b \kappa \gamma_i \tau^{+n} h \alpha_a &\iff \alpha_{a'} \gamma_i  \tau_{\rho,i}^{+n} h_{\rho,i} = \sigma \kappa \gamma_i \tau^{+n} h \\
&\iff \alpha_{a'} h_{\rho,i} \gamma_i  \tau_{\rho,i}^{+n} = \sigma h \gamma_i \kappa \tau^{+n} \\
&\iff (\sigma h)^{-1}\alpha_{a'}h_{\rho,i} = \gamma_i (\kappa (\tau\tau_{\rho,i}^{-1})^{+n}) \gamma_i^{-1} \\
&\iff (\sigma h)^{-1}\alpha_{a'}h_{\rho,i} = \gamma_i (\kappa (\tau\tau_{\rho,i}^{-1})^{+n}) \gamma_i^{-1} = 1_{\sym{n+m}} \\
&\iff \sigma = \alpha_{a'} h_{\rho,i}h^{-1} \text{ and } \tau^{+n} = \kappa \tau_{\rho,i}^{+n},
\end{align*}
where the penultimate line holds because the lefthand side of the previous line has support a subset of $[1,n]$ while the righthand side has support a subset of $\{1 \} \cup [n+1,n+m]$.  Thus, exactly $|K_i||H| = (m-1)!(n-1)!$ such quadruples will contribute to $c_\rho$, with each contributing $\sgn(\sigma h\alpha_a) = \sgn(\alpha_{a'}h_{\rho,i} \alpha_{a}) = \sgn(h_{\rho,i})$.

For $i,j \in [1,m]$ and $\rho \in C_{a,b}$, we have $\alpha_{a'} \beta_b\gamma_i  \tau_{\rho,i}^{+n} h_{\rho,i}\alpha_a = \rho = \alpha_{a'} \beta_b\gamma_j  \tau_{\rho,j}^{+n} h_{\rho,j}\alpha_a$, giving $h_{\rho,i} h_{\rho,j}^{-1} = (\tau_{\rho,i}^{-1})^{+n} \gamma_i \gamma_j \tau_{\rho,j}^{+n}$, which has to be the identity since the two expressions have disjoint support.  Thus, $h_{\rho,i} = h_{\rho,j}$, which we shall now denote as $h_{\rho}$.  Since, for each $i \in [1,m]$, we have exactly $(m-1)!(n-1)!$ contributions to $c_{\rho}$, with each contributing $\sgn(h_{\rho,i}) = \sgn(h_{\rho})$, and since $\sym{n}\sym{m}^{+n} \gamma_0 \sym{m}^{+n}\sym{n} = \sym{n}\sym{m}^{+n}$ is disjoint from the $C_{a,b}$'s, we conclude that $c_{\rho} = m!(n-1)! \sgn(h_{\rho})$.

Finally, for $\rho \in \sym{n}\sym{m}^{+n}$, and $\sigma_1,\sigma_2 \in \sym{n}$ and $\tau_1,\tau_2 \in \sym{m}$, we have
\begin{align*}
\rho = \sigma_1 \tau_1^{+n} \tau_2^{+n} \sigma_2 \iff \sigma_1 \sigma_2 = \sigma_{\rho} \text{ and } \tau_1 \tau_2 = \tau_{\rho},
\end{align*}
where $\sigma_{\rho} \in \sym{n}$ and $\tau_{\rho} \in \sym{m}$ are the unique elements such that $\rho = \sigma_{\rho}\tau_{\rho}^{+n}$.  Thus, there are exactly $|\sym{n}||\sym{m}| = n!m!$ contributions to $c_{\rho}$, with each contributing $\sgn(\sigma_1\sigma_2) = \sgn(\sigma_{\rho})$.

We have therefore shown that:

\begin{prop} \label{prop:(1^n)(m)}
Let $\lambda = (1^n)$ and $\mu = (m)$.  Then
$$
Y_{\IT^{\lambda}} Y_{(\IT^{\mu})^{+n}} Y_{\add{\IT^{\lambda}}{\IT^{\mu}}} =
\sum_{\rho \in \sym{n}\sym{m}^{+n}} m!n!\sgn(\sigma_{\rho}) \rho + \sum_{a=1}^n \sum_{b=1}^m \sum_{\rho \in C_{a,b}} m!(n-1)!\sgn(h_{\rho}) \rho.
$$
In particular, $\theta_{(1^n),(m)} = m!(n-1)!$.
\end{prop}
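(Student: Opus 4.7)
The plan is to expand the product $Y_{\IT^\lambda} Y_{(\IT^\mu)^{+n}} Y_{\add{\IT^\lambda}{\IT^\mu}}$ explicitly as a $\ZZ$-linear combination of elements of $\sym{n+m}$ and then take the greatest common divisor of its coefficients. Since $\lambda = (1^n)$ has trivial row stabilizer and $\mu = (m)$ has trivial column stabilizer, the first two Young symmetrizers collapse to $[\sym{n}]$ and $\{\sym{m}^{+n}\}$ respectively. The tableau $\add{\IT^\lambda}{\IT^\mu}$ has first row $\{1\}\cup[n+1, n+m]$ and first column $[1, n]$, so $Y_{\add{\IT^\lambda}{\IT^\mu}} = \{\sym{\{1\}\cup[n+1, n+m]}\}[\sym{n}]$. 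The coefficient of any $\rho\in\sym{n+m}$ will therefore be a signed count of factorisations of $\rho$ of the form $\sigma_1 \tau_1^{+n}\gamma\tau_2^{+n}\sigma_2$ with $\sigma_j\in\sym{n}$, $\tau_j\in\sym{m}$, and $\gamma$ running through a transversal $\Gamma$ of $\sym{m}^{+n}$ in $\sym{\{1\}\cup[n+1, n+m]}$.

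I would use the natural choice $\Gamma = \{\gamma_0 = 1,\ \gamma_i = (1, n+i)\ (1\leq i\leq m)\}$ and treat the $\gamma_0$ and the $\gamma_i$ ($i\geq 1$) pieces separately. The $\gamma_0$ contribution is supported on $\sym{n}\sym{m}^{+n}$, where every $\rho$ factors uniquely as $\sigma_\rho\tau_\rho^{+n}$; counting the quadruples $(\sigma_1, \sigma_2, \tau_1, \tau_2)$ with $\sigma_1\sigma_2 = \sigma_\rho$ and $\tau_1\tau_2 = \tau_\rho$ yields $n!m!$ solutions, each with sign $\sgn(\sigma_\rho)$. For each $\gamma_i$ with $i\geq 1$, I would identify the support as the set of permutations $\rho$ sending exactly one $a\in[1, n]$ to some $n+b$ and otherwise stabilising $[1, n]$ setwise, and partition this support into pairwise disjoint cells $C_{a, b}$ which turn out to be independent of $i$. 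Choosing transversals $\alpha_{a'}$ for $\sym{[2, n]}$ in $\sym{n}$ and $\beta_b$ for $\sym{[n+1, n+m]\setminus\{n+i\}}$ in $\sym{m}^{+n}$ produces a unique normal form $\rho = \alpha_{a'}\beta_b\gamma_i \tau_{\rho, i}^{+n} h_{\rho, i}\alpha_a$ for each $\rho\in C_{a, b}$; the residual freedom in the factorisation then contributes $(m-1)!(n-1)!\sgn(h_{\rho, i})$ per value of $i$.

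The main obstacle is to ensure the $m$ contributions from $i=1, \ldots, m$ reinforce rather than cancel, i.e.\ that $\sgn(h_{\rho, i})$ is independent of $i$. Equating the two normal forms for $\rho$ at $i$ and $j$ forces $h_{\rho, i}h_{\rho, j}^{-1} = (\tau_{\rho, i}^{-1})^{+n}\gamma_i\gamma_j\tau_{\rho, j}^{+n}$; the left side has support in $[2, n]$ while the right side has support in $\{1\}\cup[n+1, n+m]$, and since these are disjoint, both sides must equal the identity. Hence $h_{\rho, i} = h_\rho$ depends only on $\rho$, so summing the $m$ contributions gives coefficient $m!(n-1)!\sgn(h_\rho)$ on each $C_{a, b}$, yielding the displayed expansion. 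Finally, since both $\pm m!n!$ and $\pm m!(n-1)!$ actually occur (take $\rho = 1_{\sym{n+m}}$ for the former and $\rho = (1, n+1)$ for the latter), and $\gcd(m!n!,\ m!(n-1)!) = m!(n-1)!$, we conclude $\theta_{(1^n),(m)} = m!(n-1)!$.
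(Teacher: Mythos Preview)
Your proposal is correct and follows essentially the same argument as the paper: the same transversal $\Gamma=\{1,(1,n+1),\dotsc,(1,n+m)\}$, the same cell decomposition $C_{a,b}$, the same normal form via the transversals $\alpha_a$ and $\beta_b$, and the same disjoint-support trick to show $h_{\rho,i}$ is independent of $i$. The only addition is that you explicitly exhibit $\rho=1$ and $\rho=(1,n+1)$ to witness both coefficient values, which the paper leaves implicit.
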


We thus conclude from Proposition \ref{prop:(1^n)(m)}, Theorem \ref{thm:split-condition-main} and Proposition \ref{prop:split-condition-Specht} that $\iota_{(1^n),(m)}$ (when $N\geq n+m$) and $\jmath_{(1^n),(m)}$ admit a splitting defined over $\ZP$ if and only if $p \nmid \frac{\phl^{(m+1,1^{n-1})}}{m!(n-1)!} = n+m$.

\begin{rem}
It is perhaps not surprising that $p \nmid (n+m)$ is sufficient for the splitting of $\iota_{(1^n),(m)}$, as one may draw the same conclusion by considering the Weyl filtration of $\Delta(\lambda) \otimes \Delta(\mu)$ obtained from the Littlewood-Richardson's rule and the block(s) in which the Weyl factors, in particular $\Delta(\lambda+\mu)$, lie.  Our results show that the condition is also necessary, which agrees with \cite[Theorem 1.4(i)(c)]{GLOW17}.
\end{rem}

\subsection{The case $\lambda = (k,\ell)$ and $\mu = (m)$}

Let $\s = \IT^{\lambda}$ and $\t = \IT^{\mu}$, and let
$$
A_1 := [1,k]; \qquad A_2 := [k+1,k+\ell]; \qquad A_3 := [k+\ell+1,k+\ell+m].$$
Then $C_{\add{\s}{\t}} = C_{\s} = \left< (j-k,j) \mid j \in A_2 \right>$, an elementary Abelian $2$-group of rank $\ell$, which implies in particular that $\kappa = \kappa^{-1}$ for all $\kappa \in C_{\s}$, a fact that we shall use repeatedly in what follows without further comment, while $R_{\add{\s}{\t}} = \sym{A_1 \cup A_3}\sym{A_2}$.  Denote the subgroup $R_{\s}R_{\t^{+(k+\ell)}} = \sym{k}\sym{\ell}^{+k}\sym{m}^{+(k+\ell)} =\sym{A_1} \sym{A_2}\sym{A_3}$ by $R_{\s \times \t}$. Then $$Y_{\s}Y_{\t^{+(k+\ell)}} Y_{\add{\s}{\t}} = \{R_{\s}\}[C_{\s}] \{R_{\t^{+(k+\ell)}} \} [C_{\t^{+(k+\ell)}}]\{ R_{\add{\s}{\t}} \} [C_{\add{\s}{\t}}] = \{ R_{\s \times \t} \} [C_{\s}] \{ R_{\add{\s}{\t}} \}[C_{\s}].$$
Let $\m : R_{\s\times\t} \times C_{\s} \times R_{\add{\s}{\t}} \times C_{\s} \to \sym{k+\ell+m}$ be defined by $\m (\rho, \kappa, \rho', \kappa') = \rho \kappa \rho'\kappa'$.  Then the image of $\m$ is precisely $R_{\s\times\t}C_{\s}R_{\add{\s}{\t}}C_{\s}$, and so $Y_{\s}Y_{\t^{+(k+\ell)}} Y_{\add{\s}{\t}} = \sum_{\sigma \in R_{\s\times \t}C_{\s}R_{\add{\s}{\t}}C_{\s}} a_{\sigma}\sigma$, where
$$
a_{\sigma} = \sum_{(\rho, \kappa, \rho', \kappa') \in \m^{-1}(\{\sigma\})} \sgn(\kappa\kappa').
$$

To continue, we will describe $R_{\s\times\t}C_{\s}R_{\add{\s}{\t}}C_{\s}$ first, then $\m^{-1}(\{\sigma\})$ for $\sigma \in R_{\s\times\t}C_{\s}R_{\add{\s}{\t}}C_{\s}$, and finally compute $a_{\sigma}$.

Denote the conjugacy class of $\sym{k+\ell+m}$ consisting of all permutations having cycle type $(2^\ell, 1^{k-\ell+m})$ by $\C$.
There is a unique element of $C_{\s}$ in $\C$, namely $\pi_{\s} := \prod_{j\in A_2} (j-k, j)$.
The symmetric group $\sym{k+\ell+m}$ acts naturally and transitively (from the left) on $\C$ by conjugation, i.e.\ $g \cdot \pi = g \pi g^{-1}$ for $g \in \sym{k+\ell+m}$ and $\pi \in \C$.  Under this action, the stabilizer of $\pi_{\s}$ is its centralizer in $\sym{k+\ell+m}$, namely $\Delta_2(\sym{\ell})\sym{[\ell+1, k]\cup A_3} C_{\s}$,  where
$$
\Delta_2(\sym{\ell}) = \{ \sigma \sigma^{+k} \mid \sigma \in \sym{\ell} \}.$$
Observe also that $R_{\add{\s}{\t}} \cdot \pi_{\s} =\{ \prod_{j\in A_2} (a_j, j) \mid \text{$a_j$'s distinct elements of $A_1\cup A_3$}\}$.


\begin{prop} \label{P:CRCR}
Let $\sigma \in \sym{k+\ell+m}$ and $\kappa \in C_{\s}$.  Then $\sigma\in R_{\s\times \t} C_{\s} R_{\add{\s}{\t}} \kappa$ if and only if
$\sigma\kappa (A_2) \subseteq A_1 \cup A_2$.
\end{prop}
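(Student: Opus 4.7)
The plan is to exploit how each of the three groups $R_{\s\times\t}$, $C_{\s}$, and $R_{\add{\s}{\t}}$ interacts with the blocks $A_1, A_2, A_3$. Recall the following immediate observations: elements of $R_{\s\times\t} = \sym{A_1}\sym{A_2}\sym{A_3}$ preserve each of $A_1$, $A_2$, $A_3$; elements of $C_{\s}$, being generated by the transpositions $(j-k,j)$ for $j \in A_2$, lie in $\sym{A_1 \cup A_2}$ and therefore satisfy $\kappa'(A_2) \subseteq A_1 \cup A_2$; and elements of $R_{\add{\s}{\t}} = \sym{A_1 \cup A_3}\sym{A_2}$ preserve $A_2$ setwise.

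For the forward direction, suppose $\sigma = \rho\kappa'\rho'\kappa$ with $\rho \in R_{\s\times\t}$, $\kappa' \in C_{\s}$, $\rho' \in R_{\add{\s}{\t}}$. Using $\kappa^2 = 1$, we rewrite this as $\sigma\kappa = \rho\kappa'\rho'$. Applying this to $A_2$ and chaining the three observations above gives
\[
\sigma\kappa(A_2) = \rho\kappa'\rho'(A_2) = \rho\kappa'(A_2) \subseteq \rho(A_1 \cup A_2) = A_1 \cup A_2,
\]
as desired.

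For the converse, set $\tau := \sigma\kappa$ and $T := \tau(A_2) \subseteq A_1 \cup A_2$, and let $s := |T \cap A_1|$, so that $|T \cap A_2| = \ell - s$. I would construct the required factors explicitly. Pick any subset $S \subseteq A_2$ with $|S| = s$ and set $\kappa' := \prod_{j \in S}(j-k,j) \in C_{\s}$; then $\kappa'(A_2) = (A_2 \setminus S) \cup (S-k)$ has exactly $s$ elements in $A_1$ and $\ell - s$ in $A_2$, matching the corresponding counts in $T$. Hence I can choose $\rho_1 \in \sym{A_1}$ with $\rho_1(S - k) = T \cap A_1$ and $\rho_2 \in \sym{A_2}$ with $\rho_2(A_2 \setminus S) = T \cap A_2$; setting $\rho := \rho_1 \rho_2 \in \sym{A_1}\sym{A_2} \subseteq R_{\s\times\t}$ gives $\rho\kappa'(A_2) = T$. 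Finally define $\rho' := (\rho\kappa')^{-1}\tau$. Then
\[
\rho'(A_2) = (\rho\kappa')^{-1}\tau(A_2) = (\rho\kappa')^{-1}(T) = A_2,
\]
so $\rho' \in \sym{A_1 \cup A_3}\sym{A_2} = R_{\add{\s}{\t}}$, and by construction $\sigma = \tau\kappa = \rho\kappa'\rho'\kappa \in R_{\s\times\t}C_{\s}R_{\add{\s}{\t}}\kappa$.

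I do not anticipate a genuine obstacle; the argument is purely combinatorial, and the only thing to keep straight is the block-bookkeeping. The heart of the matter is the cardinality matching $|T \cap A_1| + |T \cap A_2| = \ell$, which is exactly what the hypothesis $\sigma\kappa(A_2) \subseteq A_1 \cup A_2$ guarantees and what enables one to choose $\kappa'$ (via a subset $S$ of the right size) so that $\rho$, which permutes $A_1$ and $A_2$ independently, can complete the factorization.
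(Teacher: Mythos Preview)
Your proof is correct. The forward direction is the same as the paper's (``clear''). For the converse, however, you take a genuinely different and more elementary route.

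The paper's argument is indirect: it uses the transitive action of $\sym{k+\ell+m}$ by conjugation on the class $\C$ of permutations of cycle type $(2^{\ell},1^{k-\ell+m})$, with $\pi_{\s} = \prod_{j\in A_2}(j-k,j)$ as a basepoint. After pre- and post-composing $\sigma\kappa$ by suitable elements of $\sym{A_1}$ and $\sym{A_2}$, the authors arrange that the resulting element carries $\pi_{\s}$ into $R_{\add{\s}{\t}}\cdot\pi_{\s}$, and then invoke the description of $\mathrm{Stab}(\pi_{\s}) = \Delta_2(\sym{\ell})\,\sym{[\ell+1,k]\cup A_3}\,C_{\s}$ together with the fact that $R_{\add{\s}{\t}}\,\mathrm{Stab}(\pi_{\s}) = R_{\add{\s}{\t}}\,C_{\s}$ to conclude. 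Your argument, by contrast, directly builds the factor $\kappa'$ from a subset $S\subseteq A_2$ of the correct size $s=|\tau(A_2)\cap A_1|$ and then chooses $\rho\in\sym{A_1}\sym{A_2}$ blockwise so that $\rho\kappa'(A_2)=\tau(A_2)$; the remaining factor $\rho'=(\rho\kappa')^{-1}\tau$ then automatically preserves $A_2$ setwise and hence lies in $R_{\add{\s}{\t}}$. This avoids the conjugacy-class machinery entirely. What the paper's approach buys is a more structural viewpoint (tied to the centralizer of $\pi_{\s}$) that is reused in the surrounding computations; what yours buys is a shorter, self-contained argument that needs nothing beyond block bookkeeping.
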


\begin{proof}
The forward direction of the statement is clear.

Conversely, if $\sigma\kappa (A_2) \subseteq A_1 \cup A_2$, then we can find $\rho \in \sym{A_1}$ such that $\rho(\sigma\kappa(A_2)) \subseteq [1,\ell] \cup A_2$.  Let $\tau= (\rho\sigma \kappa)^{-1}$.  Then $\tau([1,\ell] \cup A_2)$ contains precisely $A_2$ and $\ell$ other integers in $A_1\cup A_3$.  Consider $\tau \cdot \pi_{\s} = \prod_{j\in A_2} (\tau(j-k), \tau(j))$.  Let
\begin{align*}
J &:= \{ j \in A_2 \mid \tau(j-k), \tau(j) \in A_2 \},\\
J' &:= \{ j \in A_2 \mid \tau(j-k), \tau(j) \notin A_2 \}.
\end{align*}
Then $|J| = |J'| =:r$.  Let $J = \{j_1,\dotsc, j_r\}$ and $J'= \{j'_1,\dotsc, j'_r \}$, and let $\rho_1 = \prod_{i=1}^r (j_i, j'_i) \in \sym{A_2}$. Then for each $j \in A_2$,  exactly one of $(\tau\rho_1)(j-k)$ and $(\tau\rho_1)(j)$ lie in $A_2$, so that
$$(\tau\rho_1) \cdot \pi_{\s} = \prod_{j\in A_2} ((\tau\rho_1)(j-k), (\tau\rho_1)(j)) \in R_{\add{\s}{\t}} \cdot \pi_{\s},$$
and hence $(\sigma\kappa)^{-1}\rho^{-1} \rho_1 = \tau \rho_1 \in R_{\add{\s}{\t}}\mathrm{Stab}_{\sym{k+\ell+m}}(\pi_{\s}) = R_{\add{\s}{\t}} C_{\s}$.  Consequently, $(\sigma\kappa)^{-1} \in R_{\add{\s}{\t}}C_{\s}R_{\s\times \t}$ as $\rho,\rho_1 \in R_{\s\times \t}$, or equivalently, $\sigma\kappa \in R_{\s\times \t}C_{\s}R_{\add{\s}{\t}}$ as desired.
\end{proof}

The following corollary provides a description for the set $R_{\s\times\t}C_{\s}R_{\add{\s}{\t}}C_{\s}$ as promised.

\begin{cor} \label{C:CRCR}
$R_{\s\times \t}C_{\s} R_{\add{\s}{\t}} C_{\s} = \{ \sigma \in \sym{k+\ell+m} \mid  \{\sigma(j-k), \sigma(j)\} \not\subseteq A_3 \ \forall j \in A_2 \}$.
\end{cor}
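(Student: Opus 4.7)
The plan is to deduce the corollary directly from Proposition \ref{P:CRCR} by taking a union over $\kappa \in C_{\s}$ and then unpacking what the condition $\sigma\kappa(A_2)\subseteq A_1\cup A_2$ means, one index at a time, using the elementary Abelian structure of $C_{\s}$.

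First I would write
\[
R_{\s\times\t}C_{\s}R_{\add{\s}{\t}}C_{\s}=\bigcup_{\kappa\in C_{\s}}R_{\s\times\t}C_{\s}R_{\add{\s}{\t}}\kappa,
\]
so by Proposition \ref{P:CRCR}, $\sigma$ lies in the left-hand side iff there exists $\kappa\in C_{\s}$ with $\sigma\kappa(A_2)\subseteq A_1\cup A_2$, equivalently, $\sigma\kappa(j)\notin A_3$ for every $j\in A_2$.

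Next I would use the explicit description $C_{\s}=\langle (j-k,j)\mid j\in A_2\rangle$: an element $\kappa\in C_{\s}$ is determined by a subset $S\subseteq A_2$, with $\kappa(j)=j-k$ and $\kappa(j-k)=j$ for $j\in S$, and $\kappa$ fixing $j-k$ and $j$ otherwise. In particular, for each $j\in A_2$, the value $\kappa(j)$ can be chosen independently to be either $j$ or $j-k$. Therefore ``there exists $\kappa\in C_{\s}$ with $\sigma\kappa(j)\notin A_3$ for all $j\in A_2$'' is equivalent to ``for every $j\in A_2$, at least one of $\sigma(j)$ or $\sigma(j-k)$ lies outside $A_3$,'' which is exactly the condition $\{\sigma(j-k),\sigma(j)\}\not\subseteq A_3$.

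There is no real obstacle here; the only subtle point is the ``independence'' observation in the previous paragraph, which lets us replace an existential quantifier over $\kappa\in C_{\s}$ by a conjunction of local conditions, one per $j\in A_2$. This is precisely what makes the two-sided expression $R_{\s\times\t}C_{\s}R_{\add{\s}{\t}}C_{\s}$ have such a clean description in terms of $\sigma$ alone.
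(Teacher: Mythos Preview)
Your proof is correct and follows essentially the same approach as the paper. The paper also reduces to Proposition \ref{P:CRCR}; the only cosmetic difference is that, for the reverse inclusion, the paper writes down the required $\kappa$ explicitly as $\kappa=\prod_{j\in A_2\cap\sigma^{-1}(A_3)}(j-k,j)$, which is precisely the element produced by your ``independent choice'' argument.
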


\begin{proof}
That the lefthand side is a subset of the righthand side follows immediately from Proposition \ref{P:CRCR}.
For the converse, let $\sigma$ be an element of the righthand side, and let $\kappa := \prod_{j \in A_2 \cap \sigma^{-1}(A_3)} (j-k, j) \in C_{\s}$.  Then $\sigma\kappa(j) \notin A_3$ for all $j \in A_2$, and thus $\sigma \in R_{\s \times \t} C_{\s} R_{\add{\s}{\t}}\kappa \subseteq R_{\s \times \t} C_{\s} R_{\add{\s}{\t}}C_{\s}$ by Proposition \ref{P:CRCR}.
\end{proof}

Let $\sigma \in R_{\s\times \t}C_{\s} R_{\add{\s}{\t}} C_{\s}$.  Define
\begin{align*}
J_{\sigma} &= \{ j \in A_2 \mid \sigma(j), \sigma(j-k) \notin A_3 \}; \\
J'_{\sigma} &= \{ j \in A_2 \mid \sigma(j) \in A_3\}.
\end{align*}
Then $\sigma \in R_{\s\times \t}C_{\s} R_{\add{\s}{\t}} \kappa$ for some $\kappa \in C_{\s}$ if and only if $\kappa = \kappa_{\sigma,I} := \prod_{j \in I \cup J'_{\sigma}}  (j-k, j)$ for some $I \subseteq J_{\sigma}$ by Corollary \ref{C:CRCR} and Proposition \ref{P:CRCR}.  Thus, to describe $\m^{-1}(\sigma)$, it suffices to consider $\sigma \kappa_{\sigma, I} \in R_{\s\times \t} C_{\s} R_{\add{\s}{\t}}$ for various subsets $I$ of $J_\sigma$.

For the remainder of this paper, we need the following notation. For $\sigma \in \sym{k+\ell+m}$ and $i,j \in [1,3]$, let
$$
X^{\sigma}_{ij} = \sigma(A_i) \cap A_j.
$$ We record some easy consequences and leave their proofs to the reader as an easy exercise.

\begin{lem} \label{L:easy} \hfill

\begin{enumerate}
\item
\begin{enumerate}
\item If $\rho \in R_{\s\times \t}$, then $X^{\sigma \rho}_{ij} = X^{\sigma}_{ij}$ and $X^{\rho\sigma}_{ij} = \rho(X^{\sigma}_{ij})$ for all $i,j \in [1,3]$.
\item If $\rho' \in R_{\add{\s}{\t}}$, then $X^{\sigma \rho'}_{2j} = X^{\sigma}_{2j}$ and $X^{\rho'\sigma}_{i2} = \rho'(X^{\sigma}_{i2})$ for all $i,j \in [1,3]$.
\end{enumerate}
\item Let $\kappa \in C_{\s}$ and $J \subseteq A_1$.  The following statements are equivalent:
\begin{enumerate}
\item $X^{\kappa}_{21} = J$.
\item $\kappa = \prod_{j \in J} (j, j+k)$.
\item $X^{\kappa}_{12} = \pi_{\s}(J)= J^{+k}$.
\end{enumerate}
In particular, $\kappa(X^{\kappa}_{12}) = X^{\kappa}_{21}$, and
$\sgn(\kappa) = (-1)^{|X^{\kappa}_{12}|} = (-1)^{|X^{\kappa}_{21}|}$.
\end{enumerate}
\end{lem}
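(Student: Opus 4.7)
The plan is to verify both parts by direct set-theoretic manipulation, exploiting the observation that $R_{\s\times\t} = \sym{A_1}\sym{A_2}\sym{A_3}$ stabilizes each $A_i$ setwise, while $R_{\add{\s}{\t}} = \sym{A_1 \cup A_3}\sym{A_2}$ stabilizes $A_2$ (and $A_1 \cup A_3$) setwise. Once these structural facts are noted, everything else is bookkeeping.

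For part (1), since $\rho$ fixes each $A_i$ setwise (respectively, fixes $A_2$ setwise), we have $\sigma\rho(A_i) = \sigma(A_i)$, which gives the right-multiplication identities immediately. For the left-multiplication identities, I would write
$$(\rho\sigma)(A_i) \cap A_j = \rho(\sigma(A_i)) \cap A_j = \rho\bigl(\sigma(A_i) \cap \rho^{-1}(A_j)\bigr),$$
and then use $\rho^{-1}(A_j) = A_j$ (true for all $j$ when $\rho \in R_{\s\times\t}$, and true for $j = 2$ when $\rho \in R_{\add{\s}{\t}}$) to obtain $\rho(X^{\sigma}_{ij})$ as claimed.

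For part (2), the starting point is that $C_{\s}$ is generated by the mutually commuting involutions $(j-k,\,j)$ for $j \in A_2$, so any $\kappa \in C_{\s}$ has a unique expression $\kappa = \prod_{j \in S}(j-k,\,j)$ for some $S \subseteq A_2$. A direct case check (considering separately $j \in S$, $j \in A_2 \setminus S$, $j \in A_1 \cap [1,\ell]$ with $j+k \in S$, and $j \in A_1$ otherwise) yields $X^{\kappa}_{21} = S^{-k}$ and $X^{\kappa}_{12} = S$. Setting $J := S^{-k} \subseteq [1,\ell]$ translates (a) into $S^{-k} = J$, (b) into $S = J^{+k}$, and (c) into $S = \pi_{\s}(J)$; the equivalence then follows upon noting $\pi_{\s}(J) = J^{+k}$ for any $J \subseteq [1,\ell]$, which is immediate from the definition of $\pi_{\s}$. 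The concluding identities are then automatic: $\kappa(X^{\kappa}_{12}) = \kappa(S) = S^{-k} = X^{\kappa}_{21}$, and $\sgn(\kappa) = (-1)^{|S|} = (-1)^{|X^{\kappa}_{12}|} = (-1)^{|X^{\kappa}_{21}|}$ since $\kappa$ is a product of $|S|$ disjoint transpositions.

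The authors themselves characterize this as an easy exercise, and there is no real obstacle. The only subtlety worth flagging is that in part (2), membership $\kappa \in C_{\s}$ forces $J \subseteq [1,\ell]$, so $\pi_{\s}$ really does act on $J$ as the shift by $k$; this is automatic under all three conditions (a), (b), (c), but deserves a brief mention to justify the equality $\pi_{\s}(J) = J^{+k}$ appearing in the statement.
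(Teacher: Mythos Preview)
Your proof is correct; the paper itself leaves this lemma as an exercise for the reader, so there is no argument to compare against, and your direct set-theoretic verification is exactly the natural way to carry it out.
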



\begin{prop} \label{P:RCR}
Let $\tau = \rho \kappa \rho'$, where $\kappa \in C_{\s}$, $\rho \in R_{\s \times \t}$ and $\rho' \in R_{\add{\s}{\t}}$, and let $\rho_1 \in R_{\s \times \t}$.  Then $\tau \in \rho_1 C_{\s} R_{\add{\s}{\t}}$ if and only if $\rho_1^{-1} \rho (X^\kappa_{12}) = (\rho_1^{-1}\rho(X^\kappa_{21}))^{+k}$, in which case there exist unique $\kappa_1\in C_{\s}$ and $\rho_1' \in R_{\add{\s}{\t}}$ such that $\tau = \rho_1 \kappa_1 \rho_1'$.

In particular, $|\{ (\rho_1, \kappa_1,\rho_1') \in R_{\s\times \t} \times C_{\s} \times R_{\add{\s}{\t}} \mid \rho_1 \kappa_1 \rho_1' = \tau \}| = |X^{\tau}_{21}|!(k-|X^{\tau}_{21}|)!\ell!m!$.
\end{prop}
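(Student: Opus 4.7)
The plan is to reduce the membership ``$\tau \in \rho_1 C_{\s} R_{\add{\s}{\t}}$'' to a condition on the single element $\sigma_0 := \rho_1^{-1}\rho \in R_{\s\times\t}$. Indeed, the equation $\tau = \rho_1\kappa_1\rho'_1$ rearranges to $\kappa_1^{-1}\sigma_0\kappa = \rho'_1(\rho')^{-1}$; since the right-hand side must lie in $R_{\add{\s}{\t}} = \sym{A_1 \cup A_3}\sym{A_2}$, which is the stabilizer of $A_2$, and $\kappa_1^{-1} = \kappa_1$, this is equivalent to the set-theoretic equation $(\kappa_1\sigma_0\kappa)(A_2) = A_2$ for some $\kappa_1 \in C_{\s}$.

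The first substantive step is to expand $(\kappa_1\sigma_0\kappa)(A_2)$. Using $\kappa(A_2) = X^{\kappa}_{21} \sqcup (A_2 \setminus X^{\kappa}_{12})$ together with the fact that $\sigma_0$ preserves each of $A_1, A_2, A_3$ separately, one obtains $\sigma_0(\kappa(A_2)) = U \sqcup (A_2 \setminus V)$, writing $U := \sigma_0(X^\kappa_{21}) \subseteq A_1$ and $V := \sigma_0(X^\kappa_{12}) \subseteq A_2$. Since $\kappa_1$ swaps $j \leftrightarrow j+k$ for $j \in X^{\kappa_1}_{21} \subseteq [1,\ell]$ and fixes everything else, the equation $\kappa_1(U \sqcup (A_2 \setminus V)) = A_2$ forces $U \subseteq X^{\kappa_1}_{21}$ and $(A_2 \setminus V) \cap X^{\kappa_1}_{12} = \emptyset$. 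Combined with the rigid structure $X^{\kappa_1}_{12} = (X^{\kappa_1}_{21})^{+k}$ of $C_{\s}$-elements and the equality $|U| = |V|$, cardinality considerations force $X^{\kappa_1}_{21} = U$ and $V = U^{+k}$, which is exactly the stated condition $\sigma_0(X^\kappa_{12}) = (\sigma_0(X^\kappa_{21}))^{+k}$. Simultaneously, $\kappa_1$ is pinned down uniquely, and then $\rho'_1 = \kappa_1 \sigma_0 \kappa \rho'$ is forced.

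For the final count, I would decompose $\rho_1 = \alpha\beta\gamma$ with $\alpha\in\sym{A_1}$, $\beta\in\sym{A_2}$, $\gamma\in\sym{A_3}$. By Lemma~\ref{L:easy}(1), $X^\tau_{21} = \rho(X^\kappa_{21})$ and $A_2 \setminus X^\tau_{22} = \rho(X^\kappa_{12})$, so writing $t := |X^\tau_{21}|$, we also have $|A_2 \setminus X^\tau_{22}| = t$. Translating the condition $\sigma_0(X^\kappa_{12}) = (\sigma_0(X^\kappa_{21}))^{+k}$ via $\sigma_0 = \rho_1^{-1}\rho$ separates into (a) $\alpha^{-1}(X^\tau_{21}) \subseteq [1,\ell]$ and (b) $\beta^{-1}(A_2\setminus X^\tau_{22}) = (\alpha^{-1}(X^\tau_{21}))^{+k}$. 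The factor $\gamma$ is unconstrained, giving $m!$; condition (a) allows $\ell!(k-t)!/(\ell-t)!$ choices for $\alpha$ (pick an injection $X^\tau_{21}\hookrightarrow[1,\ell]$ for $\alpha^{-1}|_{X^\tau_{21}}$, then extend freely on $A_1\setminus X^\tau_{21}$); and given $\alpha$, condition (b) prescribes $\beta$ bijectively on a fixed $t$-subset of $A_2$ and leaves it free on the complementary $(\ell-t)$-subset, contributing $t!(\ell-t)!$. The product collapses to $t!(k-t)!\ell!m!$, as desired.

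The main obstacle I foresee is the careful extraction of the set-theoretic condition from $(\kappa_1\sigma_0\kappa)(A_2) = A_2$: one must track $U$ and $V$ separately and use the rigidity $X^{\kappa_1}_{12} = (X^{\kappa_1}_{21})^{+k}$ together with cardinality matching to conclude both the condition and the unique determination of $\kappa_1$. Once this is in place, the uniqueness and counting reduce to routine block-level manipulations via $R_{\s\times\t} = \sym{A_1}\sym{A_2}\sym{A_3}$.
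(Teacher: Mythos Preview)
Your proposal is correct and follows essentially the same approach as the paper's proof: both reduce to analyzing $\kappa_1^{-1}(\rho_1^{-1}\rho)\kappa$, invoke the structure of $C_{\s}$ from Lemma~\ref{L:easy}(2), and use a cardinality-matching argument to pin down $\kappa_1$ uniquely; the count is then done via the block decomposition $R_{\s\times\t}=\sym{A_1}\sym{A_2}\sym{A_3}$. The differences are purely organizational---you argue set-theoretically (computing $(\kappa_1\sigma_0\kappa)(A_2)$ and reading off constraints on $U,V$) whereas the paper chases individual elements of $X^{\kappa}_{21}$, and for the count you fix $\alpha$ and enumerate $\beta$, while the paper fixes $(\sigma_2,\sigma_3)$ and enumerates $\sigma_1$; both routes yield the same product $t!(k-t)!\ell!m!$.
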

\begin{proof} 

If $\tau = \rho_1 \kappa_1 \rho'_1$ with $\kappa_1 \in C_{\s}$ and $\rho'_1 \in R_{\add{\s}{\t}}$, then
\begin{align*}
\sym{A_1 \cup A_2}\sym{A_3} \ni \kappa_1^{-1}\rho_1^{-1} \rho \kappa =  \rho'_1\rho'^{-1}\in R_{\add{\s}{\t}} = \sym{A_1 \cup A_3}\sym{A_2}
\end{align*}
so that $\kappa_1^{-1}\rho_1^{-1} \rho \kappa \in \sym{A_1 \cup A_2}\sym{A_3} \cap \sym{A_1 \cup A_3}\sym{A_2} = \sym{A_1}\sym{A_2}\sym{A_3}$.  In particular, for all $i \in X^{\kappa}_{21}$, we have $\rho_1^{-1} \rho \kappa (i) \in A_2$ (since $\kappa(i) \in A_2$ and $\rho_1^{-1}\rho \in R_{\s\times \t}$) while $\kappa_1^{-1}(\rho_1^{-1} \rho \kappa (i)) \in A_1$ (since $i\in A_1$), so that $\rho_1^{-1} \rho \kappa (i) \in X^{\kappa_1}_{12}$.
By Lemma \ref{L:easy}, we have $ |X^{\kappa}_{21}| = |X^{\tau}_{21}| = |X^{\kappa_1}_{21}| =|X^{\kappa_1}_{12}|$  and hence $X^{\kappa_1}_{12} = \rho_1^{-1} \rho \kappa (X^{\kappa}_{21}) = \rho_1^{-1} \rho (X^{\kappa}_{12})$; consequently $\kappa_1 = \prod_{j \in X^{\kappa}_{12}} (\rho_1^{-1} \rho (j)-k, \rho_1^{-1} \rho (j))$.
In particular, $\kappa_1$ is unique, and hence so is $\rho_1'$.
Repeating the same argument with $X^{\kappa}_{12}$ replacing $X^{\kappa}_{21}$, we also get $X^{\kappa_1}_{21} = \rho_1^{-1} \rho (X^{\kappa}_{21})$.  Thus
$$\rho_1^{-1}\rho(X^\kappa_{12}) = X^{\kappa_1}_{12} = (X^{\kappa_1}_{21})^{+k} = (\rho_1^{-1}\rho(X^\kappa_{21}))^{+k}.$$

Conversely, if $\rho_1^{-1} \rho (X^\kappa_{12}) = (\rho_1^{-1}\rho(X^\kappa_{21}))^{+k}$, let
$$\kappa_1 = \prod_{i \in X^{\kappa}_{21}} (\rho_1^{-1}\rho(i), \rho_1^{-1}\rho(i)+k) = \prod_{j \in X^{\kappa}_{12}} (\rho_1^{-1}\rho(j)-k, \rho_1^{-1}\rho(j)) .$$
Then for $j \in X^{\kappa}_{12}$, we have $\kappa (j) \in X^{\kappa}_{21}$ by Lemma \ref{L:easy}(2), so that $ \kappa_1\rho_1^{-1}\rho \kappa(j) = \rho_1^{-1}\rho \kappa(j) + k \in A_2$.  On the other hand, if $j \in A_2 \setminus X^{\kappa}_{12}$, then $\kappa_1\rho_1^{-1}\rho \kappa(j) = \kappa_1(\rho_1^{-1}\rho(j)) = \rho_1^{-1}\rho(j) \in A_2$.  Thus, $\kappa_1\rho_1^{-1}\rho \kappa(A_2) = A_2$, and so $\kappa_1\rho_1^{-1}\rho \kappa \in \sym{A_1}\sym{A_2}\sym{A_3}$, and hence $\kappa_1\rho_1^{-1}\tau = \kappa_1\rho_1^{-1}\rho \kappa \rho' \in R_{\add{\s}{\t}}$ as desired.

Let $B_1=\rho(X^\kappa_{21}) \subseteq A_1$ and $B_2=\rho(X^\kappa_{12}) \subseteq A_2$.  Then $|B_1| = |B_2| = |X^{\kappa}_{21}| = |X^{\tau}_{21}|$ by Lemma \ref{L:easy}. Suppose that $\rho_1^{-1}=\sigma_1\sigma_2\sigma_3$ where $\sigma_i\in\sym{A_i}$ for all $i$. The condition $\rho_1^{-1} \rho (X^\kappa_{12}) = (\rho_1^{-1}\rho(X^\kappa_{21}))^{+k}$ is equivalent to $\sigma_2(B_2)=(\sigma_1(B_1))^{+k}$. For each pair $(\sigma_2,\sigma_3) \in \sym{A_2} \times \sym{A_3}$, $|\{ \sigma_1 \in A_1 \mid (\sigma_1(B_1))^{+k} = \sigma_2(B_2) \}| = |X^{\tau}_{21}|!(k-|X^{\tau}_{21}|)!$.  The last assertion thus follows.
\end{proof}

\begin{lem} \label{L:a_sigma=0}
Let $\sigma \in R_{\s\times\t}C_{\s}R_{\add{\s}{\t}}C_{\s}$.  If $(\sigma(j-k), \sigma(j)) \in R_{\s\times \t}$ for some $j \in A_2$, then $a_{\sigma} = 0$.
\end{lem}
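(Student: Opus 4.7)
The plan is to reorganize the defining sum for $a_\sigma$ by grouping the quadruples according to their last coordinate $\kappa'$, and then to construct a sign-reversing involution on the resulting index set that cancels the contributions in pairs.

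First I would reparametrize. By the discussion following Corollary~\ref{C:CRCR}, the admissible $\kappa'$ are exactly $\kappa' = \kappa_{\sigma,I} = \prod_{j' \in I \cup J'_\sigma}(j'-k, j')$ with $I \subseteq J_\sigma$, and for each such $\kappa'$ the element $\tau := \sigma\kappa' \in R_{\s\times\t}C_\s R_{\add{\s}{\t}}$ admits $|X^\tau_{21}|!\,(k-|X^\tau_{21}|)!\,\ell!\,m!$ decompositions $\rho\kappa\rho'$ by Proposition~\ref{P:RCR}. Tracing the orbit of $A_2$ through the factors $\rho' \in \sym{A_1\cup A_3}\sym{A_2}$, then $\kappa \in C_\s$, then $\rho \in \sym{A_1}\sym{A_2}\sym{A_3}$, one finds $X^\tau_{21} = \rho(X^\kappa_{21})$. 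Lemma~\ref{L:easy}(2) then yields $\sgn(\kappa) = (-1)^{|X^\kappa_{21}|} = (-1)^{|X^\tau_{21}|}$, a sign depending only on $\tau$. Combining this with $\sgn(\kappa_{\sigma,I}) = (-1)^{|I|+|J'_\sigma|}$ gives
$$a_\sigma = \ell!\,m!\sum_{I\subseteq J_\sigma} |X^\tau_{21}|!\,(k-|X^\tau_{21}|)!\,(-1)^{|X^\tau_{21}|+|I|+|J'_\sigma|}.$$

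Next I would unpack the hypothesis. The transposition $(\sigma(j-k),\sigma(j))$ lies in $R_{\s\times\t} = \sym{A_1}\sym{A_2}\sym{A_3}$ exactly when $\sigma(j-k)$ and $\sigma(j)$ lie in a common $A_i$. Corollary~\ref{C:CRCR} rules out $i=3$, so either both lie in $A_1$ or both lie in $A_2$; in either case $\sigma(j-k),\sigma(j)\notin A_3$, which forces $j\in J_\sigma$, and moreover $\sigma(j-k)\in A_1$ iff $\sigma(j)\in A_1$. The crucial step is then the involution $I \mapsto I\triangle\{j\}$ on subsets of $J_\sigma$. Toggling the membership of $j$ in $I$ swaps $\tau(j)$ between $\sigma(j)$ and $\sigma(j-k)$ while leaving $\tau$ unchanged on $A_2\setminus\{j\}$; since $\sigma(j-k)$ and $\sigma(j)$ lie in the same block, the indicator $[\tau(j)\in A_1]$ is unaffected, so $|X^\tau_{21}|$, the multiplicity $|X^\tau_{21}|!(k-|X^\tau_{21}|)!$, and $(-1)^{|X^\tau_{21}|}$ are all unchanged. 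Only $(-1)^{|I|}$ switches sign, so the summands pair up to cancel and $a_\sigma = 0$.

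The main obstacle, which I would address first, is verifying that $\sgn(\kappa)$ depends only on $\tau$ and not on the particular decomposition $\tau = \rho\kappa\rho'$ — a subtle point because $\kappa$ is not itself determined by $\tau$. Without this invariance, the multiplicity $|X^\tau_{21}|!(k-|X^\tau_{21}|)!$ would have to be broken up into signed contributions and the involution on $I$ would no longer straightforwardly cancel. The argument hinges on the identity $X^\tau_{21} = \rho(X^\kappa_{21})$, which is a routine but careful tracking through Lemma~\ref{L:easy}; once this is established the rest is bookkeeping.
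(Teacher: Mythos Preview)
Your argument is correct, but the paper's proof is considerably shorter and avoids the aggregation step entirely. Rather than grouping by $\kappa'$, invoking Proposition~\ref{P:RCR}, and establishing the invariance $\sgn(\kappa)=(-1)^{|X^\tau_{21}|}$, the paper works directly on the fibre $\m^{-1}(\{\sigma\})$ via the involution
\[
(\rho,\kappa,\rho',\kappa')\ \longmapsto\ \bigl((\sigma(j-k),\sigma(j))\rho,\ \kappa,\ \rho',\ \kappa'(j-k,j)\bigr),
\]
which is well-defined because $(\sigma(j-k),\sigma(j))\in R_{\s\times\t}$ by hypothesis and $(j-k,j)\in C_\s$, and is sign-reversing in $\sgn(\kappa\kappa')$. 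Both proofs ultimately toggle the $(j-k,j)$-factor of $\kappa'$; the difference is that the paper compensates for this on the $\rho$ side in one stroke, whereas you first show the aggregated count and sign over each $\kappa'$-fibre is invariant under the toggle. Your invariance observation $\sgn(\kappa)=(-1)^{|X^{\sigma\kappa'}_{21}|}$ is in fact the same computation that opens the proof of Proposition~\ref{P:equalsign}, and your overall shape anticipates the method of Proposition~\ref{P:a_sigma}; so your route buys a uniform treatment with the later explicit evaluation of $a_\sigma$, at the cost of importing more machinery than this lemma needs.
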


\begin{proof}
The map $f : \m^{-1}(\{ \sigma \}) \to \m^{-1}(\{\sigma \})$ defined by
$$(\rho, \kappa, \rho',\kappa') \mapsto ((\sigma(j-k), \sigma(j))\rho, \kappa, \rho', \kappa'(j-k,j))$$ is a well-defined fixed-point-free involution on $\m^{-1}(\{ \sigma \})$.  Furthermore, $\sgn(\kappa)\sgn(\kappa') = -\sgn(\kappa)\sgn(\kappa'(j-k,j))$, so that the contributions by $(\kappa,\rho,\kappa', \rho')$ and $f(\kappa,\rho,\kappa', \rho')$ to $a_\sigma$ cancel out. Thus $a_{\sigma} = 0$.
\end{proof}

\begin{prop} \label{P:equalsign}
Let $\sigma \in R_{\s \times \t}C_{\s}R_{\add{\s}{\t}}C_{\s}$ such that $(\sigma(j-k), \sigma(j)) \notin R_{\s\times \t}$ for all $j \in A_2$.
Then there exists $\varepsilon_{\sigma} \in \{ \pm 1\}$ such that
$\sgn(\kappa\kappa') =  \varepsilon_\sigma$ for all $(\rho,\kappa,  \rho',\kappa') \in \m^{-1}(\{\sigma\})$.
\end{prop}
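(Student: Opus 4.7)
The plan is to isolate $\sgn(\kappa\kappa')$ as a function of $\kappa'$ alone, and then to check by a direct parity count that this function is constant on the set of admissible $\kappa'$.

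First, for any $(\rho,\kappa,\rho',\kappa') \in \m^{-1}(\{\sigma\})$, since $(\kappa')^2=1$ we have $\sigma\kappa' = \rho\kappa\rho' \in R_{\s\times\t}C_\s R_{\add{\s}{\t}}$. The identity $|X^\kappa_{21}| = |X^{\sigma\kappa'}_{21}|$ established in the proof of Proposition \ref{P:RCR}, together with Lemma \ref{L:easy}(2), gives $\sgn(\kappa) = (-1)^{|X^{\sigma\kappa'}_{21}|}$, so $\sgn(\kappa\kappa')$ depends only on $\kappa'$. By the paragraph preceding Proposition \ref{P:RCR}, the admissible $\kappa'$ are precisely $\kappa_{\sigma,I}$ as $I$ ranges over the subsets of $J_\sigma$, so it remains to show that $|X^{\sigma\kappa_{\sigma,I}}_{21}| + |X^{\kappa_{\sigma,I}}_{21}|$ is independent of $I$ modulo $2$.

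The hypothesis forces $\sigma(j-k)$ and $\sigma(j)$ to lie in distinct blocks among $A_1,A_2,A_3$, and Corollary \ref{C:CRCR} forbids them both from lying in $A_3$. I would then introduce the six-block refinement
\[
T_{pq} := \{j \in A_2 \mid \sigma(j-k) \in A_p,\ \sigma(j) \in A_q\} \qquad (p,q \in \{1,2,3\},\ p\neq q),
\]
for which $J_\sigma = T_{12}\sqcup T_{21}$, $J'_\sigma = T_{13}\sqcup T_{23}$, and $J''_\sigma := A_2 \setminus (J_\sigma \cup J'_\sigma) = T_{31}\sqcup T_{32}$.

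Using that $\kappa_{\sigma,I}(j)=j-k$ for $j \in I\cup J'_\sigma$ while $\kappa_{\sigma,I}(j)=j$ otherwise, a case-by-case count on the partition $A_2 = I \sqcup (J_\sigma\setminus I) \sqcup J'_\sigma \sqcup J''_\sigma$ gives
\[
|X^{\sigma\kappa_{\sigma,I}}_{21}| = |I\cap T_{12}| + |(J_\sigma\setminus I)\cap T_{21}| + |T_{13}| + |T_{31}|,
\]
whereas Lemma \ref{L:easy}(2) gives $|X^{\kappa_{\sigma,I}}_{21}| = |I| + |J'_\sigma| = |I| + |T_{13}| + |T_{23}|$. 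The identity $|I| = |I\cap T_{12}| + |I\cap T_{21}|$, valid since $I\subseteq J_\sigma = T_{12}\sqcup T_{21}$, then makes the $I$-dependent terms cancel modulo $2$, yielding
\[
\sgn(\kappa\kappa') = (-1)^{|T_{21}|+|T_{31}|+|T_{23}|} =: \varepsilon_\sigma,
\]
which depends only on $\sigma$. The main obstacle is the bookkeeping in setting up the six-block partition and matching each $j$ to its contribution; once this is in place, the cancellation of the $I$-dependent terms is immediate.
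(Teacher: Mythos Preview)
Your proof is correct. The overall strategy coincides with the paper's—both reduce $\sgn(\kappa\kappa')$ to the parity of $|X^{\sigma\kappa'}_{21}|+|X^{\kappa'}_{21}|$ via Lemma \ref{L:easy} and the identity $|X^{\kappa}_{21}|=|X^{\sigma\kappa'}_{21}|$—but the execution differs. The paper first reduces to the special case $X^{\sigma}_{21}=\varnothing$ by right-multiplying by a suitable $\kappa''\in C_{\s}$, and in that case proves the set equality $\sigma(X^{\kappa'}_{21})\setminus X^{\sigma\kappa'}_{21}=\sigma([1,\ell])\cap A_2$, obtaining the exact difference $|X^{\kappa'}_{21}|-|X^{\kappa}_{21}|=|\sigma([1,\ell])\cap A_2|$ before recovering the general case. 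Your six-block partition $\{T_{pq}\}$ replaces this reduction-plus-set-identity by a single uniform count, and yields directly $\varepsilon_{\sigma}=(-1)^{|T_{21}|+|T_{31}|+|T_{23}|}$; one checks readily that this agrees with the paper's $\sgn(\kappa'')\varepsilon_{\sigma_0}$. Your route is a bit more economical here; the paper's route gives the slightly sharper statement that, in the special case, the difference $|X^{\kappa'}_{21}|-|X^{\kappa}_{21}|$ is actually constant (not merely constant modulo $2$), though this extra precision is not used later.
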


\begin{proof}
We assume first that $X^{\sigma}_{21}  = \sigma(A_2) \cap A_1 = \varnothing$.
Let $(\rho, \kappa, \rho',\kappa') \in \m^{-1}(\{\sigma\})$.  Then $
\sigma\kappa' = \rho \kappa \rho'$, and so  $X^{\sigma\kappa'}_{21} = X^{\rho\kappa\rho'}_{21} = \rho(X^{\kappa}_{21})$ by Lemma \ref{L:easy}(1).
On the other hand, by Lemma \ref{L:easy}(2),
\begin{align*}
X^{\sigma\kappa'}_{21} = (\sigma\kappa' ) (A_2) \cap A_1 &= (\sigma\kappa' ) (X^{\kappa'}_{12} \cup (A_2 \setminus X^{\kappa'}_{12})) \cap A_1 \\
&=  (\sigma(X^{\kappa'}_{21}) \cup \sigma(A_2 \setminus X^{\kappa'}_{12})) \cap A_1 = \sigma(X^{\kappa'}_{21}) \cap A_1,
\end{align*}
since $ \sigma(A_2) \cap A_1 = \varnothing$.

We claim that $\sigma(X^{\kappa'}_{21}) \setminus X^{\sigma \kappa'}_{21} = \sigma([1,\ell]) \cap A_2$.
Firstly, for $i \in X^{\kappa'}_{21}$, we have $i \in [1,\ell]$, and $\sigma(i) = \sigma\kappa'(\kappa'(i)) \in A_1 \cup A_2$ by Proposition \ref{P:CRCR}, since $\kappa'(i) \in A_2$.
If $\sigma(i) \in A_1$, then $\sigma(i) \in \sigma(X^{\kappa'}_{21}) \cap A_1 = X^{\sigma\kappa'}_{21}$.
Thus, for $i\in X^{\kappa'}_{21}$ such that $\sigma(i)\not\in X^{\sigma\kappa'}_{21}$, we have $\sigma(i)\in A_2$ and hence $\sigma(i) \in \sigma([1,\ell]) \cap A_2$.
Conversely, if $i' \in [1,\ell] \cap \sigma^{-1}(A_2)$, then $\sigma(i') \in A_2$.
Since $\sigma(A_2) \cap A_1 = \varnothing$ (our assumption) and $(\sigma(i') , \sigma(i'+k)) \notin R_{\s\times \t}$ (the condition in the proposition), we must have $\sigma(i'+k) \in A_3$.
But by Proposition \ref{P:CRCR}, we have $\sigma\kappa'(i'+k) \notin A_3$, and so $\kappa'(i'+k)\neq i'+k$.  Thus, by Lemma \ref{L:easy}(2), $i'+k \in X^{\kappa'}_{12}$ and $\sigma(i') = \sigma\kappa'(i'+k) \in \sigma (X^{\kappa'}_{21})$.
Furthermore, since $\sigma(i') \in A_2$, we have $\sigma(i') \notin X^{\sigma\kappa'}_{21} \subseteq A_1$, and the proof of the claim is complete.

Since $\rho(X^{\kappa}_{21})=X^{\sigma\kappa'}_{21}\subseteq \sigma(X^{\kappa'}_{21})$, we conclude from the above that $|X^{\kappa'}_{21}| - |X^{\kappa}_{21}| = |\sigma([1,\ell])\cap A_2|$.  Let $\varepsilon_{\sigma} = (-1)^{|\sigma([1,\ell])\cap A_2|}$.  Then, by Lemma \ref{L:easy}(2), $$\sgn(\kappa\kappa') = (-1)^{|X^{\kappa'}_{21}| - |X^{\kappa}_{21}|} = (-1)^{|\sigma([1,\ell])\cap A_2|} = \varepsilon_{\sigma}.$$

For general $\sigma$, let $\kappa'' = \prod_{j \in A_2 \cap \sigma^{-1}(A_1)} (j-k, j) \in C_{\s}$, and let $\sigma_0 = \sigma\kappa''$.  Then for all $j \in A_2$, if $\sigma(j) \notin A_1$, then $\sigma_0(j) = \sigma(j) \notin A_1$, while if $\sigma(j) \in A_1$, then $\sigma_0 (j) = \sigma\kappa''(j) = \sigma(j-k) \notin A_1$ since $(\sigma(j-k),\sigma(j)) \notin R_{\s \times \t}$.  Thus $\sigma_0(A_2) \cap A_1 = \varnothing$.  Furthermore, $\sigma_0 \in R_{\s\times \t}C_{\s}R_{\add{\s}{\t}}C_{\s}$ such that $(\sigma_0(j-k), \sigma_0(j)) \notin R_{\add{\s}{\t}}$ for all $j \in A_2$.  If $(\rho, \kappa, \rho',\kappa') \in \m^{-1}(\{\sigma\})$, then $(\rho, \kappa, \rho',\kappa'\kappa'') \in \m^{-1}(\{\sigma_0\})$, and so $\sgn(\kappa\kappa'\kappa'') = \varepsilon_{\sigma_0}$, and hence $\sgn(\kappa\kappa') = \sgn(\kappa'')\varepsilon_{\sigma_0} =: \varepsilon_{\sigma}$, and the proof is complete.
\end{proof}

We will need the next result in the proof of Proposition \ref{P:a_sigma} later.

\begin{lem} \label{L:sum}
Let $r,s \in \mathbb{Z}_{\geq 0}$ with $r+s \leq k$.  Then
$$
\sum_{i=0}^{k-r-s} \binom{k-r-s}{i}(i+s)!(k-i-s)! = \frac{(k+1)!r!s!}{(r+s+1)!}.
$$
\end{lem}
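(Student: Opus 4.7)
The plan is to recognize Lemma~\ref{L:sum} as a disguised Chu--Vandermonde convolution. First I would set $n := k-r-s \geq 0$, so that $k-i-s = n+r-i$ and the identity to be proved becomes
\[
\sum_{i=0}^{n} \binom{n}{i}(i+s)!\,(n+r-i)! \;=\; \frac{(n+r+s+1)!\,r!\,s!}{(r+s+1)!}.
\]
Next I would rewrite each summand by absorbing $i!$ into $(i+s)!$ and $(n-i)!$ into $(n+r-i)!$:
\[
\binom{n}{i}(i+s)!\,(n+r-i)! \;=\; \frac{n!}{i!(n-i)!}(i+s)!(n+r-i)! \;=\; n!\,s!\,r!\,\binom{i+s}{s}\binom{n+r-i}{r},
\]
which lets me pull the constant factor $n!\,s!\,r!$ out of the sum.

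The remaining sum $\sum_{i=0}^{n}\binom{i+s}{s}\binom{n-i+r}{r}$ is the classical Chu--Vandermonde convolution (the ``hockey-stick'' convolution of two rising-binomial sequences), whose value is $\binom{n+r+s+1}{r+s+1}$. Substituting back gives
\[
n!\,s!\,r!\cdot \binom{n+r+s+1}{r+s+1} \;=\; \frac{(n+r+s+1)!\,r!\,s!}{(r+s+1)!} \;=\; \frac{(k+1)!\,r!\,s!}{(r+s+1)!},
\]
as required.

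There is no real obstacle here beyond spotting the right rewriting; the argument is a routine manipulation once each summand is expressed using the binomial coefficients $\binom{i+s}{s}$ and $\binom{n-i+r}{r}$. If one prefers a self-contained argument rather than citing Chu--Vandermonde, the convolution $\sum_{i=0}^n \binom{i+s}{s}\binom{n-i+r}{r} = \binom{n+r+s+1}{r+s+1}$ admits a two-line combinatorial proof by counting $(r+s+1)$-subsets of $\{0,1,\dotsc,n+r+s+1\}$ according to the position of their $(s+1)$-st smallest element, which could be inserted in place of the citation.
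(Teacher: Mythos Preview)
Your proof is correct and takes a genuinely different route from the paper. The paper argues by induction on $k-r-s$: it splits each $\binom{k-r-s}{i}$ via Pascal's rule into $\binom{k-r-s-1}{i}+\binom{k-r-s-1}{i-1}$, reindexes, and recognizes the two resulting sums as the cases $(r+1,s)$ and $(r,s+1)$, both with the parameter $k-r-s$ decreased by one; the inductive hypothesis then gives $\frac{(k+1)!(r+1)!s!}{(r+s+2)!}+\frac{(k+1)!r!(s+1)!}{(r+s+2)!}=\frac{(k+1)!r!s!}{(r+s+1)!}$. Your argument is more direct: after setting $n=k-r-s$ and factoring each summand as $n!\,r!\,s!\,\binom{i+s}{s}\binom{n+r-i}{r}$, the identity becomes the standard Vandermonde-type convolution $\sum_{i=0}^{n}\binom{i+s}{s}\binom{n-i+r}{r}=\binom{n+r+s+1}{r+s+1}$. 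Your approach is shorter and makes the structure transparent (the identity is a disguised Vandermonde), while the paper's inductive argument is fully self-contained and needs no external citation. One minor slip in your optional combinatorial aside: the ambient set should have $n+r+s+1$ elements, so write $\{1,\dotsc,n+r+s+1\}$ (or $\{0,\dotsc,n+r+s\}$) rather than $\{0,1,\dotsc,n+r+s+1\}$.
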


\begin{proof}
We prove by induction on $k-r-s$, where the base case of $k-r-s =0$ can be easily verified.  Assume therefore that $k - r-s>0$.  Using the identity $\binom{n}{a} = \binom{n-1}{a} + \binom{n-1}{a-1}$, together with the convention that $\binom{n}{a} = 0$ if $a > n$ or $a < 0$, we have
{\allowdisplaybreaks
\begin{align*}
\text{LHS} &= \sum_{i=0}^{k-r-s} \left(\binom{k-r-s-1}{i} + \binom{k-r-s-1}{i-1}\right)(i+s)!(k-i-s)! \\
&= \sum_{i=0}^{k-r-s-1} \binom{k-r-s-1}{i} (i+s)!(k-i-s)! \\
&\qquad \quad + \sum_{i=1}^{k-r-s} \binom{k-r-s-1}{i-1}(i+s)!(k-i-s)! \\
&= \sum_{i=0}^{k-r-s-1} \binom{k-r-s-1}{i} (i+s)!(k-i-s)! \\
&\qquad \quad + \sum_{i=0}^{k-r-s-1} \binom{k-r-s-1}{i}(i+s+1)!(k-i-s-1)! \\
&= \frac{(k+1)!(r+1)!s!}{(r+s+2)!} + \frac{(k+1)!r!(s+1)!}{(r+s+2)!} \\
&= \frac{(k+1)!r!s!}{(r+s+2)!}(r+1 + s+1) = \frac{(k+1)!r!s!}{(r+s+1)!},
\end{align*}}
where the fourth equality follows from induction hypothesis.
\end{proof}

\begin{prop} \label{P:a_sigma}
Let $\sigma \in R_{\s\times \t}C_{\s}R_{\add{\s}{\t}}C_{\s}$ such that $(\sigma(j-k), \sigma(j)) \notin R_{\s\times \t}$ for all $j \in A_2$.  Let
\begin{align*}
r &= |\{ i \in [\ell+1,k] \cup A_3 \mid \sigma(i) \in A_1 \}|; \\
s &= |\{j \in A_2 \mid \sigma(j),\sigma(j-k) \notin A_2 \}|.
\end{align*}
  Then
\begin{enumerate}
\item $r \geq k-\ell$ and $r+s \leq \min(k, k-\ell+m)$;
\item
$$
a_{\sigma} 
= \varepsilon_{\sigma} \frac{\ell! m! (k+1)!r!s!}{(r+s+1)!} .
$$
\end{enumerate}
\end{prop}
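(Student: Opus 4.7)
By Proposition \ref{P:equalsign}, every summand $\sgn(\kappa\kappa')$ appearing in $a_\sigma$ equals $\varepsilon_\sigma$, so $a_\sigma = \varepsilon_\sigma |\m^{-1}(\{\sigma\})|$ and the problem reduces to counting $\m^{-1}(\{\sigma\})$. The plan is to stratify by the last coordinate $\kappa'$. Applying Proposition \ref{P:CRCR} to $\sigma\kappa'$ (using $\kappa' = \kappa'^{-1}$), the admissible $\kappa'$ are exactly $\kappa_{\sigma,I} = \prod_{j \in I \cup J'_\sigma}(j-k,j)$ for $I \subseteq J_\sigma$, and for each such $\kappa'$ Proposition \ref{P:RCR} furnishes $|X^{\sigma\kappa'}_{21}|!(k - |X^{\sigma\kappa'}_{21}|)!\,\ell!\,m!$ triples $(\rho,\kappa,\rho')$ with $\rho\kappa\rho' = \sigma\kappa'$; so the key calculation is $|X^{\sigma\kappa_{\sigma,I}}_{21}|$ as a function of $I$.

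For this I would refine the partition $A_2 = J_\sigma \sqcup J'_\sigma \sqcup J''_\sigma$, where $J''_\sigma := \{j \in A_2 : \sigma(j-k) \in A_3\}$, by splitting each piece according to which of $A_1, A_2$ contains the non-$A_3$ endpoint; the non-$R_{\s\times\t}$ hypothesis forces $\sigma(j-k)$ and $\sigma(j)$ to lie in distinct blocks, so these refinements are unambiguous. Writing $a = |J_\sigma^{(1)}|$ for the pairs in $J_\sigma$ with $\sigma(j-k) \in A_1$, $b = |J_\sigma^{(2)}|$ for those with $\sigma(j) \in A_1$, and $\alpha := |J_\sigma^{'(1)}| + |J_\sigma^{''(1)}|$, a direct inspection of $\sigma\kappa_{\sigma,I}(j)$ for each $j \in A_2$ will yield
\[
|X^{\sigma\kappa_{\sigma,I}}_{21}| \;=\; \alpha + |I \cap J_\sigma^{(1)}| + b - |I \cap J_\sigma^{(2)}|.
\]
A short pair-counting argument then identifies $\alpha = s$, $a + b = |J_\sigma|$, and $|J_\sigma| = k - r - s$ (this last equality from $|\sigma^{-1}(A_1) \cap ([1,\ell] \cup A_2)| = |J_\sigma| + s$, since each pair $\{j-k,j\}$ contributes one to this count exactly when $j \in J_\sigma$ or when $j$ belongs to one of the two ``$A_1$-flavoured'' refinements of $J'_\sigma, J''_\sigma$).

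Summing over $I = I_1 \sqcup I_2 \subseteq J_\sigma^{(1)} \sqcup J_\sigma^{(2)}$ and substituting $e = |I_1| + b - |I_2|$, Vandermonde's identity collapses the inner bivariate sum to $\sum_{e=0}^{k-r-s}\binom{k-r-s}{e}(s+e)!(k-s-e)!$; Lemma \ref{L:sum} evaluates this to $\frac{(k+1)!r!s!}{(r+s+1)!}$, and multiplication by $\varepsilon_\sigma\,\ell!\,m!$ yields part (2). Part (1) falls out of the same bookkeeping: letting $p := |J'_\sigma| + |J''_\sigma|$ count the pairs with some endpoint in $A_3$, we obtain $|J_\sigma| = \ell - p$, hence $r + s = k - \ell + p$ and $s \leq p$, and the bounds $p \leq \min(\ell, m)$ (from disjointness in $A_2$ and injectivity of $\sigma$ into $A_3$) give $r \geq k - \ell$ and $r + s \leq \min(k, k - \ell + m)$. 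I expect the principal bookkeeping obstacle to be the refined case analysis producing the formula for $|X^{\sigma\kappa_{\sigma,I}}_{21}|$: one must carefully track, for each $j \in A_2$, whether $\kappa_{\sigma,I}$ toggles $j$ to $j-k$ before $\sigma$ is applied, and in which block the result lands; once that formula is in hand, Vandermonde and Lemma \ref{L:sum} close the computation mechanically.
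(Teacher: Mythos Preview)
Your proposal is correct and follows essentially the same route as the paper: reduce to counting $\m^{-1}(\{\sigma\})$ via Proposition~\ref{P:equalsign}, stratify over $\kappa'=\kappa_{\sigma,I}$ for $I\subseteq J_\sigma$, invoke Proposition~\ref{P:RCR} to get $|X^{\sigma\kappa_{\sigma,I}}_{21}|!(k-|X^{\sigma\kappa_{\sigma,I}}_{21}|)!\ell!m!$ triples, compute $|X^{\sigma\kappa_{\sigma,I}}_{21}|$ as $s$ plus a quantity depending on $I$, and close with Lemma~\ref{L:sum}. The only cosmetic difference is that the paper bypasses your Vandermonde step by noting that for each $j\in J_\sigma$ exactly one of the two choices ($j\in I$ or $j\notin I$) makes $\sigma\kappa_{\sigma,I}(j)\in A_1$, so $I\mapsto \{j\in J_\sigma:\sigma\kappa_{\sigma,I}(j)\in A_1\}$ is a bijection on subsets of $J_\sigma$ and the count $\binom{k-r-s}{i}$ is immediate.
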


\begin{proof}
For each $i \in [1,3]$, let $B_i = \{ a \in [1,\ell] \cup A_2 \mid \sigma(a) \in A_i \}$.  Then our imposed condition on $\sigma$ implies that for each $j \in A_2$, at most one of $\sigma(j)$ and $\sigma(j-k)$ may lie in $A_i$, so that $|B_i| \leq \ell$.  Furthermore, for each of the $s$ $j$'s in $A_2$ for which $\sigma(j),\sigma(j-k) \notin A_2$, exactly one of $\sigma(j)$ and $\sigma(j-k)$ lies in $A_1$, while the other lies in $A_3$.  Thus, there are exactly $(|B_1|-s)$ $j$'s in $A_2$ such that exactly one of $\sigma(j)$ and $\sigma(j-k)$ lies in $A_1$ while the other lies in $A_2$.
\begin{enumerate}
  \item There are exactly $k$ $i$'s from $[1,k+\ell+m]$ such that $\sigma(i) \in A_1$, and thus $r+s \leq k$. Exactly $|B_1|$ of these $i$'s lie in $[1,\ell] \cup A_2$, while exactly $r$ of these $i$'s lie outside $[1,\ell] \cup A_2$.  Thus $r+ |B_1| = k$, giving $r = k-|B_1| \geq k - \ell$.  There are exactly $|B_1|$ $j$'s in $A_2$ for which $\{ \sigma(j),\sigma(j-k) \} \cap A_1 \ne \varnothing $, and hence exactly $\ell-|B_1|$ $j$'s in $A_2$ for which $\sigma(j),
      \sigma(j-k) \notin A_1$, which is equivalent to having exactly one of $\sigma(j)$ and $\sigma(j-k)$ lying in $A_2$ while the other lying in $A_3$.  Consequently, $m \geq |B_3| \geq s + (\ell - |B_1|)  = s+\ell - (k-r)$, as desired.

  \item
By Proposition \ref{P:equalsign}, we have
$
a_{\sigma} = \varepsilon_{\sigma} |\m^{-1}(\{\sigma\})|.
$
Recall the paragraph right after Corollary \ref{C:CRCR}) that $(\rho,\kappa,\rho',\kappa') \in \m^{-1}(\{\sigma\})$ if and only if $\kappa'=\kappa_{\sigma,I} = \prod_{j \in I \cup J_{\sigma}'} (j-k,j)$ for some $I \subseteq J_{\sigma}$, where $J_{\sigma} = \{ j \in A_2 \mid \sigma(j), \sigma(j-k) \notin A_3\}$ and $J_{\sigma}' = \{ j \in A_2 \mid \sigma(j) \in A_3\}$, and that
$$
|\{ (\rho,\kappa,\rho') \in R_{\s\times \t}\times C_{\s}\times R_{\add{\s}{\t}} \mid \rho \kappa\rho'\kappa_{\sigma,I} = \sigma \}| = |X^{\sigma \kappa_{\sigma,I}}_{21}|!(k - |X^{\sigma \kappa_{\sigma,I}}_{21}|)! \ell! m!$$
by Proposition \ref{P:RCR}.  We conclude from the above discussion that $|J_{\sigma}| = |B_1|-s = k-r -s $, and that $|X^{\sigma \kappa_{\sigma,I}}_{21}| = s + |\{ j \in J_{\sigma} \mid \sigma\kappa_{\sigma,I} (j) \in A_1 \}|$.  Thus, for each $i \in [0,k-r-s]$, there are exactly $\binom{k-r-s}{i}$ subsets $I$ of $J_{\sigma}$ such that $|X^{\sigma \kappa_{\sigma,I}}_{21}| = s+i$.  Hence,
$$
|\m^{-1}(\{\sigma\})| = \sum_{i=0}^{k-r-s} \binom{k-r-s}{i} (i+s)!(k-i-s)!\ell!m!
= \frac{(k+1)!r!s!}{(r+s+1)!}\ell!m!
$$
by Lemma \ref{L:sum}.
\end{enumerate}
\end{proof}

\begin{lem} \label{L:eg*}
Let $r, s\in \mathbb{Z}_{\geq 0}$ such that $r \geq k-\ell$ and $r+s \leq \min(k,k-\ell+m)$.  Then there exists $\sigma \in R_{\s \times \t}C_{\s} R_{\add{\s}{\t}} C_{\s}$ such that $(\sigma(j-k), \sigma(j)) \notin R_{\s\times \t}$ for all $j \in A_2$, and
$$
r = |\{ i\in [\ell+1,k] \cup A_3 \mid \sigma(i) \in A_1 \}| \quad \text{and} \quad s = |\{ j \in A_2 \mid \sigma(j), \sigma(j-k) \notin A_2 \}|.
$$
\end{lem}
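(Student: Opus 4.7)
The plan is to construct $\sigma$ explicitly from the data $(r,s)$. Observe first that the two conditions force each pair $(\sigma(j-k), \sigma(j))$ for $j \in A_2$ to have its two entries in different blocks among $A_1, A_2, A_3$, and by Corollary \ref{C:CRCR} they cannot both lie in $A_3$; thus each such pair has unordered type in $\{\{A_1, A_2\}, \{A_1, A_3\}, \{A_2, A_3\}\}$, with exactly $s$ of type $\{A_1, A_3\}$. Counting preimages of each $A_i$ (using $|A_1| = k$, $|A_2| = \ell$, $|A_3| = m$) then forces the numbers of pairs of types $\{A_1, A_2\}$ and $\{A_2, A_3\}$ to be $k-r-s$ and $\ell-k+r$ respectively, both non-negative by the hypotheses $r+s \leq k$ and $r \geq k - \ell$; and forces the number of elements of $[\ell+1,k] \cup A_3$ mapping to $A_3$ to be $k-\ell+m-r-s \geq 0$ by the hypothesis $r+s \leq k-\ell+m$.

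To realize this data, I would define $\sigma$ piecewise as follows. Set $\sigma$ to be the identity on $[1, k-r-s] \cup [k+1, 2k-r-s]$, producing $k-r-s$ pairs of type $\{A_1, A_2\}$. For $i \in [k-r-s+1, k-r]$, leave $\sigma(i) = i \in A_1$ and send $k+i \in [2k-r-s+1, 2k-r] \subseteq A_2$ bijectively onto $[k+\ell+1, k+\ell+s] \subseteq A_3$, producing the $s$ pairs of type $\{A_1, A_3\}$. For $i \in [k-r+1, \ell]$, set $\sigma(i) = i+k \in A_2$ and send $k+i$ bijectively onto $[k+\ell+s+1, 2\ell+s+r] \subseteq A_3$, producing the $\ell-k+r$ pairs of type $\{A_2, A_3\}$; here the inequality $r+s \leq k-\ell+m$ is exactly what keeps the image inside $A_3$. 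Finally, define $\sigma$ on the remaining domain $[\ell+1, k] \cup A_3$ to be any bijection onto the leftover codomain $[k-r+1, k] \cup [2k-r-s+1, 2k-r] \cup [2\ell+s+r+1, k+\ell+m]$, whose three pieces have the matching cardinalities $r$, $s$, and $k-\ell+m-r-s$ in $A_1$, $A_2$, and $A_3$ respectively.

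Verification is then routine bookkeeping: one checks that the pieces assemble into a genuine bijection on $[1, k+\ell+m]$, and that, by construction, for each $j \in A_2$ the entries $\sigma(j-k)$ and $\sigma(j)$ lie in two distinct blocks and are never both in $A_3$ (hence $\sigma \in R_{\s\times\t}C_{\s}R_{\add{\s}{\t}}C_{\s}$ by Corollary \ref{C:CRCR} and $(\sigma(j-k), \sigma(j)) \notin R_{\s\times\t}$). One then reads off directly from the case analysis that the two count statistics are the prescribed $r$ and $s$. The only non-trivial aspect is purely organizational, namely ensuring that each of the three inequality hypotheses on $(r,s)$ is used at the correct endpoint to keep all interval lengths non-negative; no deeper ideas are required beyond the initial observation that the data naturally decompose into three types of pairs.
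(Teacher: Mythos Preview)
Your construction is correct: the piecewise definition assembles into a bijection, each pair $(\sigma(j-k),\sigma(j))$ lands in two distinct blocks (so the transposition is never in $R_{\s\times\t}$ and never has both entries in $A_3$, whence $\sigma\in R_{\s\times\t}C_{\s}R_{\add{\s}{\t}}C_{\s}$ by Corollary~\ref{C:CRCR}), and the two counts come out to $r$ and $s$ as you say. The preliminary counting argument, while not logically necessary, is a nice explanation of why the three pair-types must occur with multiplicities $k-r-s$, $s$, and $\ell-k+r$.

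The paper takes a slightly slicker route: rather than building $\sigma$ piecewise and then invoking Corollary~\ref{C:CRCR} to certify membership, it simply writes down
\[
\sigma=\kappa\rho',\qquad \kappa=\prod_{j=1}^{s}(j,j+k)\in C_{\s},\quad \rho'=\prod_{j=1}^{r+s-k+\ell}(j,k+\ell+j)\in R_{\add{\s}{\t}},
\]
so that $\sigma\in C_{\s}R_{\add{\s}{\t}}\subseteq R_{\s\times\t}C_{\s}R_{\add{\s}{\t}}C_{\s}$ is immediate from the factorisation, with no appeal to the corollary. The resulting permutation is essentially the same as yours (same pair-type profile), but the algebraic presentation makes the verification a one-liner. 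Your approach has the advantage of being self-motivating---the reader sees where the numbers come from---whereas the paper's formula looks pulled from a hat but is quicker to check.
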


\begin{proof}
Let $\rho' = \prod_{j=1}^{r+s-k+\ell} (j, k+\ell+j) \in R_{\add{\s}{\t}}$ and $\kappa = \prod_{j=1}^s (j, j+k) \in C_{\s}$.  Then for $\sigma = \kappa\rho'$, we have
\begin{align*}
\{ i\in [\ell+1,k] \cup A_3 \mid \sigma(i) \in A_1 \} &= [\ell+1,k] \cup [k+\ell+s+1,2\ell+s+r];\\
\{ j \in A_2 \mid \sigma(j), \sigma(j-k) \notin A_2 \} &= [k+1,k+s].
\end{align*}
\end{proof}

We need the following number theoretic result to express $\theta_{(k,\ell),(m)}$ in a nice closed form.

\begin{lem} \label{L:lcm}
Let $a,b \in \mathbb{Z}_{\geq 0}$. 
\begin{enumerate}

\item We have $$\tfrac{(a+b+1)!}{a!b!} \mid \lcm_{\ZZ}([a+1,a+b+1]).$$

\item The following two subsets of positive integers have the same least common multiple:
\begin{gather*}
\left\{ \tfrac{(r+s+1)!}{r!s!} \mid r, s \in \mathbb{Z}_{\geq 0},\ r \geq a,\ r+s \leq a+b \right\}\\
[a+1,a+b+1].
\end{gather*}
\end{enumerate}
\end{lem}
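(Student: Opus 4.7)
The plan is to deduce (1) from the classical partial-fraction identity
\[
\sum_{k=0}^{b} (-1)^k \binom{b}{k} \frac{1}{a+k+1} \;=\; \frac{a!\,b!}{(a+b+1)!},
\]
which can be derived either from the beta integral $\int_0^1 x^a(1-x)^b\,dx = B(a+1,b+1)$, or proved by induction on $b$ via Pascal's rule. Setting $L := \lcm_{\ZZ}([a+1,a+b+1])$ and multiplying both sides by $L$, the left-hand side becomes $\sum_{k=0}^{b}(-1)^k\binom{b}{k}\,L/(a+k+1)$, which is an integer since each $L/(a+k+1)$ is; the right-hand side equals $L\cdot a!\,b!/(a+b+1)!$. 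Hence $(a+b+1)!$ divides $L\cdot a!\,b!$, which is exactly the claim $(a+b+1)!/(a!\,b!) \mid L$.

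For (2), write $S$ for the first of the two displayed sets, and $L$ as above. I would prove that $\lcm_{\ZZ}(S)$ and $L$ divide each other. For $\lcm_{\ZZ}(S) \mid L$, note that any admissible pair $(r,s)$ (meaning $r\geq a$ and $r+s\leq a+b$) satisfies $[r+1,r+s+1]\subseteq[a+1,a+b+1]$, so applying part~(1) with $(r,s)$ in place of $(a,b)$ yields
\[
\frac{(r+s+1)!}{r!\,s!} \;\Big|\; \lcm\nolimits_{\ZZ}([r+1,r+s+1]) \;\Big|\; \lcm\nolimits_{\ZZ}([a+1,a+b+1]).
\]
Conversely, for each $i\in[a+1,a+b+1]$ the pair $(r,s)=(i-1,0)$ is admissible (since $i-1\geq a$ and $(i-1)+0\leq a+b$), and produces $(r+s+1)!/(r!\,s!) = i$; hence $i\in S$, so $i\mid \lcm_{\ZZ}(S)$. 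Letting $i$ range over $[a+1,a+b+1]$ yields $L\mid \lcm_{\ZZ}(S)$.

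The only substantive ingredient is the partial-fraction identity used in~(1); once that is in hand, (2) reduces to checking a containment of index ranges and the trivial observation that every single integer $i$ in $[a+1,a+b+1]$ is itself produced as $(r+s+1)!/(r!\,s!)$ for the admissible choice $(r,s)=(i-1,0)$. I do not anticipate any real obstacle here beyond recalling/verifying the partial-fraction identity.
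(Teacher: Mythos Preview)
Your proof is correct, and in both parts you take a genuinely different (and cleaner) route than the paper.

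For part~(1), the paper argues prime by prime via Kummer's theorem: writing out the $p$-adic expansions of $a+1$ and $b$, it bounds $v_p\bigl(\tfrac{(a+b+1)!}{a!b!}\bigr) = v_p(a+1) + v_p\bigl(\tbinom{a+b+1}{a+1}\bigr)$ by exhibiting a specific element of $[a+1,a+b+1]$ with large enough $p$-adic valuation. Your beta-integral / partial-fraction identity gives the divisibility in one stroke, without any prime-by-prime analysis; it is both shorter and more conceptual.

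For part~(2), the containment $[a+1,a+b+1]\subseteq S$ via $(r,s)=(i-1,0)$ is common to both arguments. In the other direction, the paper proceeds by induction on~$b$, splitting $S_b = S_{b-1} \cup \{\tfrac{(a+b+1)!}{(a+b-s)!\,s!} : 0\leq s\leq b\}$ and invoking the inductive hypothesis together with part~(1). You instead observe directly that for every admissible $(r,s)$ the interval $[r+1,r+s+1]$ lies inside $[a+1,a+b+1]$ and apply part~(1) once; this avoids the induction entirely. The paper's approach is perhaps more mechanical, while yours makes transparent why the single inequality $r\geq a,\ r+s\leq a+b$ exactly forces the interval containment needed.
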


\begin{proof} \hfill
\begin{enumerate}
\item It suffices to show that $v_p(\tfrac{(a+b+1)!}{a!b!}) \leq v_p(\lcm_{\ZZ}([a+1,a+b+1]))$ for any prime integer $p$. We have $v_p(\tfrac{(a+b+1)!}{a!b!}) = v_p((a+1) \binom{a+b+1}{a+1}) = v_p(a+1) + v_p(\binom{a+b+1}{a+1})$.
    Let
$$
a+1 = \sum_{i=0}^{\infty} \alpha_i p^i \quad \text{and} \quad  b = \sum_{i=0}^{\infty} \beta_i p^i
$$ be the $p$-adic decompositions of $a+1$ and $b$. Let $$I = \{ i \in \ZZ_{\geq 0} \mid \sum_{j=0}^i (\alpha_j + \beta_j) p^j \geq p^{i+1} \}.$$
By a result of Kummer \cite[p.116]{Kummer}, we have $v_p(\binom{a+b+1}{a+1} )= |I|$.

If $I = \varnothing$, then
$$v_p(\tfrac{(a+b+1)!}{a!b!}) = v_p(a+1) \leq v_p(\lcm_{\ZZ}([a+1,a+b+1])).$$
Assume thus $I\ne \varnothing$, and let $i_0 = \min(I)$ and $i_1 = \max(I)$.  Since  $\alpha_i = 0$ for all $j < v_p(a+1)$, we have $\sum_{j=0}^i (\alpha_j + \beta_j) p^j = \sum_{j=0}^i  \beta_j p^j < p^{i+1}$ for all $i < v_p(a+1)$, so that $i_0 \geq v_p(a+1)$.  Let $c = p^{i_1+1} + \sum_{i = i_1+1}^{\infty} \alpha_i p^i$.  Then $c -(a+1) = p - \sum_{i =0}^{i_1} \alpha_i p^i >0$, while
$$
(a+b+1) - c = \sum_{i=i_1+1}^{\infty} \beta_i + \sum_{j=0}^{i_1} (\alpha_j+\beta_j)p^j - p^{i_1+1} \geq 0$$
since $i_1 \in I$, so that $c \in [a+1,a+b+1]$.  Thus,
\begin{align*}
v_p(\tfrac{(a+b+1)!}{a!b!}) = v_p(a+1) + v_p(\tbinom{a+b+1}{a+1})
&\leq i_0 + |I| \\ &\leq i_1 + 1  \leq v_p(c) \leq v_p(\lcm_{\ZZ}([a+1,a+b+1]),
\end{align*}
and we are done.

\item 

    Fix $a\in\ZZ_{\geq 0}$. Let $S_b=\left\{ \tfrac{(r+s+1)!}{r!s!} \mid r, s \in \mathbb{Z}_{\geq 0},\ r \geq a,\ r+s \leq a+b \right\}$, and $\ell_b = \lcm_{\mathbb{Z}} (S_b)$. We prove by induction on $b\in\ZZ_{\geq 0}$, with the base case of $b=0$ being trivial. Assume thus that $b>0$ and $\ell_{b-1} = \lcm_{\ZZ}([a+1,a+b])$. When $s = 0$ we have $\frac{(r+s+1)!}{r!s!} = r+1$.  Thus $[a+1,a+b+1] \subseteq S_b$, so that $$\lcm_{\mathbb{Z}} ([a+1,a+b+1]) \mid \ell_b.$$
    Conversely, as $S_b =  S_{b-1} \cup \{ \frac{(a+b+1)!}{(a+b-s)!s!} \mid s \in [0,b] \}$, we have, for any $x \in S_{b-1}$, $x \mid \ell_{b-1}  = \lcm_{\ZZ}([a+1,a+b]) \mid \lcm_{\ZZ}([a+1,a+b+1])$ by induction, while $$\tfrac{(a+b+1)!}{(a+b-s)!s!} \mid \lcm_{\ZZ}([a+b-s+1,a+b+1]) \mid \lcm_{\ZZ}([a+1,a+b+1])$$ for all $s \in [0,b]$ by part (1), and the proof is complete.
\end{enumerate}
\end{proof}

\begin{thm} \label{T:main}
We have
\begin{align*}
\theta_{(k,\ell),(m)} = \frac{(k+1)!\, \ell!\, m!}{\lcm_{\mathbb{Z}}([k-\ell+1, k-\ell+1+\min(\ell,m)])}.
\end{align*}
\end{thm}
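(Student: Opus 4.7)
My plan is to combine the structural results of the preceding subsection with an lcm/gcd duality argument. By Lemma \ref{L:a_sigma=0}, the coefficient $a_\sigma$ in $Y_{\IT^\lambda}Y_{(\IT^\mu)^{+(k+\ell)}}Y_{\add{\IT^\lambda}{\IT^\mu}}$ vanishes unless $(\sigma(j-k),\sigma(j)) \notin R_{\s\times\t}$ for every $j\in A_2$, and for each such $\sigma$ Proposition \ref{P:a_sigma} gives the exact value $a_\sigma = \varepsilon_\sigma \frac{\ell!\,m!\,(k+1)!\,r!\,s!}{(r+s+1)!}$ with $r,s \geq 0$, $r \geq k-\ell$, and $r+s \leq \min(k,k-\ell+m)$. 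Lemma \ref{L:eg*} shows that every admissible pair $(r,s)$ is actually realized by some such $\sigma$, so
\[
\theta_{(k,\ell),(m)} \;=\; \ell!\,m!\,\cdot\,\gcd\left\{\tfrac{(k+1)!\,r!\,s!}{(r+s+1)!} : r \geq k-\ell,\ r+s \leq \min(k,k-\ell+m)\right\}.
\]

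The next step is to convert this gcd into an lcm. First I would observe that for every admissible pair the denominator ratio $\tfrac{(r+s+1)!}{r!\,s!}$ is an integer dividing $(k+1)!$; this follows from the identity $\tfrac{(k+1)!}{(r+s+1)!(k-r-s)!} = \binom{k+1}{r+s+1}$, which is available since $r+s \leq k$. With this divisibility in hand, the elementary identity $\gcd\{M/N_i\} = M/\lcm\{N_i\}$ (valid whenever every $N_i$ divides $M$) gives
\[
\gcd\left\{\tfrac{(k+1)!\,r!\,s!}{(r+s+1)!}\right\} \;=\; \frac{(k+1)!}{\lcm\left\{\tfrac{(r+s+1)!}{r!\,s!} : r \geq k-\ell,\ r+s \leq \min(k,k-\ell+m)\right\}}.
\]

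Finally, I would apply Lemma \ref{L:lcm}(2) with $a = k-\ell$ and $b = \min(\ell,m)$ (noting that $a+b = \min(k, k-\ell+m)$) to identify the denominator on the right with $\lcm_\ZZ([k-\ell+1,\, k-\ell+1+\min(\ell,m)])$. Putting the pieces together yields the desired closed formula. The main obstacle, which is essentially already resolved in the preparatory material, is the number-theoretic Lemma \ref{L:lcm}(2) equating the lcm of the ratios $\tfrac{(r+s+1)!}{r!\,s!}$ with the lcm of a consecutive integer interval; once that is granted, the proof is a routine assembly of the preceding propositions and the gcd/lcm duality.
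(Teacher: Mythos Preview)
Your proposal is correct and follows essentially the same route as the paper: assemble Lemma \ref{L:a_sigma=0}, Proposition \ref{P:a_sigma}, and Lemma \ref{L:eg*} to identify the set of nonzero coefficients, convert the resulting $\gcd$ to an $\lcm$, and invoke Lemma \ref{L:lcm}(2) with $a=k-\ell$, $b=\min(\ell,m)$. The only difference is cosmetic: you make the gcd/lcm duality step explicit by observing that each $\tfrac{(r+s+1)!}{r!s!}$ divides $(k+1)!$, whereas the paper passes over this without comment.
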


\begin{proof}
By Lemmas \ref{L:a_sigma=0} and \ref{L:eg*}, and Proposition \ref{P:a_sigma}, we have
\begin{align*}
\theta_{(k,\ell),(m)} &= \gcd\nolimits_{\mathbb{Z}}\{ \tfrac{\ell! m! (k+1)!r!s!}{(r+s+1)!} \mid r,s \in \mathbb{Z}_{\geq 0},\ r \geq k-\ell,\ r+s \leq \min(k, k-\ell+m) \} \\
&= \frac{\ell!m!(k+1)!}{\lcm_{\mathbb{Z}} \{\frac{(r+s+1)!}{r!s!} \mid r,s \in \mathbb{Z}_{\geq 0},\ r \geq k-\ell,\ r+s \leq k-\ell+\min(\ell,m) \}} \\
&= \frac{\ell!m!(k+1)!}{\lcm_{\mathbb{Z}}([k-\ell+1,k-\ell+1 +\min(\ell,m)])},
\end{align*}
where the last equality follows from Lemma \ref{L:lcm}(3).
\end{proof}

\begin{rem} \label{rem:final-rem} \hfill
\begin{enumerate}
\item
Theorem \ref{T:main} in particular provides an alterative proof that statements (i) and (iii) of Proposition \ref{prop:Donkin-criterion} are equivalent, which we now demonstrate.
Putting $\ell=0$ in Theorem \ref{T:main}, we obtain $\theta_{(k),(m)} = k!m!$, and so by Theorem \ref{thm:split-condition-main}, the canonical $\mathrm{GL}_N(\FF)$-morphism $\iota_{(k),(m)}$ splits over $\ZP$ if and only if $p \nmid \frac{\phl^{(k+m)}}{k!m!} = \frac{(k+m)!}{k!m!} = \binom{k+m}{m}$ when $N\geq k+m$.
\newcommand{\PR}[3]{M_{#1}({#2},{#3})}
When $N<k+m$, let $\PR{\ZP}{N}{k+m}$ (respectively, $\PR{\ZP}{k+m}{k+m}$) denote the category of homomogeneous polynomial $\mathrm{GL}_N(\ZP)$-modules (respectively, $\mathrm{GL}_{k+m}(\ZP)$-modules) of degree $k+m$, and consider the `truncation' functor (see, for example, \cite[\S6.5]{Green80})
\[
f: \PR{\ZP}{k+m}{k+m} \to \PR{\ZP}{N}{k+m}.
\]
Let $E$ and $\hat{E}$ be the natural $\mathrm{GL}_N(\ZP)$- and $\mathrm{GL}_{k+m}(\ZP)$-modules respectively. The Weyl module $\Delta_{\ZP}(r)$, for $\mathrm{GL}_N(\ZP)$ and $\mathrm{GL}_{k+m}(\ZP)$, is isomorphic to the divided power $D^r E$ and $D^r \hat{E}$ respectively, for all $r\geq 1$. 
Since $k+m\geq 2$, both $D^k \hat{E}\otimes D^m\hat{E}$ and $D^{k+m}\hat{E}$ are projective modules in $\PR{\ZP}{k+m}{k+m}$, while $f(D^k\hat{E}\otimes D^m\hat{E}) = D^kE\otimes D^m E$ and  $f(D^{k+m}\hat{E}) = D^{k+m}E$ are projective modules in $\PR{\ZP}{N}{k+m}$. It follows that $f$ induces the natural isomorphism
\begin{align*}
&\Hom_{\PR{\ZP}{k+m}{k+m}} (D^k\hat{E}\otimes D^m \hat{E},D^{k+m}\hat{E})\\
\cong\ & \Hom_{\PR{\ZP}{N}{k+m}} (f(D^k\hat{E}\otimes D^m \hat{E}),f(D^{k+m}\hat{E})) \\
=\ &\Hom_{\PR{\ZP}{N}{k+m}}(D^k E\otimes D^mE,D^{k+m}E).
\end{align*}
Thus the canonical morphism $D^{k+m}E \to D^kE\otimes D^mE$ splits over $\ZP$ if and only if $D^{k+m}\hat{E} \to D^k\hat{E}\otimes D^m\hat{E}$ splits over $\ZP$. As a result, $\iota_{(k),(m)}$ splits over $\ZP$ if and only if $p\nmid \binom{k+m}{k}$, irrespective of the value of $N$.

\item
Since $\phl^\nu$ is the product of all hook lengths in the Young diagram $[\nu]$, Corollary \ref{cor:theta-gcd} may suggest to some readers that $\theta_{\lambda,\mu}$ could be a product of some subset of hook lengths in $[\lambda+\mu]$ or in $[\lambda]\cup[\mu]$, and that furthermore it might be possible to describe such a subset combinatorially. The appearance of the least (positive) common multiple of an integer interval in Theorem \ref{T:main} indicates that if this is indeed true, the description of such a subset may be rather complicated.

\item

After this paper has been submitted, the authors study Young's seminormal basis vectors and their denominators, and are able to provide closed formula for $\dd_{\add{\IT^\lambda}{\IT^\mu}}$ directly when $\mu$ is a one-row partition and $\lambda$ is either a two-row partition or a hook partition.  This provides an alternative way of computing $\theta_{\lambda,\mu}$ via Theorem \ref{thm:denominator} for the examples listed in this section.  We refer the interested reader to our forthcoming paper \cite{FLT2}.
\end{enumerate}
\end{rem}

\end{document}